\newcommand{\R}{\mathbb{R}}
\newcommand{\T}{\mathbb{T}}
\newcommand{\Z}{\mathbb{Z}}
\newcommand{\C}{\mathbb{C}}
\newcommand{\Id}{\textup{Id}}
\newcommand{\A}{\mathcal{A}}
\newcommand{\dV}{\,d\mathcal{V}}
\newcommand{\wt}[1]{\widetilde{#1}}
\newcommand{\grad}{\textup{grad}\,}
\renewcommand{\del}{\partial}
\newcommand{\M}{\mathcal{M}}
\newcommand{\N}{\mathbb{N}}
\renewcommand{\L}{\mathcal{L}}
\newcommand{\loc}{\textit{loc}}
\newcommand{\F}{\mathcal{F}}
\newcommand{\delslash}{\cancel{\del}}
\let\pp\S
\renewcommand{\S}{\mathcal{S}}
\newcommand{\X}{\mathcal{X}}
\newcommand{\V}{\mathcal{V}}
\newcommand{\W}{\mathcal{W}}
\newtheorem{theorem}{Theorem}
\newtheorem{definition}[theorem]{Definition}
\newtheorem{lemma}[theorem]{Lemma}
\newtheorem{corollary}[theorem]{Corollary}
\newtheorem{remark}[theorem]{Remark}
\newtheorem{proposition}[theorem]{Proposition}
\newtheorem{example}[theorem]{Example}
\tikzset{every picture/.style={line width=0.75pt}} %set default line width to 0.75pt       
\numberwithin{theorem}{section}
\title{Generalizing symplectic topology from 1 to 2 dimensions}
\author{Ronen Brilleslijper and Oliver Fabert}
\date{}
\begin{document}

\maketitle

% \begin{abstract}
%     In symplectic topology one uses elliptic methods to prove rigidity results about symplectic manifolds and solutions of Hamiltonian equations on them, where the most basic example is given by geodesics on Riemannian manifolds. Harmonic maps are the natural 2-dimensional generalizations of geodesics. In this paper we give the corresponding generalizations of symplectic manifolds and Hamiltonian equations, leading to a class of partial differential equations that share properties similar to Hamiltonian (ordinary) differential equations. While we proved a version of Arnolds conjecture in our last paper, we establish an analogue of Gromov’s non-squeezing theorem, and show that the holomorphic Fukaya category of Kontsevich-Soibelman is contained in our geometric framework.
% \end{abstract}

\begin{abstract}
    In symplectic topology one uses elliptic methods to prove rigidity results about symplectic manifolds and solutions of Hamiltonian equations on them, where the most basic example is given by geodesics on Riemannian manifolds. Harmonic maps from surfaces are the natural 2-dimensional generalizations of geodesics. In this paper, we give the corresponding generalization of symplectic manifolds and Hamiltonian equations, leading to a class of partial differential equations that share properties similar to Hamiltonian (ordinary) differential equations. Two rigidity results are discussed: a non-squeezing theorem and a version of the cuplength result for quadratic Hamiltonians on cotangent bundles. The proof of the latter uses a generalization of Floer curves, for which the necessary Fredholm and compactness results will be proven.
\end{abstract}

\tableofcontents

\section*{Introduction}
In topology, one uses Morse theory to prove lower bounds for the number of $0$-dimensional objects, namely critical points of smooth functions on smooth manifolds. In symplectic topology, one uses Floer theory, an infinite-dimensional generalization of Morse theory, to prove lower bounds for the number of $1$-dimensional objects, like time-periodic orbits of (Hamiltonian) functions on symplectic manifolds. Despite the infinite-dimensionality one finds that Floer theory for Hamiltonian orbits has properties very similar to Morse theory for critical points: in the case of closed symplectic manifolds the lower bound is given by the sum of the Betti numbers. This naturally leads to the question whether the results of (symplectic) topology can be generalized to prove topological bounds for the number of $2$-dimensional or even higher dimensional objects in a finite-dimensional manifold.

We will give an affirmative answer for the case of $2$ dimensions. There are two natural candidates for the generalization of symplectic geometry from $1$ to $2$ dimensions: holomorphic symplectic geometry and polysymplectic (or multi-/$2$-symplectic) geometry.

\emph{Holomorphic symplectic geometry} is a complex version of (real) symplectic geometry in the sense that one considers a complex manifold with a holomorphic symplectic $2$-form; in the similar way one considers complex-valued Hamiltonian functions which are assumed to be holomorphic. It is already known, see e.g. the work of Doan-Rezchikov and Kontsevich-Soibelman on holomorphic Floer theory, that the elliptic methods for proving rigidity results in symplectic geometry have a natural analogue in holomorphic symplectic geometry: the role of $2$-dimensional holomorphic curves in symplectic geometry is taken over by $3$-dimensional Fueter maps in holomorphic symplectic geometry (see \cite{doan2022holomorphic,kontsevich2024holomorphic}). However, it is not possible to formulate a non-linear Laplace equation, the natural higher-dimensional generalization of Newton’s equation, as a holomorphic Hamiltonian system. In particular, while geodesics are the basic examples of solutions of Hamilton’s equations in the real case, there is no holomorphic Hamiltonian system whose solution leads to harmonic maps, the natural $2$-dimensional generalization of geodesics. Moreover, there is no canonical real-valued action functional defined, but rather choices have to be made in order to find an action functional whose gradient lines may be studied.

\emph{Polysymplectic geometry} (and its multisymplectic and $k$-symplectic variants) is a geometric framework that has been developed to generalize symplectic geometry from classical mechanics to classical field theory, that is, to the case where the time coordinate is replaced by multiple (space-)time coordinates. In contrast to holomorphic symplectic geometry, it provides a framework in which non-linear Laplace equations (in particular harmonic maps) can be studied as generalized Hamiltonian systems. Here we would like to stress that the underlying Hamiltonian function is always real-valued. The central geometric object is the polysymplectic form, which is an $\R^2$-valued real $2$-form. Surprising at first glance, there does not exist anything like polysymplectic topology; in particular, there are no analogues of Gromov’s non-squeezing theorem as well as of Floer’s proof of the Arnold conjecture. Indeed, as we have already discussed in our previous papers \cite{regularizedpolysympl,hyperkahlerrookworst}, it is not possible to generalize the elliptic methods of symplectic geometry to the standard polysymplectic setting: The corresponding generalization of Hamilton’s equations, called De Donder-Weyl equations, are not elliptic due to the presence of an infinite-dimensional kernel.

In this paper we discuss a geometric framework, that we call \emph{complex-regularized polysymplectic geometry}, which is designed in order to combine the benefits of both candidates. It is possible to formulate non-linear Laplace equations, in particular harmonic maps, as Hamiltonian systems in our framework just like in polysymplectic geometry. At the same time, the elliptic methods of symplectic geometry generalize to our framework along the lines of holomorphic symplectic geometry. While we give an intrinsic definition, we prove an analogue of the Darboux theorem stating that our complex-regularized polysymplectic manifolds are locally standard. Since the polysymplectic structure builds on the holomorphic symplectic one, we can show that the holomorphic Hamiltonian functions and holomorphic Lagrangians of holomorphic symplectic geometry become part of our polysymplectic framework alongside the real Hamiltonian functions used to describe non-linear Laplace equations and harmonic maps. However, it turns out that this framework still shares rigidity properties similar to symplectic geometry. We discuss a non-squeezing result as well as a version of the cuplength result for quadratic Hamiltonians, which in particular provides a lower bound to the number of harmonic maps. Since it is apparent that both the polysymplectic as well as the holomorphic symplectic picture simultaneously play a central role in the rigidity result, we hope that the reader is convinced that our new geometric framework is the natural candidate for the generalization of symplectic geometry from $1$ to $2$ dimensions. At the end of \cref{part:formalism} of the paper, we give examples of problems from both holomorphic symplectic as well as polysymplectic geometry that naturally lead to questions in the other. Both problems can be studied within our complex-regularized polysymplectic framework.

\subsection*{Overview of the article}
This articles contains both an expository part to introduce the framework in which we work, and a technical part providing proofs of the rigidity results. We start by introducing polysymplectic geometry, with the non-linear Laplace equation as the main example of a Hamiltonian system. In \cref{sec:complexBridges} we discuss a different Hamiltonian formulation of the Laplace equation using Dirac operators, leading to the definition of complex-regularized polysymplectic manifolds. Consequently, the correspondence with holomorphic symplectic geometry is explained in \cref{sec:holomsympl} and we find a Darboux theorem for complex-regularized polysymplectic manifolds. \Cref{sec:Lagrange} describes the Lagrange formalism corresponding to this new Hamiltonian framework. To finish the expository part of the paper, \cref{sec:applications} describes two problems that highlight the interplay between holomorphic symplectic and polysymplectic geometry. The first problem is obstruction theory to holomorphic Lagrangian embeddings, a problem from holomorphic symplectic geometry, which may be answered using the study of harmonic maps. The second one is the existence of harmonic maps with specified boundary. This problem in polysymplectic geometry may be formulated as a Hamiltonian PDE, whose boundary condition is given by a complex Lagrangian in a holomorphic symplectic manifold. Both problems discussed can be studied within the complex-regularized polysymplectic framework.

Subsequently, we turn to the rigidity results. In \cref{sec:nonsqueezing}, a non-squeezing result is discussed which in particular implies that morphisms between complex-regularized polysymplectic manifolds are stable under taking $C^0$-limits. \Cref{sec:actionfunctional} discusses the action functional and formulates a version of the cuplength result for quadratic Hamiltonians, which is proven in \cref{sec:proofArnold,sec:Fredholm,sec:Compactness} by analysing a moduli space of Floer maps. While \cref{sec:proofArnold} proves the necessary $C^0$-bounds and discusses the general proof strategy, \cref{sec:Fredholm,sec:Compactness} cover the Fredholm and compactness theories respectively. 

\textbf{Acknowledgment.} The authors would like to thank Gabriele Benedetti, Luca Asselle and Alberto Abbondandolo for helpful discussions.

\part{Introducing the formalism}\label{part:formalism}
\section{Polysymplectic geometry}\label{sec:DDW}
In its most basic form, symplectic geometry studies maps $q:M\to\R^n$, where $M$ is either $\R$ or $S^1$, satisfying Newton's equation
\begin{align}\label{eq:Newton}
    -\del_t^2 q = \frac{dV}{dq},
\end{align}
for some potential $V:\R^n\to\R$. The Hamiltonian form of this equation can be described using a non-degenerate 2-form, called the symplectic form, that maps Hamiltonians on phase space $\R^{2n}$ to vector fields. The flow of these Hamiltonian vector fields then yield  first-order ODEs that generalize \cref{eq:Newton}. For example, when the Hamiltonian is chosen to be purely quadratic in the momentum variable, the flow of the Hamiltonian vector field describes the geodesic equation. Since all the geometric information is stored in the symplectic form, this Hamiltonian formalism can be extended to any manifold admitting a non-degenerate 2-form. 

As mentioned in the introduction, the polysymplectic formalism, in its most basic form, studies the non-linear Laplace equation
\begin{align}\label{eq:Laplace}
    -\Delta q := -\sum_{l=1}^d \del_{t_l}^2 q = \frac{dV}{dq},
\end{align}
where in this case $M=\R^d$. When $M$ is a surface, one would hope that the corresponding Hamiltonian formalism would be capable of describing harmonic maps, just like for $d=1$ it can describe the geodesic equation.

From here on out $d=2$. The standard way of transforming \cref{eq:Laplace} into a first-order system is by introducing  momentum variables for the different 'time'-directions. This yields the so-called De Donder-Weyl equations
\begin{align}\begin{split}\label{eq:DDW}
    -\del_1 p_1 - \del_2 p_2 &= \del_q H\\
    \del_1 q &= \del_{p_1}H \\
    \del_2 q &= \del_{p_2} H,
    \end{split}
\end{align}
where $\del_l:=\del_{t_l}$. Note that for $H(q,p_1,p_2)=\frac{1}{2}p_1^2+\frac{1}{2}p_2^2 +V(q)$, indeed \cref{eq:DDW} reduces to the nonlinear Laplace \cref{eq:Laplace}. We will denote by $Z$ the map $Z=(q,p_1,p_2):\R^2\to \R^3$.

The De Donder-Weyl equations form the basis of the different extensions of symplectic geometry to formalisms that can deal with PDEs. However, for the purpose of extending elliptic methods, they are unsuitable. The most apparent problem is that even though the Laplace equation is elliptic, \cref{eq:DDW} is not. As pointed out in previous articles (e.g. \cite{bridgesTEA}), the differential operator that defines \cref{eq:DDW} has an infinite-dimensional kernel. This can be easily seen as follows. Take $q$ to be a constant and $p_1=\del_2\psi$, $p_2=-\del_1\psi$ for any function $\psi:\R^2\to\R$. Then $-\del_1p_1-\del_2p_2=0$ and also $\del_1q=\del_2q=0$, so that indeed $(q,p_1,p_2)$ satisfies \cref{eq:DDW} for $H\equiv0$. This fact makes the extension of elliptic methods to the polysymplectic setting impossible. We refer to our previous articles \cite{regularizedpolysympl,hyperkahlerrookworst} for more explanation on why this forms a problem in the specific context of Floer theory. Fortunately, it is possible to formulate a different Hamiltonian version of the Laplace \cref{eq:Laplace} using Dirac operators. In order to do so, we need to have complex structures on the spaces we are working with. These equations will lead to the notion of complex-regularized polysymplectic geometry. Before diving into this formalism in \cref{sec:complexBridges}, we discuss the geometric picture behind \cref{eq:DDW}.

\begin{remark}
    In this article we will focus on the case where the domain $M$ has a trivial tangent bundle, meaning $M=\R^2$ or $M=\T^2$. If $M$ is any other surface, the polysymplectic formalism doesn't suffice and we have to use the more complicated multisymplectic formalism. Since the polysymplectic formalism already contains the essence of our work and it clearly shows the relation to holomorphic symplectic geometry, we have decided to stick with this formalism for this article. The multisymplectic generalization is ongoing work of the authors, see also \cref{rem:multi}.
\end{remark}

%Let us start with a definition (\cite{gunther1987polysymplectic}).
\begin{definition}[\cite{gunther1987polysymplectic}]\label{def:polysymplectic}
    An $\R^2$-valued 2-form $\Omega\in\bigwedge^2T^*W\otimes\R^2$ on a manifold $W$ is called \emph{polysymplectic} if it is closed and non-degenerate in the following sense:
    \[
    \Omega(X,\cdot)=0 \Rightarrow X=0\text{ for all }X\in TW.
    \]
\end{definition}
Note that $\Omega$ is given by a pair of closed 2-forms $\eta_1,\eta_2$ on $W$. The non-degeneracy condition is equivalent to the requirement that $\ker\eta_1^\flat \cap \ker\eta_2^\flat=\{0\}$.
For example, on $\R^3$ with coordinates $(q,p_1,p_2)$ we may define the $\R^2$-valued form
\[
\Omega_{DW} = \eta_1\otimes\del_1+\eta_2\otimes\del_2,
\]
where $\R^2$ is spanned by the basis vectors $\del_1$ and $\del_2$ and
\begin{align*}
    \eta_1 = dp_1\wedge dq && \eta_2 = dp_2\wedge dq.
\end{align*}
Note that neither $\eta_1$ nor $\eta_2$ is non-degenerate, but their kernels have trivial intersection.

Whereas on symplectic manifolds there is a unique notion of a Hamiltonian, on a polysymplectic manifold there are actually two types of Hamiltonians, both of whom serve a different purpose. On the one hand, in symplectic geometry, a Hamiltonian gives rise to a vector field whose flow describes a differential equation. Since a polysymplectic form is $\R^2$-valued, its pairing with a vector field gives an element of $\R^2$. Thus, only for functions $W\to \R^2$ there is a chance of relating them to the flow of a vector field. In the polysymplectic world, we call these functions currents. On the other hand, the polysymplectic language is actually designed in order to deal with PDEs, whereas a flow of a vector field gives an ODE. In order to describe PDEs, we need to study Hamiltonians $W\to \R$. The following definition describes both of these types of functions.
\begin{definition}\label{def:currentsandHamiltonians}
    Let $(W,\Omega)$ be a polysymplectic manifold. 
    \begin{enumerate}[label=(\roman*)]
        \item A function $F:W\to\R^2$ is called a \emph{current}, if there exists a vector field $X_F$ on $W$ such that $\Omega(X_F,\cdot)=dF$.
        \item A function $H:W\to \R$ is called a \emph{Hamiltonian}.
    \end{enumerate}
\end{definition}

As mentioned, any current $F$ induces a flow along the trajectories of its associated vector field $X_F$. This flow preserves the polysymplectic form. In order to relate a Hamiltonian $H$ to a PDE, note that for any vector $v\in TW$ and any map $Z:\R^2\to W$, the map $\Omega(v,dZ\cdot)$ is a linear map from $\R^2$ to itself. Therefore, we can take its trace and denote it by $\Omega^\#(dZ)(v)$. Note that when $\Omega=\eta_1\otimes\del_1+\eta_2\otimes\del_2$ it follows that $\Omega^\#(dZ) = \eta_1(\cdot, \del_1Z)+\eta_2(\cdot,\del_2Z)$. The map $Z:\R^2\to W$ is called a solution of the polysymplectic Hamiltonian equation if 
\begin{align}\label{eq:polyDDW}
    dH = \Omega^\#(dZ) = \eta_1(\cdot,\del_1Z) + \eta_2(\cdot,\del_2Z).
\end{align}
In the example of $W=\R^3$ and $\Omega=\Omega_{DW}$ \cref{eq:DDW} is equivalent to \cref{eq:polyDDW}.

For Hamiltonians $H$ and currents $F$, one can define their Poisson bracket as $\{H,F\}:=-dH(X_F)$. The flow of $X_F$ preserves $H$ if and only if $\{H,F\}=0$. In other words, currents describe symmetries of Hamiltonians. We will see in \cref{sec:holomsympl} that in the complex-regularized polysymplectic framework there is a remarkable connection between currents and holomorphic Hamiltonian systems. 

% \begin{remark}\textcolor{red}{Change}
%     The difference between the poly- and multisymplectic formalism lies in the viewpoint of $\Omega$ being an $\R^2$-valued 2-form on a manifold $W$ versus a 3-form on $M\times W$, where $M$ is either $\R^2$ or $\T^2$. These viewpoints are equivalent when a volume form on $M$ is fixed. However, the multisymplectic formalism actually extends to the case when $M$ does not have trivial tangent bundle and even to bundles over any surface $M$ instead of only trivial bundles $M\times W\to M$. We will start with the polysymplectic viewpoint, since  it is simpler and already highlights the essence of our formalism and the connection with complex symplectic geometry. In \cref{sec:bundles} we will generalize to the multisymplectic formalism in which we will prove a non-squeezing result analogous to Gromov's famous theorem in symplectic geometry.
% \end{remark}

\section{An alternative Hamiltonian formulation using Dirac operators}\label{sec:complexBridges}
%introduction of Bridges for d=2
Assume $M$ is either $\R^2$ or $\T^2$, so that $\{\del_1,\del_2\}$ forms a global frame of $TM$. As pointed out in \cref{sec:DDW}, we are looking for a Hamiltonian formulation of the nonlinear Laplace \cref{eq:Laplace}, that preserves its elliptic structure. Unfortunately, this is not possible for the Laplace equation on all spaces. However, when our space comes equipped with a complex structure, we may use it to define a regularized version of the polysymplectic from. 

To this extent, we now assume that $q:M\to\R^{2n}$ maps to even-dimensional Euclidean space equipped with its standard complex structure $i=\begin{pmatrix} 0 & -1\\ 1 & 0\end{pmatrix}$. We will write $q=(q_1,q_2):M\to\R^n\times\R^n$. The Laplace equation for $q$ can now be formulated as 
\begin{align}\label{eq:complexLaplace}
    -\Delta q = -4\del_{\bar{t}}\del_t q = \nabla V(q),
\end{align}
where $\del_t=\frac{1}{2}(\del_1-i\del_2)$ and $\del_{\bar{t}}=\frac{1}{2}(\del_1+i\del_2)$ are the holomorphic and anti-holomorphic derivatives and $V:\R^{2n}\to\R$.

To get to the De Donder-Weyl equations, we would introduce all four of the momentum vectors $p^\alpha_\beta=\del_\alpha q_\beta$, corresponding to the four different partial derivatives. However, we see from \cref{eq:complexLaplace} that the only combinations of partial derivatives we need, are given by the holomorphic derivative. Thus we may reduce the number of momentum variables to the number of coordinates $q$. Indeed, we define $p=2\del_t q$, meaning
\begin{align}\label{eq:pvariables}
\begin{split}
    p_1 &= \del_1q_1 +\del_2q_2\\
    p_2 &= \del_1q_2 - \del_2q_1,
    \end{split}
\end{align}
and get \cref{eq:complexLaplace} back by requiring $-2\del_{\bar{t}} p =\nabla V(q)$. That is
\begin{align}\label{eq:qvariables}\begin{split}
    \frac{\del V}{\del q_1} &= -\del_1p_1+\del_2p_2\\
    \frac{\del V}{\del q_2} &= -\del_1p_2 -\del_2p_1.
    \end{split}
\end{align}
Combined, we see that \cref{eq:complexLaplace} is equivalent to
\begin{align}\label{eq:complexBridges}
    \delslash Z := \begin{pmatrix} 0&-2\del_{\bar{t}}\\2\del_t&0\end{pmatrix}Z=\nabla H(Z),
\end{align}
where $Z=(q,p):M\to \R^{4n}$ and $H:\R^{4n}\to\R$ defined by $H(q,p)=\frac{1}{2}|p|^2+V(q)$.

In contrast to \cref{eq:DDW}, \cref{eq:complexBridges} is actually elliptic. One way to see that is by noting that $\delslash$ is a Dirac operator (clearly $\delslash^2=-\Delta$). Therefore, this Hamiltonian formulation is much more likely to be suitable for the generalization of elliptic methods from symplectic geometry. Indeed, in \cref{part:rigidity} of this paper, rigidity results will be proven based on these equations.

\begin{remark}
    We should note that equations (\ref{eq:pvariables}) and (\ref{eq:qvariables}) were already introduced by Bridges and Derks in \cite{bridgesderks}. In \cite{bridgesTEA}, Bridges views the variable that we call $q_2$ as a 'higher-momentum' variable. As was noted also by the authors of the present article in \cite{hyperkahlerrookworst}, there is a symmetry between the even and odd higher momenta, which leads to the viewpoint we chose for this article.
\end{remark}

Just like the De Donder-Weyl equations, \cref{eq:complexBridges} can be formulated in terms of a polysymplectic form. Define $\omega_1=dp_1\wedge dq_1+dp_2\wedge dq_2$ and $\omega_2=dp_1\wedge dq_2-dp_2\wedge dq_1$ and let 
\[
\Omega = \omega_1\otimes\del_1 + \omega_2\otimes\del_2.
\]
By a straightforward check, $\Omega$ defines a polysymplectic form on $\R^{4n}$ and \cref{eq:complexBridges} is equivalent to the polysymplectic equation
\begin{align*}
    dH=\Omega^\#(dZ) &=\omega_1(\cdot, \del_1Z)+\omega_2(\cdot,\del_2 Z).
\end{align*}
Note that in this case, not only $\Omega$ is non-degenerate, but so are its components $\omega_1$ and $\omega_2$. In fact, these components both define symplectic forms that are related by $\omega_2=-\omega_1(\cdot,I\cdot)$, where 
\begin{align*}
    I=\begin{pmatrix}
        i&0\\0&i^*
    \end{pmatrix}
    =\begin{pmatrix}
        i&0\\0&-i
    \end{pmatrix}
\end{align*}
is the standard complex structure on $\R^{4n}=\R^{2n}\times(\R^{2n})^*$.

\begin{remark}\label{rem:regpolvsholsym}
As a preparation for the next section in which we will explain the relation with holomorphic symplectic geometry, we want to note that $\Omega$ can be written also in a different way. Let $\omega^\C=d(p_1-ip_2)\wedge d(q_1+iq_2)$ be the holomorphic symplectic form\footnote{The minus sign for $p_2$ comes from the fact that $i^*=-i$.} on $\R^{2n}\times(\R^{2n})^*$ and $\bar{\omega}^\C$ its complex conjugate. Then we can rewrite 
\[\Omega=\omega^\C\otimes \del_t + \bar{\omega}^\C\otimes\del_{\bar{t}}.\]
This correspondence between the polysymplectic and holomorphic symplectic viewpoints holds in more generality and will be spelled out in \cref{prop:regpolvsholsym}. 
\end{remark}

The only geometric properties of $\R^{4n}$ that are essential for the definition of the regularized version of the De Donder-Weyl equations are its complex structure and the polysymplectic form $\Omega$. Thus we can extend to manifolds sharing these two structures.
\begin{definition}
%\begin{enumerate}[label=(\roman*)]
    %\item 
    A \emph{complex-regularized polysymplectic structure} on a complex manifold $(W,I)$ is an $\R^2$-valued polysymplectic form $\Omega=\omega_1\otimes\del_1+\omega_2\otimes \del_2$ such that $\omega_2=-\omega_1(\cdot,I\cdot)$. The triple $(W,I,\Omega)$ is called a \emph{complex-regularized polysymplectic manifold}.
    %\item For two complex-regularized symplectic manifolds $(W_1,I_1,\Omega_1)$ and $(W_2,I_2,\Omega_2)$ a diffeomorphism $\psi:W_1\to W_2$ is called a \emph{complex-regularized polysymplectomorphism} if $\psi^*\Omega_2=\Omega_1$. 
%\end{enumerate}
\end{definition}

\begin{example}\label{ex:cotangent}
    Our main example of a complex-regularized polysymplectic manifold is the cotangent bundle of a complex manifold. Let $(Q,i)$ be a complex manifold and equip $W=T^*Q$ with the induced complex structure $I$.
    Note that the projection $\pi:W\to Q$ is holomorphic. Then for $Z=(q,p)\in W$ and $X\in T_Z W$, we may define
    \begin{align*}
        (\lambda_1)_Z(X)&=p\circ d\pi(X)\\
        (\lambda_2)_Z(X)&=p\circ d\pi(IX).
    \end{align*}
    Then $\Omega = d\lambda_1\otimes\del_1-d\lambda_2\otimes\del_2$ is a complex-regularized polysymplectic structure on $(W,I)$, as can be checked in local coordinates. 
\end{example}

For the complex-regularized polysymplectic structure on $\R^{4n}$ defined above, we saw that both components are actually symplectic. We would like to prove a similar result in this more general setting.
\begin{proposition}
    Let $(W,I,\Omega=\omega_1\otimes\del_1+\omega_2\otimes\del_2)$ be a complex-regularized polysymplectic manifold, then both $\omega_1$ and $\omega_2$ are symplectic.
\end{proposition}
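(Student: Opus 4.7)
The plan is to verify the two defining properties of a symplectic form --- closedness and non-degeneracy --- separately for each of $\omega_1$ and $\omega_2$. Closedness is immediate from the assumption that $\Omega$ is a polysymplectic form: decomposing $d\Omega=d\omega_1\otimes\del_1+d\omega_2\otimes\del_2$ and using that $\{\del_1,\del_2\}$ is a basis of $\R^2$, the vanishing of $d\Omega$ forces both $d\omega_1=0$ and $d\omega_2=0$. So the real work is non-degeneracy.

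For non-degeneracy, the key input is that $I:TW\to TW$ is an invertible bundle endomorphism, combined with the compatibility relation $\omega_2(X,Y)=-\omega_1(X,IY)$ built into the definition. First I would consider $X\in\ker\omega_1^\flat$, i.e.~$\omega_1(X,\cdot)=0$. Then for every $Y\in TW$ we get $\omega_2(X,Y)=-\omega_1(X,IY)=0$, so $X\in\ker\omega_1^\flat\cap\ker\omega_2^\flat$; by the polysymplectic non-degeneracy of $\Omega$ this intersection is trivial, so $X=0$. Symmetrically, if $X\in\ker\omega_2^\flat$, then $\omega_1(X,IY)=0$ for all $Y$; substituting $Y=I^{-1}Z$ (which is legitimate because $I^2=-\Id$) yields $\omega_1(X,\cdot)=0$, reducing to the previous case.

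I do not expect a genuine obstacle here: the content of the proposition is really the bookkeeping fact that the compatibility $\omega_2=-\omega_1(\cdot,I\cdot)$ and invertibility of $I$ force $\ker\omega_1^\flat=\ker\omega_2^\flat$, after which polysymplectic non-degeneracy collapses both to zero. A cleaner reformulation, which I would flag in light of \cref{rem:regpolvsholsym}, is that the compatibility relation says the $\C$-valued $2$-form $\omega^\C:=\omega_1+i\omega_2$ is of type $(2,0)$ for $I$; then the non-degeneracy of $\omega_1$, of $\omega_2$, and of $\omega^\C$ (all tested on real tangent vectors) become manifestly equivalent, and each is in turn equivalent to the polysymplectic non-degeneracy of $\Omega$.
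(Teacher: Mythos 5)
Your argument is correct and is essentially the paper's own proof: the compatibility $\omega_2=-\omega_1(\cdot,I\cdot)$ shows a vector killed by one form is killed by both, and polysymplectic non-degeneracy then forces it to vanish (the paper handles the $\omega_2$ case by the same symmetry you spell out via $I^{-1}$, and treats closedness as immediate just as you do). The added remark about $\omega^\C=\omega_1+i\omega_2$ is a nice observation but matches what the paper develops separately in \cref{prop:regpolvsholsym}.
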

\begin{proof}
    By definition $\ker\omega_1^\flat\cap\ker\omega_2^\flat=0$ as $\Omega$ is a polysymplectic form. Suppose $X\in\ker\omega_1^\flat$. Then
    \begin{align*}
        \omega_2(X,Y) = -\omega_1(X,IY)=0,
    \end{align*}
    for all $Y\in TW$. So $X\in\ker\omega_1^\flat\cap\ker\omega_2^\flat=0$, yielding $X=0$. This proves $\omega_1$ is symplectic. A similar proof shows $\omega_2$ is symplectic as well.
\end{proof}

\begin{corollary}\label{cor:holomorphic}
    Let $\psi:(W,I,\Omega)\to(W',I',\Omega')$ be a diffeomorphism between complex-regularized polysymplectic manifolds, such that $\psi^*\Omega'=\Omega$. Then $\psi$ is holomorphic with respect to $I$ and $I'$.
\end{corollary}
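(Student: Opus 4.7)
The plan is to exploit the two equations $\psi^*\omega_1' = \omega_1$ and $\psi^*\omega_2' = \omega_2$ that come from unpacking $\psi^*\Omega' = \Omega$, combined with the compatibility relation $\omega_2 = -\omega_1(\cdot, I\cdot)$ (and its primed analogue), and then invoke non-degeneracy of $\omega_1'$ (which we have just established in the preceding proposition). The goal is to show $d\psi \circ I = I' \circ d\psi$.

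First I would compute $\omega_2(X, Y)$ for arbitrary $X, Y \in TW$ in two different ways. On the one hand, naturality gives
\begin{align*}
\omega_2(X, Y) = (\psi^*\omega_2')(X, Y) = \omega_2'(d\psi\, X, d\psi\, Y) = -\omega_1'(d\psi\, X, I'\, d\psi\, Y),
\end{align*}
using the compatibility relation for $(W', I', \Omega')$. On the other hand, using the compatibility relation for $(W, I, \Omega)$ together with $\psi^*\omega_1' = \omega_1$,
\begin{align*}
\omega_2(X, Y) = -\omega_1(X, IY) = -\omega_1'(d\psi\, X, d\psi(IY)).
\end{align*}

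Equating these two expressions yields
\begin{align*}
\omega_1'\bigl(d\psi\, X,\; I'\, d\psi\, Y - d\psi(IY)\bigr) = 0 \quad \text{for all } X, Y \in TW.
\end{align*}
Since $\psi$ is a diffeomorphism, $d\psi\, X$ ranges over all of $T_{\psi(\cdot)}W'$ as $X$ varies, so the above says that the vector $I'\, d\psi\, Y - d\psi(IY)$ lies in the kernel of $\omega_1'{}^\flat$ for every $Y$. By the previous proposition, $\omega_1'$ is symplectic, hence non-degenerate, and therefore $I'\, d\psi\, Y = d\psi(IY)$ for all $Y$. This is precisely the condition that $\psi$ is holomorphic with respect to $I$ and $I'$.

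There is essentially no obstacle to this argument: the only subtle point is recognising that although $\Omega'$ itself is only required to be polysymplectic (so $\omega_1'$ and $\omega_2'$ need not individually be non-degenerate in general), the complex-regularized hypothesis together with the preceding proposition upgrades $\omega_1'$ to a genuine symplectic form, and this upgrade is exactly what is needed to cancel the outer $\omega_1'$ and extract the desired identity on $I'$ and $I$.
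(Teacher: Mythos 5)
Your argument is correct and is essentially the paper's own proof: both compute $\omega_2$ via the pullback relations and the compatibility $\omega_2=-\omega_1(\cdot,I\cdot)$ on each side, then cancel using the bijectivity of $d\psi$ and the non-degeneracy of $\omega_1'$ established in the preceding proposition. No issues.
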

\begin{proof}
    A simple calculation shows
    \begin{align*}
        \omega_1'(d\psi\cdot,I'\circ d\psi\cdot)&= -\omega_2'(d\psi\cdot,d\psi\cdot)\\
        &=-\omega_2\\
        &=\omega_1(\cdot,I\cdot)\\
        &=\omega_1'(d\psi\cdot,d\psi\circ I\cdot).
    \end{align*}
    As $d\psi$ is bijective and $\omega_1'$ is non-degenerate we get $I'\circ d\psi=d\psi\circ I$.
\end{proof}

\section{Holomorphic Hamiltonian systems and a Darboux theorem}\label{sec:holomsympl}
Recall from \cref{rem:regpolvsholsym}, that the polysymplectic form on $\R^{4n}$ defined in \cref{sec:complexBridges} can be written in terms of a holomorphic symplectic form. In this section we will elaborate more on the relation between complex-regularized polysymplectic and holomorphic symplectic geometry. For an introduction to holomorphic symplectic geometry, see \cite{wagner2023pseudo}. 

Recall that a holomorphic symplectic form on a complex manifold $(W,I)$ is a holomorphic 2-form on $W$ that is closed and whose restriction to the holomorphic tangent bundle $T^{(1,0)}W$ is non-degenerate. A holomorphic Hamiltonian system is a triple $(W,\omega^\C,F)$ where $F:W\to\C$ is holomorphic. In this case $F$ induces a holomorphic vector field $\X_F$ by $\omega^\C(\X_F,\cdot)=dF$. Our first result is that regularized polysymplectic structures are in one-to-one correspondence with holomorphic symplectic structures.
\begin{proposition}\label{prop:regpolvsholsym}
    For a complex manifold $(W,I)$ define the following two sets
    \begin{align*}
        \text{RegPol}(W,I)&:=\{\Omega\mid\Omega\text{ is a complex-regularized polysymplectic form}\}\\
        \text{HolSymp}(W,I)&:=\{\omega^\C\mid \omega^\C\text{ is a holomorphic symplectic form}\}.
    \end{align*}
    There is a bijection $\alpha:\text{RegPol}(W,I)\to\text{HolSymp}(W,I)$ given by
    \[\alpha(\omega_1\otimes\del_1+\omega_2\otimes\del_2)= \omega_1+i\omega_2\]
    with inverse given by
    \[\omega^\C\mapsto \omega^\C\otimes\del_t+\bar{\omega}^\C\otimes\del_{\bar{t}},\]
    where as always $\del_t=\frac{1}{2}(\del_1-i\del_2)$ and $\del_{\bar{t}}$ is its conjugate.
\end{proposition}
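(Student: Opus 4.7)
The plan is to show well-definedness of both the forward map $\alpha$ and the candidate inverse, and then check that they are mutual inverses. The composition $\alpha\circ\alpha^{-1}$ is tautological, while $\alpha^{-1}\circ\alpha$ is a direct expansion using $\del_t=\tfrac{1}{2}(\del_1-i\del_2)$ and $\del_{\bar t}=\tfrac{1}{2}(\del_1+i\del_2)$, which returns $\omega_1\otimes\del_1+\omega_2\otimes\del_2$ in one line. Closedness of $\omega_1$ and $\omega_2$ separately is equivalent to closedness of $\omega^\C:=\omega_1+i\omega_2$ by taking real and imaginary parts of $d\omega^\C=0$. So the substantive content is the correspondence between the type-$(2,0)$ condition and the complex-regularization relation, together with the matching of the two notions of non-degeneracy.

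For the type-$(2,0)$ question, $\omega^\C$ being of type $(2,0)$ amounts to $\omega^\C(IX,Y)=i\omega^\C(X,Y)$ for all real $X,Y$; separating real and imaginary parts this reads $\omega_1(IX,Y)=-\omega_2(X,Y)$ and $\omega_2(IX,Y)=\omega_1(X,Y)$ (the second following from the first by $I^2=-1$). Using antisymmetry, the first identity rewrites as $\omega_2(X,Y)=-\omega_1(X,IY)$, which is precisely the complex-regularization condition. Conversely, starting from the regularization relation, antisymmetry of $\omega_2$ forces the symmetry $\omega_1(IX,Y)=\omega_1(X,IY)$, from which the type-$(2,0)$ identity drops out.

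For non-degeneracy, I would extend $\omega^\C$ $\C$-linearly to $TW\otimes\C$ and compute that for real $X,Y$ the $(1,0)$-projections $Z=X-iIX$ and $W=Y-iIY$ satisfy $\omega^\C(Z,W)=4\omega^\C(X,Y)=4\omega_1(X,Y)+4i\omega_2(X,Y)$. Hence $\omega^\C$ is degenerate on $T^{(1,0)}W$ if and only if there is a real nonzero $X$ with $\omega_1(X,\cdot)=\omega_2(X,\cdot)=0$, which is exactly the negation of the polysymplectic non-degeneracy $\bigcap_\alpha\ker\omega_\alpha^\flat=\{0\}$. The only real obstacle in the proof is this short linear-algebraic dictionary between the type-$(2,0)$ plus holomorphic-symplectic non-degeneracy on $T^{(1,0)}W$ and the complex-regularization plus polysymplectic non-degeneracy; once that is established the bijectivity is bookkeeping.
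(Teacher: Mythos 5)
Your dictionary between the type-$(2,0)$ identity $\omega^\C(IX,Y)=i\omega^\C(X,Y)$ and the regularization relation $\omega_2=-\omega_1(\cdot,I\cdot)$ is correct (the antisymmetry argument does give $\omega_1(IX,Y)=\omega_1(X,IY)$ in both directions), and your non-degeneracy computation $\omega^\C(X-iIX,\,Y-iIY)=4\omega^\C(X,Y)$ correctly matches degeneracy on $T^{(1,0)}W$ with a nonzero real vector in $\ker\omega_1^\flat\cap\ker\omega_2^\flat$, which is exactly the polysymplectic non-degeneracy. However, there is one step missing that the definition forces you to address: a holomorphic symplectic form is by definition a \emph{holomorphic} $2$-form, and what your argument establishes for $\alpha(\Omega)$ is only that it is a closed, smooth section of $\Lambda^{2,0}W$ that is non-degenerate on $T^{(1,0)}W$. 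You still have to show that its coefficients are holomorphic, i.e.\ $\bar{\del}\omega^\C=0$. The fix is one line, and it is exactly the point the paper makes explicitly: since $\omega^\C$ has type $(2,0)$, the decomposition $0=d\omega^\C=\del\omega^\C+\bar{\del}\omega^\C$ splits into a $(3,0)$-part and a $(2,1)$-part, which must therefore vanish separately, so $\bar{\del}\omega^\C=0$. Without this observation the map $\alpha$ is not yet known to land in $\text{HolSymp}(W,I)$, so you should add it.

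Apart from that omission your route is sound and mildly different from the paper's: the paper checks the $(2,0)$-condition by pairing with $\pm i$-eigenvectors $X^\pm$ of $I$ in the complexified tangent bundle and, for the converse direction, cites the literature for $\omega^\C(\cdot,I\cdot)=i\omega^\C$ and for the real and imaginary parts being symplectic, whereas you work throughout with real vectors and prove both directions of the dictionary (and the non-degeneracy correspondence) by hand. Your version is more self-contained; the paper's is shorter because it can lean on the earlier proposition that $\omega_1,\omega_2$ are individually symplectic and on the cited reference.
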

\begin{proof}
    Let $\Omega$ be a complex-regularized polysymplectic structure on $(W,I)$ and define $\omega^\C=\alpha(\Omega)$. Let $X^\pm\in T_\C W$ such that $IX^\pm=\pm i X^\pm$. Then
    \begin{align*}
        \omega^\C(\cdot, X^\pm)&= \omega_1(\cdot, X^\pm)+i\omega_2(\cdot,X^\pm)\\
        &=\omega_1(\cdot,X^\pm)-i\omega_1(\cdot, IX^\pm)\\
        &=\omega_1(\cdot,X^\pm)\pm\omega_1(\cdot,X^\pm).
    \end{align*}
    Indeed we see that $\iota_X\omega^\C=0$ for $X\in T^{(0,1)}W$ and $\omega^\C$ is non-degenerate on $T^{(1,0)}W$. At this point, $\omega^\C$ could still be any smooth (not necessarily holomorphic) section of $\Lambda^{2,0}W$. However, note that since $\omega^\C$ is closed it must be holomorphic for the following reason. The closedness means that $0=d\omega^\C=\del\omega^\C+\bar{\del}\omega^\C$. Since $\del\omega^\C$ is a section of $\Lambda^{3,0}W$, whereas $\bar{\del}\omega^\C$ is a section of $\Lambda^{2,1}W$ they both must vanish. We conclude that $\omega^\C$ is a holomorphic symplectic form, yielding that $\alpha$ is well-defined.\newline
    Vice versa, if $\omega^\C$ is a holomorphic symplectic form then its real and imaginary parts are real symplectic forms and satisfy $\text{Im}(\omega^\C)=-\text{Re}(\omega^\C)(\cdot,I\cdot)$, since $\omega^\C(\cdot,I\cdot)=i\omega^\C$ (see \cite{wagner2023pseudo}). So the map $\omega^\C\mapsto \omega^\C\otimes\del_t+\bar{\omega}^\C\otimes\del_{\bar{t}}$ is well-defined and is clearly the inverse of $\alpha$.
\end{proof}
From this proposition it might seem like complex-regularized polysymplectic geometry is equivalent to holomorphic symplectic geometry. However, even though the manifolds that both frameworks allow are the same, the equations that they study are different. While in polysymplectic geometry Hamiltonians are real-valued and yield PDEs, in holomorphic symplectic geometry one studies holomorphic Hamiltonians, giving rise to holomorphic vector fields. As it turns out, the holomorphic Hamiltonians from holomorphic symplectic geometry correspond precisely to the currents in complex-regularized polysymplectic geometry. 

Recall from \cref{def:currentsandHamiltonians} that a function $F:W\to\R^2$ on a polysymplectic manifold $(W,\Omega)$ is called a current if $dF_w$ lies in the image $\Omega^\flat_w(T_wW)$ for every $w\in W$. In this case we define $X_F$ by $X_F(w)=(\Omega^\flat_w)^{-1}(dF_w)$. Heuristically, currents describe symmetries of polysymplectic systems. On the other hand, holomorphic Hamiltonians describe symmetries of holomorphic symplectic systems. Given the correspondence in \cref{prop:regpolvsholsym} between complex-regularized polysymplectic and holomorphic symplectic forms, it should come as no surprise that currents in our polysymplectic setting correspond to holomorphic Hamiltonians. This is the content of the next lemma, which will be proven at the end of this section.
\begin{lemma}\label{lem:currents}
    Let $(W,I,\Omega)$ be a complex-regularized polysymplectic manifold and $\omega^\C=\alpha(\Omega)$. When we identify $\R^2\cong\C$ and $TW\cong T^{(1,0)}W$, we get that $F:W\to\R^2\cong \C$ is a current if and only if it is a holomorphic function and in that case $X_F=\X_F$. 
\end{lemma}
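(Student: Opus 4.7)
My plan is to unpack both sides against the decomposition $F=F_1+iF_2$ and reduce the statement to the Cauchy-Riemann equations in the form that applies on an almost complex manifold.

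\textbf{Step 1 (current condition in components).} Writing $\Omega=\omega_1\otimes\del_1+\omega_2\otimes\del_2$ and $F=(F_1,F_2)$, the equation $\Omega(X_F,\cdot)=dF$ becomes the pair
\begin{align*}
\omega_1(X_F,\cdot)=dF_1,\qquad \omega_2(X_F,\cdot)=dF_2.
\end{align*}
Since $\omega_1$ is symplectic (by the proposition just proven), the first equation alone determines a unique candidate vector field $X_F$. So $F$ is a current if and only if this $X_F$ also satisfies $\omega_2(X_F,\cdot)=dF_2$.

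\textbf{Step 2 (rewrite using $\omega_2=-\omega_1(\cdot,I\cdot)$).} Substituting the defining identity of a complex-regularized polysymplectic form gives
\begin{align*}
\omega_2(X_F,\cdot)=-\omega_1(X_F,I\cdot)=-dF_1\circ I.
\end{align*}
Thus $F$ is a current if and only if $dF_1\circ I=-dF_2$. Separating real and imaginary parts of the equation $dF\circ I=i\,dF$ (the Cauchy-Riemann equations for $F:W\to\C$ w.r.t.\ $I$) gives the two conditions $dF_1\circ I=-dF_2$ and $dF_2\circ I=dF_1$, which are equivalent to each other (apply $I$ to one). Hence the single condition obtained is precisely holomorphicity of $F$.

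\textbf{Step 3 (matching the vector fields).} Assuming $F$ is a current, extend $\omega_1,\omega_2$ and hence $\omega^\C=\omega_1+i\omega_2$ complex-bilinearly to $T_\C W$. By \cref{prop:regpolvsholsym}, $\omega^\C\in\Lambda^{2,0}W$, so it vanishes when contracted with any element of $T^{(0,1)}W$. For $Y\in T^{(1,0)}W$, a direct computation using the current equations gives
\begin{align*}
\omega^\C(X_F,Y)=\omega_1(X_F,Y)+i\omega_2(X_F,Y)=dF_1(Y)+i\,dF_2(Y)=dF(Y).
\end{align*}
Since $\omega^\C$ is $(2,0)$, we may replace $X_F$ by its $(1,0)$-part $X_F^{1,0}=\tfrac12(X_F-iIX_F)$ on the left without changing the value. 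This is exactly the equation defining the holomorphic Hamiltonian vector field $\X_F$, so $\X_F=X_F^{1,0}$, i.e.\ $X_F$ and $\X_F$ coincide under the identification $TW\cong T^{(1,0)}W$.

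The only subtle point is being careful about the identification $TW\cong T^{(1,0)}W$ and the fact that $\omega^\C$ is $(2,0)$ rather than merely complex-valued; once this is understood, the argument is a direct translation between the two defining equations and the rest is bookkeeping. I don't foresee any real obstacle beyond making sure the conventions for the Cauchy-Riemann equations and the $(1,0)$-projection are used consistently.
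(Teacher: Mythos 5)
Your proof is correct, and it takes a genuinely different route from the paper. The paper proves the lemma in Darboux coordinates: it invokes \cref{cor:Darboux}, computes the image of $\Omega^\flat$ explicitly, recognizes the resulting four equations as the Cauchy--Riemann equations, and then matches the component formulas for $X_F$ and $\X_F$ by hand. You instead argue intrinsically: non-degeneracy of $\omega_1$ pins down the candidate $X_F$, the compatibility $\omega_2=-\omega_1(\cdot,I\cdot)$ turns the remaining current condition into $dF_1\circ I=-dF_2$, which you correctly identify with $dF\circ I = i\,dF$ (your sign bookkeeping matches the paper's coordinate CR equations, including the minus sign in the complex structure on the $p$-coordinates), and the vector-field identification follows from the $(2,0)$-type and non-degeneracy of $\omega^\C$ on $T^{(1,0)}W$ established in \cref{prop:regpolvsholsym}, with $X\mapsto \tfrac12(X-iIX)$ reproducing exactly the identification $\X_{q^j}=X_{q^j_1}+iX_{q^j_2}$, $\X_{p^j}=X_{p^j_1}-iX_{p^j_2}$ used in the paper. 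What your approach buys is independence from the Darboux theorem (so the argument works verbatim on any complex-regularized polysymplectic manifold without first normalizing the structure) and a cleaner conceptual explanation of \emph{why} currents are holomorphic functions; what the paper's computation buys is the explicit local formulas for $X_F$ and $\X_F$, which are convenient for later coordinate work. The only point worth stating explicitly in a written version is that for holomorphic $F$ the complexified differential restricted to $T^{(1,0)}W$ agrees with the holomorphic differential appearing in the definition $\omega^\C(\X_F,\cdot)=dF$, so that your identity $\omega^\C(X_F^{1,0},Y)=dF(Y)$ for $Y\in T^{(1,0)}W$ really is the defining equation of $\X_F$; you use this implicitly and it is true, so there is no gap.
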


As a nice corollary of \cref{prop:regpolvsholsym}, we get a Darboux theorem for complex-regularized polysymplectic manifolds.
\begin{corollary}[Darboux theorem]\label{cor:Darboux}
    Let $(W,I,\Omega)$ be a complex-regularized polysymplectic manifold. Around every point $w\in W$ there are coordinates $q^i_1,q^i_2,p^i_1,p^i_2$ such that
    \begin{align*}
        I\frac{\del}{\del q^i_1}=\frac{\del}{\del q^i_2} && I\frac{\del}{\del p^i_1}=-\frac{\del}{\del p^i_2}
    \end{align*}                                                                                       
    and
    \begin{align*}
        \omega_1=\sum_i (dp^i_1\wedge dq^i_1+dp^i_2\wedge dq^i_2) && \omega_2=\sum_i (dp^i_1\wedge dq^i_2-dp^i_2\wedge dq^i_1).
    \end{align*}
\end{corollary}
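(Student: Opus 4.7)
The strategy is to reduce the statement to the classical holomorphic Darboux theorem for holomorphic symplectic manifolds via the bijection $\alpha$ established in \cref{prop:regpolvsholsym}.

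First, I would pass to the holomorphic symplectic side: set $\omega^\C := \alpha(\Omega) = \omega_1 + i\omega_2$, which by \cref{prop:regpolvsholsym} is a holomorphic symplectic form on $(W,I)$. Now invoke the holomorphic Darboux theorem (proven e.g. in \cite{wagner2023pseudo}): around any point of $W$ there exist local holomorphic coordinates $z^1,\ldots,z^n,w^1,\ldots,w^n$ such that
\[
\omega^\C = \sum_i dw^i\wedge dz^i.
\]

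Next, I would split these holomorphic coordinates into real and imaginary parts in a way compatible with the claimed formulas. Writing $z^i = q^i_1 + i\,q^i_2$ and $w^i = p^i_1 - i\,p^i_2$ (the sign convention on $p^i_2$ is chosen to match the formula for $\omega_2$ in the statement; cf.\ \cref{rem:regpolvsholsym}), a direct expansion gives
\begin{align*}
dw^i\wedge dz^i &= (dp^i_1 - i\,dp^i_2)\wedge(dq^i_1 + i\,dq^i_2)\\
&= (dp^i_1\wedge dq^i_1 + dp^i_2\wedge dq^i_2) + i(dp^i_1\wedge dq^i_2 - dp^i_2\wedge dq^i_1).
\end{align*}
Summing over $i$ and taking real and imaginary parts, and using $\omega^\C = \omega_1 + i\omega_2$, yields exactly the claimed formulas for $\omega_1$ and $\omega_2$.

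It remains to verify the action of $I$ on the coordinate vector fields. Since $z^i$ is holomorphic, the real coordinates $q^i_1 = \text{Re}\,z^i$ and $q^i_2 = \text{Im}\,z^i$ satisfy $I\,\partial/\partial q^i_1 = \partial/\partial q^i_2$ by the standard characterization of holomorphic coordinates. Similarly, since $w^i = p^i_1 + i(-p^i_2)$ is holomorphic, the real coordinate $-p^i_2 = \text{Im}\,w^i$ plays the role of the imaginary part, giving $I\,\partial/\partial p^i_1 = -\partial/\partial p^i_2$. This completes the proof.

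The main content of the argument lies entirely in \cref{prop:regpolvsholsym} and the classical holomorphic Darboux theorem; once these are in hand, the only subtle point is fixing the sign convention on the momentum coordinates so that the real and imaginary parts of $dw^i\wedge dz^i$ match the polysymplectic components $\omega_1$ and $\omega_2$ as stated.
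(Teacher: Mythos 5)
Your proposal is correct and follows essentially the same route as the paper, which likewise deduces the statement directly from \cref{prop:regpolvsholsym} together with the holomorphic Darboux theorem of \cite{wagner2023pseudo}; you merely spell out the real/imaginary splitting $z^i=q^i_1+iq^i_2$, $w^i=p^i_1-ip^i_2$ and the resulting sign conventions, which the paper leaves implicit.
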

\begin{proof}
    This follows immediately from \cref{prop:regpolvsholsym} and the Darboux theorem in \cite{wagner2023pseudo}.
\end{proof}

\begin{proof}[Proof of \cref{lem:currents}]
    As these statements are local in nature, we may use the Darboux theorem above to compute in local coordinates. Note the minus sign in the complex structure on the $p$-coordinates. An easy computation shows that the image of $\Omega^\flat_w$ is spanned by 
    \[\{dq^j_1\otimes\del_1+dq^j_2\otimes\del_2, dq^j_2\otimes\del_1-dq^j_1\otimes\del_2, dp^j_1\otimes\del_1-dp^j_2\otimes\del_2, dp^j_2\otimes\del_1+dp^j_1\otimes\del_2\}.\]
    That means that $dF$ lies in this image precisely when 
    \begin{align*}
        \frac{\del F_1}{\del q^j_1 } &= \frac{\del F_2}{\del q^j_2}\\
        -\frac{\del F_1}{\del q^j_2 } &= \frac{\del F_2}{\del q^j_1}\\
        -\frac{\del F_1}{\del p^j_1 } &= \frac{\del F_2}{\del p^j_2}\\
        \frac{\del F_1}{\del p^j_2 } &= \frac{\del F_2}{\del p^j_1},
    \end{align*}
    which are precisely the Cauchy-Riemann equations. 

    We also see that for currents $F$ the corresponding vector field $X_F$ is given by
\[
X_F = \begin{pmatrix}
    -\del_{p^j_1} F_1 \\ -\del_{p^j_2} F_1 \\ \del_{q^j_1} F_1 \\ \del_{q^j_2} F_1
\end{pmatrix}
= \begin{pmatrix}
    \del_{p^j_2} F_2 \\ -\del_{p^j_1} F_2 \\ \del_{q^j_2} F_2 \\ -\del_{q^j_1} F_2
\end{pmatrix}.
\]
On the other hand, the holomorphic vector field $\X_F$ is given by $(\X_F)_{q^j}=-\del_{p^j} F = -\del_{p^j_1}F_1-i\del_{p^j_2}F_1$ and $(\X_F)_{p^j}=\del_{q^j}F=\del_{q^j_1}F_1-i\del_{q^j_2}F_1$. The identification between the real and holomorphic tangent bundle of $W$ takes place by taking $\X_{q^j}=X_{q^j_1}+iX_{q^j_2}$ and $\X_{p^j}=X_{p^j_1}-iX_{p^j_2}$. The formulas above easily show that indeed $X_F$ and $\X_F$ coincide.
\end{proof}

\section{The Lagrange formalism}\label{sec:Lagrange}
The Hamiltonian formalism from \cref{sec:complexBridges} has a Lagrangian counterpart. Let $(Q,i)$ be a complex manifold and $L:\T^2\times TQ\to \R$ a Lagrangian on its tangent bundle depending on "time" $t\in\T^2$. To a map $q:\T^2\to Q$ we can assign its action\footnote{The factor of 2 is just put here in order to avoid dealing with extra factors of $\frac{1}{2}$ throughout.}
\begin{align*}
    \L(q):=\int_{\T^2} L(t,q,2\del_t q)\dV,
\end{align*}
where as before $\del_t=\frac{1}{2}(\del_1-i\del_2)$ and $\dV$ is the volume form on the torus. 

When $L$ is convex in the fibers, we can make the Legendre transformation 
\begin{align*}
    H(t,q,p) &= \max_{v\in T_qQ}\left(\langle p,v\rangle-L(t,q,v)\right) 
  %  &=\left.\left( \langle p,v\rangle - L(t,q,v)\right)\right|_{p=\frac{\del L}{\del v}(t,q,2\del_t q)},
\end{align*}
By standard arguments the maximum is attained precisely when $p=\frac{\del L}{\del v}$,
where $\frac{\del L}{\del v}$ denotes the derivative of $L$ in the direction of the fiber. This defines a function $H:\T^2\times T^*Q\to \R$. 

\begin{lemma}
    In local coordinates, the Euler-Lagrange equations for $\L$ correspond to the Hamiltonian equations (\ref{eq:complexBridges}) for $H$. 
\end{lemma}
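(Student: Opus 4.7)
Since the statement is local, the plan is to work in the Darboux-type complex coordinates of \cref{cor:Darboux}: holomorphic coordinates $z^j = q_1^j + iq_2^j$ on $Q$ together with the dual fibre coordinates $p_1^j, p_2^j$ on $T^*Q$. Writing the tangent variable as $v = v_1 + iv_2 \in T_qQ$ and using
\[
2\del_t q = (\del_1 q_1 + \del_2 q_2) + i(\del_1 q_2 - \del_2 q_1),
\]
the action reduces to a genuine first-order variational functional of $(q_1, q_2)$ alone, namely $\int_{\T^2} L(t, q_1, q_2,\; \del_1 q_1 + \del_2 q_2,\; \del_1 q_2 - \del_2 q_1)\,d\mathcal{V}$, to which standard Euler-Lagrange theory applies.

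The first step is to compute these Euler-Lagrange equations by varying $\delta q = (\delta q_1, \delta q_2)$ and integrating by parts on $\T^2$. Care is needed with the chain rule through the two $v$-entries (for instance $\del L/\del(\del_2 q_1) = -\del L/\del v_2$). Setting $p_\alpha := \del L/\del v_\alpha$, which is the Legendre prescription, the resulting system reads
\[
\frac{\del L}{\del q_1} = \del_1 p_1 - \del_2 p_2, \qquad \frac{\del L}{\del q_2} = \del_1 p_2 + \del_2 p_1,
\]
which are precisely the real and imaginary components of the identity $2\del_{\bar t} p = \nabla_q L$.

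The second step is to invoke the two standard Legendre-transform identities at the maximizer $p = \del L/\del v$: namely $\nabla_p H(q, p) = v$ and $\nabla_q H(q, p) = -\nabla_q L(q, v)$. The latter converts the Euler-Lagrange system into $-2\del_{\bar t} p = \nabla_q H$, which is the second row of (\ref{eq:complexBridges}). The first row $2\del_t q = \nabla_p H$ is then immediate: combine $v = \nabla_p H$ with the defining relation $v = 2\del_t q$, so that $p = 2\del_t q$ automatically plays the role of the conjugate momentum.

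The only genuine obstacle is sign bookkeeping: verifying that the components of $2\del_{\bar t} p$ in these coordinates really do equal $(\del_1 p_1 - \del_2 p_2,\; \del_1 p_2 + \del_2 p_1)$, i.e., that one uses the same complex structure on the $p$-fibres as on the $q$-variables (as already done in \cref{eq:pvariables,eq:qvariables}), rather than the structure $i^* = -i$ coming from the cotangent identification. Once this convention is pinned down, the equivalence is a term-by-term match.
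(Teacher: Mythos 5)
Your proposal is correct and follows essentially the same route as the paper: write the action in local real coordinates as $\int_{\T^2}L(t,q_1,q_2,\del_1q_1+\del_2q_2,\del_1q_2-\del_2q_1)\dV$, derive the Euler--Lagrange system by variation and integration by parts, and then use the Legendre identities $\nabla_pH=v$ and $\nabla_qH=-\nabla_qL$ at $p=\frac{\del L}{\del v}(t,q,2\del_tq)$ to recover (\ref{eq:complexBridges}). Only two cosmetic points: the equation $-2\del_{\bar t}p=\nabla_qH$ is the first (not second) component of (\ref{eq:complexBridges}), and the conjugate momentum is $p=\frac{\del L}{\del v}(t,q,2\del_tq)$ in general (equal to $2\del_tq$ only for the quadratic Lagrangian), though your actual argument uses the correct relation.
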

\begin{proof}
    We may assume $Q=\R^{2n}$ with its standard complex structure $i$. An element of $Q$ will be denoted by $(q_1,q_2)$ with $q_1,q_2\in \R^n$. Then the action is given by 
    \[
    \L(q) = \int_{\T^2}L(t,q_1,q_2,\del_1q_1+\del_2q_2,\del_1q_2-\del_2q_1)\dV.
    \]
    Let $q^s$ denote a family of maps $\T^2\to Q$, such that $q^0=q$ and $\frac{d}{ds}q^s|_{s=0}=\dot{q}$. Then 
    \begin{align*}
        d\L_q(\dot{q}) = \int_{\T^2} \left(\dot{q_1}\left(\frac{\del L}{\del q_1}-\del_1\frac{\del L}{\del v_1}+\del_2\frac{\del L}{\del v_2}\right)+\dot{q_2}\left(\frac{\del L}{\del q_2}-\del_1\frac{\del L}{\del v_2} -\del_2 \frac{\del L}{\del v_1}\right)\right)\dV,
    \end{align*}
    where it is implied that $L$ and its derivatives are evaluated at $(t,q,2\del_t q)$.
    Thus the Euler-Lagrange equations become
    \begin{align}\label{eq:EulerLagrange}\begin{split}
        \frac{\del L}{\del q_1}(t,q,2\del_t q) &= \del_1\frac{\del L}{\del v_1}(t,q,2\del_t q) - \del_2\frac{\del L}{\del v_2}(t,q,2\del_t q)\\
        \frac{\del L}{\del q_2}(t,q,2\del_t q) &= \del_1\frac{\del L}{\del v_2}(t,q,2\del_t q) + \del_2\frac{\del L}{\del v_1}(t,q,2\del_t q).
    \end{split}\end{align}
    Noting that 
    \begin{align*}
        \frac{\del H}{\del q_1} = -\frac{\del L}{\del q_1} && \frac{\del H}{\del p_1} = \del_1q_1+\del_2q_2\\
        \frac{\del H}{\del q_2} = -\frac{\del L}{\del q_2} && \frac{\del H}{\del p_2} = \del_1q_2-\del_2q_1
    \end{align*}
    and using that $p_i=\frac{\del L}{\del v_i}(t,q,2\del_t q)$ we get the equivalence with \cref{eq:complexBridges}.
\end{proof}

\begin{example}
    Our main example is a quadratic Lagrangian $L:\T^2\times TQ\to \R$ given by $L(t,q,v)=\frac{1}{2}|v|_g^2-V(q)$, where $g$ is a Riemannian metric on $Q$. It follows that $p=2\del_t q$ and $H(t,q,p)=\frac{1}{2}|p|_g^2+V(q)$. The Euler-Lagrange equations for this Lagrangian describe harmonic maps from $\T^2$ into $Q$. As we can see, this is indeed a natural generalization of the Lagrangian formalism for geodesics to the $d=2$ case.
    %Just as before, this means that $q$ minimizes the action if it satisfies the non-linear Laplace \cref{eq:complexLaplace}. When $V=0$ solutions are given by harmonic maps $\T^2\to Q$.
    % \begin{align*}
    %     p_1 &= \del_1q_1 +\del_2 q_2\\
    %     p_2 &= \del_1 q_2-\del_2 q_1
    % \end{align*}
\end{example}

Without referring to coordinates, the equivalence between the Hamilton and Lagrange formalisms can also be seen by a variational principle. 

\begin{proposition}\label{prop:var}
    Suppose $L:\T^2\times TQ\to \R$ is fibrewise convex and $H:\T^2\times T^*Q\to \R$ is its Legendre transform as above. Then a map $q:\T^2\to Q$ is an extremum of $\L$ if and only if $Z=(q,p):\T^2\to T^*Q$, given by $p=\frac{\del L}{\del v}(t,q,2\del_t q)$ is an extremum of the action functional
    \begin{align*}
        \A_H(Z)=\int_{\T^2}\left( \langle p,2\del_tq\rangle-H(t,q,p)\right)\, dt_1\wedge dt_2.
    \end{align*}
\end{proposition}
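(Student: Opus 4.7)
The plan is to recognize $\A_H$, when restricted to the graph determined by the Legendre map, as being identical to $\L$, and then to extract the Hamilton equations from the first variation of $\A_H$ in a way that couples cleanly to the Euler--Lagrange equations via the preceding lemma.

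First I would record the pointwise Legendre identity: for each $(t,q,v)$, writing $p:=(\del L/\del v)(t,q,v)$, one has $\langle p,v\rangle - H(t,q,p) = L(t,q,v)$, together with $v=(\del H/\del p)(t,q,p)$ and $(\del H/\del q)(t,q,p)=-(\del L/\del q)(t,q,v)$. Applying this pointwise with $v=2\del_t q$ and integrating over $\T^2$ immediately gives $\A_H(q,p)=\L(q)$ for any $Z=(q,p)$ with $p=(\del L/\del v)(t,q,2\del_t q)$.

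Next I would compute the first variation of $\A_H$ at an arbitrary $Z=(q,p)$: for a variation $(\dot q,\dot p)$,
\begin{align*}
    d\A_H|_Z(\dot q,\dot p) = \int_{\T^2}\Bigl[\bigl\langle \dot p,\, 2\del_t q - \tfrac{\del H}{\del p}\bigr\rangle + \bigl\langle p,\, 2\del_t\dot q\bigr\rangle - \tfrac{\del H}{\del q}\cdot \dot q\Bigr]\dV.
\end{align*}
Vanishing on variations with $\dot q=0$ forces $2\del_t q=(\del H/\del p)(t,q,p)$, which by the Legendre identity is exactly the condition $p=(\del L/\del v)(t,q,2\del_t q)$. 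Vanishing on variations with $\dot p=0$, after integrating the middle term by parts on the boundaryless torus, yields precisely the remaining components of \cref{eq:complexBridges}.

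With these computations in hand, both directions follow. If $Z$ is a critical point of $\A_H$, then the $\dot p$-variations show $p$ is Legendre-dual to $2\del_t q$, and the $\dot q$-variations show $Z$ solves \cref{eq:complexBridges}; by the preceding lemma this is equivalent to \cref{eq:EulerLagrange}, so $q$ is a critical point of $\L$. Conversely, if $q$ is critical for $\L$ and $p=(\del L/\del v)(t,q,2\del_t q)$, then the lemma provides \cref{eq:complexBridges}, so the second summand in $d\A_H|_Z$ vanishes by integration by parts, while the first vanishes by the Legendre identity; hence $d\A_H|_Z\equiv 0$.

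The main obstacle is essentially bookkeeping: the derivative $2\del_t$ mixes two spatial directions, so the integration by parts produces exactly the combinations appearing on the right-hand sides of \cref{eq:qvariables}, and one must check that these agree with $\del H/\del q$ under the Legendre substitution. Since everything reduces to the pointwise duality of $L$ and $H$, and $\T^2$ has no boundary so no boundary terms appear, there are no functional-analytic subtleties beyond smooth variations.
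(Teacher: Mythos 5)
Your argument is correct, but it takes a somewhat different route than the paper does. You prove the statement by computing the full first variation of $\A_H$, reading off that the critical-point equations are precisely the condition $2\del_t q=\frac{\del H}{\del p}$ (i.e.\ the Legendre relation defining $p$) together with the remaining components of \cref{eq:complexBridges} obtained by integration by parts, and then feeding this through the preceding lemma identifying \cref{eq:complexBridges} with the Euler--Lagrange equations \cref{eq:EulerLagrange}; the pointwise identity $\langle p,2\del_t q\rangle-H=L$ on the Legendre graph supplies the link $\A_H(Z)=\L(q)$. The paper instead argues more structurally and without coordinates: extremality of $\A_H$ in the fiber directions forces $p$ to extremize $p\mapsto\langle p,2\del_t q\rangle-H(t,q,p)$ pointwise, and since the Legendre transform is a projection on convex functions this happens exactly when $p=\frac{\del L}{\del v}(t,q,2\del_t q)$ and the integrand reduces to $L(t,q,2\del_t q)$, so that eliminating $p$ identifies $\A_H$ with $\L$ and critical points correspond directly, with no need to rederive the equations of motion or invoke the coordinate lemma. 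Your version is more explicit and self-contained (it re-exhibits the Hamilton equations as the Euler--Lagrange equations of $\A_H$), at the cost of being tied to the local-coordinate lemma and the bookkeeping of the $2\del_t$, $2\del_{\bar t}$ combinations; the paper's version is shorter, intrinsic, and makes transparent why convexity (via the projection property of the Legendre transform) is the real hypothesis. Both are complete arguments for the statement as given.
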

\begin{proof}
    The proof is analogous to the corresponding statement for $d=1$. The idea is that when $Z$ is extremal for $\A_H$, fiberwise $p$ must be an extremum of the function 
    \[p\mapsto \langle p,2\del_tq\rangle-H(t,q,p).\]
    Since the Legendre transform is a projection operator on the space of convex functions, this happens precisely when $p=\frac{\del L}{\del v}(t,q,2\del_t q)$ and $\langle p,2\del_tq\rangle-H(t,q,p)=L(t,q,p)$. 
\end{proof}

Note that the term $\langle p,2\del_t q\rangle$ in the action functional may also be defined intrinsically by using the 1-forms $\lambda_1$ and $\lambda_2$ on $T^*Q$ that were defined in \cref{ex:cotangent}. Indeed
\begin{align*}
    \langle p,2\del_t q\rangle &= p\circ d\pi(\del_1 Z-I\del_2 Z)=\lambda_1(\del_1Z)-\lambda_2(\del_2 Z).
\end{align*}
In \cref{sec:actionfunctional}, we will see a more general definition of the action functional on polysymplectic manifolds.

\section{The best of both worlds}\label{sec:applications}
%use this also as motivation for next section, because we need spheres and not just tori
The motivation given at the start of this article for introducing the complex-regularized polysymplectic language, was to take advantage of "ellipticity" of holomorphic symplectic geometry, whilst at the same time keep the connection with PDEs and harmonic maps from polysymplectic geometry. In this section we want to give examples of two problems in which we can clearly see the use for connecting these two worlds. This section is not meant to give rigorous proofs, but rather aims to describe the ideas involved.

\subsubsection*{Obstructions to holomorphic Lagrangian embeddings}
Given a holomorphic symplectic manifold $W$ of dimension $4n$ and a closed complex manifold $L$ of dimension $2n$, a natural question to ask is whether there exists a holomorphic Lagrangian embedding $L\hookrightarrow W$, i.e. a holomorphic embedding such that the pullback of the holomorphic symplectic form on $W$ vanishes on $L$. Without going into details, we will argue that this question is related to Morse theory of harmonic maps and therefore naturally connects to the polysymplectic world.

The analogous question for $d=1$ is whether an $n$-dimensional manifold $\L$ has a Lagrangian embedding into a $2n$-dimensional symplectic manifold $\W$. It is well-known that obstructions to the existence of such a Lagrangian embedding stem from the Morse theory of geodesics on $\L$ (see for example \cite{latschev2014fukaya}). When a Lagrangian embedding $\L\hookrightarrow\W$ exists one can define the evaluation map from the moduli space of holomorphic disks with boundary in $\L$ to the loop space $\Lambda\L=\{\ell:S^1\to \L\}$ of $\L$
\[
\text{ev}:\M=\{u:(D^2,\del D^2=S^1)\to (\W,\L)\mid u\text{ is holomorphic}\}\to\Lambda\L.
\]
This gives a homology chain $\text{ev}_*[\M]\in C_*(\Lambda\L)$. The homology of $\Lambda\L$ can in turn be described using the Morse theory of closed geodesics in $\L$. The situation is depicted in \cref{fig:realLagrangian}. The dotted lines denote a Morse gradient flow line from a closed geodesic $\gamma$ to the boundary of a holomorphic disk $(D^2,\del D^2)\to(\W,\L)$.

\begin{figure}[h]
\centering
\begin{tikzpicture}[x=0.75pt,y=0.75pt,yscale=-1,xscale=1]
%uncomment if require: \path (0,299); %set diagram left start at 0, and has height of 299
%Straight Lines [id:da1914601798694482] 
\draw    (100,66) -- (48,227) ;
%Straight Lines [id:da6257734324614908] 
\draw    (48,227) -- (348,227) ;
%Shape: Ellipse [id:dp4369174420148316] 
\draw   (126,115) .. controls (126,103.95) and (145.25,95) .. (169,95) .. controls (192.75,95) and (212,103.95) .. (212,115) .. controls (212,126.05) and (192.75,135) .. (169,135) .. controls (145.25,135) and (126,126.05) .. (126,115) -- cycle ;
%Shape: Ellipse [id:dp962523760879985] 
\draw   (185,181) .. controls (185,169.95) and (204.25,161) .. (228,161) .. controls (251.75,161) and (271,169.95) .. (271,181) .. controls (271,192.05) and (251.75,201) .. (228,201) .. controls (204.25,201) and (185,192.05) .. (185,181) -- cycle ;
%Straight Lines [id:da6301450546266218] 
\draw  [dash pattern={on 4.5pt off 4.5pt}]  (128,121) -- (187,188) ;
%Straight Lines [id:da1482330509455625] 
\draw  [dash pattern={on 4.5pt off 4.5pt}]  (210,107) -- (269,174) ;
%Curve Lines [id:da6028099616479372] 
\draw    (129,107) .. controls (155,55) and (183,53) .. (210,107) ;

% Text Node
\draw (196,65.4) node [anchor=north west][inner sep=0.75pt]    {$D^{2}$};
% Text Node
\draw (165,113.4) node [anchor=north west][inner sep=0.75pt]    {$S^{1}$};
% Text Node
\draw (272,177.4) node [anchor=north west][inner sep=0.75pt]    {$\gamma $};
% Text Node
\draw (60,202.4) node [anchor=north west][inner sep=0.75pt]    {$\mathcal{L}$};
% Text Node
\draw (37,108.4) node [anchor=north west][inner sep=0.75pt]    {$\mathcal{W}$};
\end{tikzpicture}
\caption{\label{fig:realLagrangian}Lagrangian embedding $\mathcal{L}\subseteq\mathcal{W}$ with holomorphic disk $(D^2,S^1)\to (\mathcal{W},\mathcal{L})$ and geodesic $\gamma$.}
\end{figure}
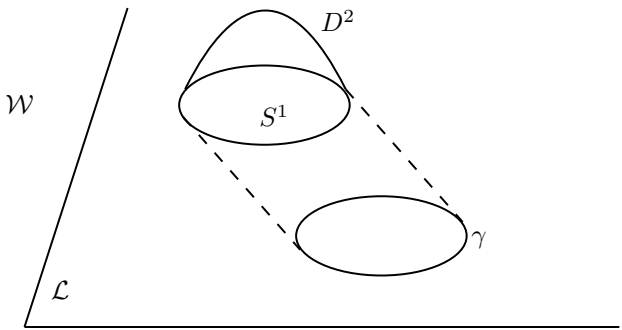

Going back to the situation for $d=2$, it is natural to consider a similar construction. The role of holomorphic disks is now taken over by Fueter disks $f:(D^3,\del D^3=S^2)\to(W,L)$, where $W$ is the holomorphic symplectic manifold and $L$ is an embedded holomorphic Lagrangian. This means that the evaluation map now takes values in the space of maps $S^2\to L$. The homology of this space can be described using the Morse theory of harmonic spheres. The situation is depicted in \cref{fig:complexLagrangian}.

\begin{figure}[h]
\centering
\begin{tikzpicture}[x=0.75pt,y=0.75pt,yscale=-1,xscale=1]
%uncomment if require: \path (0,299); %set diagram left start at 0, and has height of 299

%Straight Lines [id:da9206258460899117] 
\draw    (100,66) -- (48,227) ;
%Straight Lines [id:da4913097529742727] 
\draw    (48,227) -- (348,227) ;
%Shape: Ellipse [id:dp4145669919837447] 
\draw   (126,115) .. controls (126,103.95) and (145.25,95) .. (169,95) .. controls (192.75,95) and (212,103.95) .. (212,115) .. controls (212,126.05) and (192.75,135) .. (169,135) .. controls (145.25,135) and (126,126.05) .. (126,115) -- cycle ;
%Shape: Ellipse [id:dp16638627531795924] 
\draw   (185,181) .. controls (185,169.95) and (204.25,161) .. (228,161) .. controls (251.75,161) and (271,169.95) .. (271,181) .. controls (271,192.05) and (251.75,201) .. (228,201) .. controls (204.25,201) and (185,192.05) .. (185,181) -- cycle ;
%Straight Lines [id:da5312529544923184] 
\draw  [dash pattern={on 4.5pt off 4.5pt}]  (128,121) -- (187,188) ;
%Straight Lines [id:da6944569439506596] 
\draw  [dash pattern={on 4.5pt off 4.5pt}]  (210,107) -- (269,174) ;
%Curve Lines [id:da12195493606795926] 
\draw    (129,107) .. controls (155,55) and (183,53) .. (210,107) ;
%Shape: Ellipse [id:dp06568632480588876] 
\draw  [dash pattern={on 0.84pt off 2.51pt}] (176.45,99.92) .. controls (182.5,107.34) and (184.07,120.1) .. (179.95,128.42) .. controls (175.84,136.75) and (167.6,137.49) .. (161.55,130.08) .. controls (155.5,122.66) and (153.93,109.9) .. (158.05,101.58) .. controls (162.16,93.25) and (170.4,92.51) .. (176.45,99.92) -- cycle ;
%Shape: Ellipse [id:dp3454720544418459] 
\draw  [dash pattern={on 0.84pt off 2.51pt}] (235.45,165.92) .. controls (241.5,173.34) and (243.07,186.1) .. (238.95,194.42) .. controls (234.84,202.75) and (226.6,203.49) .. (220.55,196.08) .. controls (214.5,188.66) and (212.93,175.9) .. (217.05,167.58) .. controls (221.16,159.25) and (229.4,158.51) .. (235.45,165.92) -- cycle ;

% Text Node
\draw (60,202.4) node [anchor=north west][inner sep=0.75pt]    {$L$};
% Text Node
\draw (37,108.4) node [anchor=north west][inner sep=0.75pt]    {$W$};
% Text Node
\draw (196,56.4) node [anchor=north west][inner sep=0.75pt]    {$D^{3}$};
% Text Node
\draw (185,108.4) node [anchor=north west][inner sep=0.75pt]    {$S^{2}$};
% Text Node
\draw (278,174.4) node [anchor=north west][inner sep=0.75pt]    {$Z$};
\end{tikzpicture}
\caption{\label{fig:complexLagrangian}Complex Lagrangian embedding $L\subseteq W$ with Fueter disk $(D^3,S^2)\to (W,L)$ and harmonic sphere $Z$.}
\end{figure}
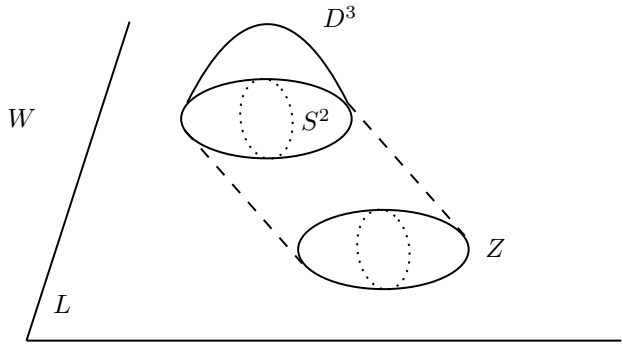

% Note that for this construction a theory of harmonic spheres is required, whereas so far we only dealt with harmonic tori. As the polysymplectic language requires a base manifold with trivial tangent bundle, harmonic spheres cannot be described by it. They can however be defined in the multisymplectic framework. This serves as a motivation for generalizing to the multisymplectic setting in \cref{sec:bundles}.
\begin{remark}\label{rem:multi}
    Note that for this construction a theory of harmonic spheres is required, whereas so far we only dealt with harmonic tori. As the polysymplectic language requires a base manifold with trivial tangent bundle, harmonic spheres cannot be described by it. They can however be defined in the multisymplectic framework, which studies the 3-form
    \[\wt{\Omega}=\omega_1\wedge dt_2-\omega_2\wedge dt_1\]
    coming from contracting $\Omega$ with the volume-form on the time-manifold (note that a primitive of this 3-form is used in the definition of the action functional in \cref{sec:actionfunctional}). The multisymplectic formalism can be defined for maps from any surface and even extends to sections of non-trivial bundles over a surface. See \cite{deLeonMethods} for the translation between the poly- and multisymplectic formalisms.
\end{remark}

\subsubsection*{Harmonic maps with boundary}
The second problem we want to highlight is the study of harmonic maps with prescribed boundary conditions. This is actually a problem that is natural to ask in the polysymplectic framework, but as we shall see holomorphic symplectic geometry will pop up naturally here. 

First, let's look again at the $d=1$ case. For a Riemannian manifold $(\mathcal{Q},g)$ one can study the existence of geodesics with prescribed boundary points $q_0,q_1\in\mathcal{Q}$. Geodesics arise as solution to the Hamiltonian equations on $T^*\mathcal{Q}$ for $H(q,p)=\frac{1}{2}|p|_g^2$. The boundary conditions amount to fixing the ends of the Hamiltonian path to the Lagrangian submanifolds $T^*_{q_0}\mathcal{Q}$ and $T^*_{q_1}\mathcal{Q}$ in $T^*\mathcal{Q}$. Existence results then follow from studying the Lagrangian Floer homology. More generally, one can look for geodesics whose boundary points are confined to some submanifolds $\mathcal{R}_1,\mathcal{R}_2\subseteq\mathcal{Q}$. On the Hamiltonian side, this imposes the condition that the Hamiltonian paths should start and end on the conormal bundles $N^*\mathcal{R}_1$ and $N^*\mathcal{R}_2$ respectively (see \cite{ASSELLE20163513}), which are also Lagrangian submanifolds. 

For $d=2$, we start with a K\"ahler manifold $(Q,i,g)$ and ask for the existence of a harmonic map $\Sigma\to Q$ with boundary $\del\Sigma$ on some submanifold $R\subseteq Q$. As seen before, the equation describing harmonic maps can be described by the complex-regularized polysymplectic Hamiltonian $H(q,p)=\frac{1}{2}|p|^2_g$ on $T^*Q$ with its standard complex structure $I$ and symplectic forms $\omega_1$ and $\omega_2$. Now, the boundary conditions translate into fixing the boundary of $Z:\Sigma\to T^*Q$ to the conormal bundle $N^*R$. Just as before, the conormal bundle is Lagrangian with respect to $\omega_1$, but depending on the choice of $R$, it may have more structure. For example, if $R\subseteq Q$ is a complex submanifold, then $N^*R\subseteq T^*Q$ is an $I$-complex submanifold that is Lagrangian with respect to both $\omega_1$ and $\omega_2$, or in other words, it is a holomorphic Lagrangian with respect to the holomorphic symplectic form. On the other hand, if $R\subseteq Q$ is a maximal totally real submanifold, then there exists a metric $\langle\cdot,\cdot\rangle$ on $T^*Q$, such that $N^*R$ is Lagrangian with respect to the symplectic forms $\langle\cdot,I\cdot\rangle$ and $\omega_1$. This implies also that $N^*R$ is $K$-complex, where $K$ is defined by $\omega_2=\langle\cdot,K\cdot\rangle$.

% Already on $Q=\R^{2n}$ this problem is non-trivial. Note that in this case $T^*\R^{2n}$ is a hyperk\"ahler space with complex structures $I,J,K$ such that $\omega_1=g(\cdot,J\cdot)$ and $\omega_2=g(\cdot,K\cdot)$. In this notation, $N^*\gamma$ is a $K$-complex $I,J$-Lagrangian. We can see this as follows. Put coordinates $(q_1,q_2,p_1,p_2)$ on $T^*\R^{2n}$ like before. Assume that locally around some point $\gamma(s)=(s,0)\in\R^{2n}$. Then $T_{\gamma(s)}N^*\gamma$ is spanned by $\frac{\del}{\del q_1}$ and $\frac{\del}{\del p_2}$. Note that the complex structures are given by
% \begin{align*}
%     I=\begin{pmatrix}
%         0&-1&0&0\\1&0&0&0\\0&0&0&1\\0&0&-1&0
%     \end{pmatrix}
%     &&
%     J=\begin{pmatrix}
%         0&0&-1&0\\0&0&0&-1\\1&0&0&0\\0&1&0&0
%     \end{pmatrix}
%     &&
%     K=\begin{pmatrix}
%         0&0&0&1\\0&0&-1&0\\0&1&0&0\\-1&0&0&0
%     \end{pmatrix}
% \end{align*}
% so that indeed $\text{span}\,\{\frac{\del}{\del q_1},\frac{\del}{\del p_2}\}$ is closed under the action of $K$, whereas $\langle\cdot,I\cdot\rangle$ and $\langle\cdot,J\cdot\rangle$ vanish.

\part{Rigidity results}\label{part:rigidity}
\section{Polysymplectic non-squeezing}\label{sec:nonsqueezing}
%Mention the 'trivial' non-squeezing result and its consequence for C^0 limits of polysymplectic maps.
The first rigidity result we want to present in this context is Gromov's non-squeezing. Note that a diffeomorphism $\psi:W\to W$ of a complex-regularized polysymplectic manifold preserves $\Omega$ if and only if it preserves both of the symplectic forms $\omega_i$. This yields the following non-squeezing result.
\begin{theorem}\label{thm:nonsqueezing}
    Let $\psi:\R^{4n}\to\R^{4n}$ be a diffeomorphism preserving the standard complex-regularized polysymplectic form on $\R^{4n}$. Also, let $B^{4n}_r\subseteq\R^{4n}$ be the ball of radius $r$ and $B^2_R\subseteq\R^2$ the ball of radius $R$ in the $(q^j_\alpha,p^j_\beta)$-plane for some $\alpha,\beta\in\{1,2\}$ and $j\in\{1,\ldots,n\}$. If 
    \[\psi(B^{4n}_r)\subseteq B^2_R\times\R^{4n-2},\]
    then $r\leq R$.
\end{theorem}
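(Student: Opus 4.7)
The plan is to reduce the statement to the classical Gromov non-squeezing theorem applied to either $\omega_1$ or $\omega_2$, exploiting the fact already highlighted at the beginning of this section: a diffeomorphism $\psi$ preserves the complex-regularized polysymplectic form $\Omega = \omega_1 \otimes \partial_1 + \omega_2 \otimes \partial_2$ if and only if it preserves each of the symplectic forms $\omega_i$ individually. Since each $\omega_i$ is a genuine symplectic form on $\R^{4n}$ (this is exactly the upgrade we get from complex regularization over the standard De Donder--Weyl polysymplectic setting, where $\eta_1$ and $\eta_2$ are degenerate), Gromov's non-squeezing applies to each of them.

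Concretely, recall from \cref{cor:Darboux} that in the standard coordinates
\begin{align*}
\omega_1 &= \sum_i (dp^i_1\wedge dq^i_1 + dp^i_2\wedge dq^i_2), \\
\omega_2 &= \sum_i (dp^i_1\wedge dq^i_2 - dp^i_2\wedge dq^i_1).
\end{align*}
For the four possible choices of $(\alpha,\beta)\in\{1,2\}^2$, inspection shows that the coordinate 2-plane $\Pi_{j,\alpha,\beta} := \text{span}\bigl\{\frac{\partial}{\partial q^j_\alpha}, \frac{\partial}{\partial p^j_\beta}\bigr\}$ is symplectic with respect to $\omega_1$ when $\alpha = \beta$ and with respect to $\omega_2$ when $\alpha\neq\beta$. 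In either case, after a coordinate rescaling/sign-flip on $\R^{4n}$ (which is itself a symplectomorphism for the relevant $\omega_i$), this $\omega_i$ is the standard symplectic form on $\R^{4n}$ and $\Pi_{j,\alpha,\beta}$ is a standard symplectic coordinate plane.

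Therefore, selecting $i\in\{1,2\}$ according to whether $\alpha = \beta$ or $\alpha\neq\beta$, one applies Gromov's non-squeezing theorem to $\psi$, viewed as a symplectomorphism of $(\R^{4n},\omega_i)$: the image $\psi(B^{4n}_r)$ is contained in the cylinder $B^2_R\times\R^{4n-2}$ with $B^2_R$ sitting in the symplectic plane $\Pi_{j,\alpha,\beta}$, forcing $r\leq R$.

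There is essentially no technical obstacle here beyond citing Gromov's theorem. The only point requiring attention is a bookkeeping check that for every pair $(\alpha,\beta)$ at least one of the two symplectic forms renders $\Pi_{j,\alpha,\beta}$ a symplectic plane on which the standard non-squeezing applies; the explicit formulas for $\omega_1,\omega_2$ above make this immediate. This is also the conceptual point the proof is meant to convey: whereas in ordinary polysymplectic geometry no such symplectic reduction exists (the $\eta_\alpha$ are degenerate), in the complex-regularized setting the non-squeezing phenomenon is inherited componentwise from each $\omega_i$.
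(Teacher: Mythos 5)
Your proof is correct and follows essentially the same route as the paper: observe that the $(q^j_\alpha,p^j_\beta)$-plane is symplectic for $\omega_1$ when $\alpha=\beta$ and for $\omega_2$ when $\alpha\neq\beta$, then apply Gromov's non-squeezing to $\psi$ regarded as a symplectomorphism for the appropriate $\omega_i$. Your extra remark about normalizing signs by a linear symplectomorphism is a harmless bookkeeping point that the paper leaves implicit.
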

\begin{proof}
    Note that the $(q^j_1,p^j_1)$ and $(q^j_2,p^j_2)$-planes are symplectic with respect to $\omega_1$ and the $(q^j_1,p^j_2)$ and $(q^j_2,p^j_1)$-planes are symplectic with respect to $\omega_2$. The result follows immediately from Gromov's non-squeezing.
\end{proof}

Even though this non-squeezing result follows immediately from the known theory for symplectic forms, it is important to note that this result definitely does not hold for general polysymplectic forms. To the authors' knowledge, this is the first version of non-squeezing that is formulated for a class of polysymplectic manifolds. It is also worth mentioning that for $n=1$, this result coincides with the lowest dimensional case of a non-squeezing conjecture by Oh for holomorphic volume preserving maps (\cite{oh2002holomorphic}).

As a consequence, it is now possible to study $C^0$-limits of polysymplectomorphisms. 

\begin{corollary}\label{cor:C0limits}
    Let $(W,I,\Omega)$ be a complex-regularized polysymplectic manifold and let $\psi_\nu:W\to W$ be a sequence of diffeomorphisms preserving $\Omega$. Assume that the $\psi_\nu$ converge to a diffeomorphism $\psi:W\to W$ in the $C^0$-topology. Then $\psi^*\Omega=\Omega$.
\end{corollary}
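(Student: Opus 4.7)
The plan is to reduce this corollary to the classical Gromov--Eliashberg theorem ($C^0$-closedness of symplectomorphism groups), applied separately to the two symplectic forms $\omega_1,\omega_2$ that make up $\Omega$. Indeed, $\psi_\nu^*\Omega=\Omega$ is equivalent to the conjunction $\psi_\nu^*\omega_1=\omega_1$ and $\psi_\nu^*\omega_2=\omega_2$, and by the proposition in \cref{sec:complexBridges} both $\omega_i$ are honest symplectic forms on $W$. So it suffices to show that, for a single symplectic form $\omega$, the relation $\phi^*\omega=\omega$ is closed in the $C^0$-topology among diffeomorphisms.

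To prove this I would argue by contradiction. If $\psi^*\omega_i\neq\omega_i$ at some point $w_0\in W$, pick Darboux coordinates (\cref{cor:Darboux}) around $w_0$ and $\psi(w_0)$ in which $\omega_i$ takes its standard form on $\R^{4n}$ and the coordinate planes $(q^j_\alpha,p^j_\beta)$ featured in \cref{thm:nonsqueezing} are the distinguished symplectic 2-planes. After translation I may assume $w_0=0$ and $\psi(0)=0$. Applying Eliashberg's linear algebra lemma to the differential $A=d\psi_0$, whose pullback of $\omega_i$ differs from $\omega_i$, yields one of the distinguished coordinate 2-planes $P$, a radius $r>0$, and $R<r$ such that the orthogonal projection of $A(B^{4n}_r)$ onto $P$ lies inside $B^2_R$.

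This linear squeezing passes to the smooth map by rescaling: for each small $\epsilon>0$ the map $x\mapsto\psi(\epsilon x)/\epsilon$ agrees with $A$ up to an $O(\epsilon)$ error, so for all sufficiently small $\epsilon$ one has $\psi(B^{4n}_{\epsilon r})\subseteq B^2_{\epsilon R'}\times\R^{4n-2}$ for some $R'$ with $R<R'<r$. Since $\psi_\nu\to\psi$ uniformly on the compact set $\overline{B^{4n}_{\epsilon r}}$ and the target cylinder is open, for all sufficiently large $\nu$ the same containment holds for $\psi_\nu$ with a slightly enlarged radius $R''$ still strictly less than $r$. Applying \cref{thm:nonsqueezing} to $\psi_\nu$ then forces $\epsilon r\le\epsilon R''$, contradicting $R''<r$.

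The main technical step is Eliashberg's linear algebra lemma, which converts a pointwise violation of the symplectic condition into a genuine squeezing of a standard ball into a thin cylinder over a coordinate symplectic plane. This is classical for a single symplectic form, and we only need to apply it once to $\omega_1$ and once to $\omega_2$; the remainder of the argument is a standard uniform-convergence extrapolation combined with Taylor's theorem.
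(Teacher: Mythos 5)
Your reduction is exactly what the paper does: since $\omega_1$ and $\omega_2$ are genuine symplectic forms and $\psi_\nu^*\Omega=\Omega$ is equivalent to $\psi_\nu^*\omega_i=\omega_i$ for $i=1,2$, the corollary is an immediate consequence of the Gromov--Eliashberg $C^0$-closedness of the symplectomorphism group, which the paper simply cites from McDuff--Salamon without reproving it.

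Where you go beyond the paper and sketch that classical theorem, there is a genuine gap. Eliashberg's linear algebra lemma does not turn ``$A^*\omega\neq\omega$'' into a squeezing of a ball into a thin cylinder; the correct dichotomy is that a linear isomorphism which is neither symplectic \emph{nor anti-symplectic} produces such a squeezing. Anti-symplectic maps (those with $A^*\omega=-\omega$) preserve the Gromov width of every set, so if $d\psi_{w_0}^*\omega_i=-\omega_i$ your non-squeezing contradiction never materializes, and the argument as written only yields $d\psi^*\omega_i=\pm\omega_i$ pointwise. The classical proof therefore has an additional step: one shows the sign is locally constant and then excludes the anti-symplectic alternative (this is handled by a separate argument in McDuff--Salamon), and only then concludes $\psi^*\omega_i=\omega_i$. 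Two smaller points: the cylinder produced by the linear lemma sits over an arbitrary $\omega_i$-symplectic $2$-plane, and normalizing it to one of the distinguished coordinate planes by an auxiliary linear $\omega_i$-symplectomorphism destroys the hypothesis of \cref{thm:nonsqueezing} (which also, as stated, concerns global diffeomorphisms of $\R^{4n}$ preserving all of $\Omega$, whereas your maps are only defined on Darboux charts from \cref{cor:Darboux}); at this stage you should invoke Gromov's non-squeezing for the single symplectic form $\omega_i$ directly, which is all the paper's own non-squeezing proof reduces to anyway.
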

\begin{proof}
    This follows immediately from the analogous statement for symplectomorphisms. For a proof see \cite{mcduff2017introduction}.
\end{proof}

\begin{remark}
    By \cref{cor:holomorphic} the limit diffeomorphism $\psi$ in \cref{cor:C0limits} is holomorphic. In general, it is already well-known that $C^0$-limits of holomorphic functions are holomorphic (see \cite{stein2003complex}).
\end{remark}

\section{The action functional and a cuplength result}\label{sec:actionfunctional}
Given a complex-regularized polysymplectic form there is a natural action functional on maps $\T^2\to W$. Assume that $\Omega=d\Theta$ is exact and let $d\V=dt_1\wedge dt_2=-\frac{1}{2i}dt\wedge d\bar{t}$ be the volume form on $\T^2$. We may contract $\Theta=\theta_1\otimes\del_1+\theta_2\otimes\del_2$ with $\dV$ to give
\[
\wt{\Theta}:=\theta_1\wedge dt_2-\theta_2\wedge dt_1\in\Lambda^2(W\times \T^2).
\]
For $Z:\T^2\to W$, define $\wt{Z}=Z\times\text{id}:\T^2\to W\times\T^2$ and
\begin{align*}
    \A(Z) &=\int_{\T^2}\wt{Z}^*\wt{\Theta}\\
    %&:= \int_{\T^2} (Z^*\theta_1\wedge \iota_{\del_1}d\V+Z^*\theta_2\wedge\iota_{\del2}d\V)\\
    &=\int_{\T^2} (\theta_1(\del_1Z)+\theta_2(\del_2Z))d\V.
\end{align*}
Let $\A_H(Z)=\A(Z)-\int_{\T^2}H(Z)d\V$. The critical points of $\A_H$ are precisely the solutions to the polysymplectic equation $dH=\Omega^\sharp(dZ)$. Note that this action functional is real-valued, making it possible to study gradient lines, whereas in holomorphic symplectic geometry one needs to make choices in order to get a real-valued action. Also, when $W=T^*Q$ and $\Omega$ is the complex-regularized polysymplectic form defined in \cref{ex:cotangent}, this action functional coincides with the one given in \cref{prop:var}.

To study gradient lines of the action functional, we need to put a metric on the space of maps from $\T^2$ to $W$. Just as in Floer theory, we will put an $L^2$-metric on this space, coming from a metric on $W$. The next proposition asserts the existence of the type of metric that we want to use.

\begin{proposition}\label{prop:metricsmooth}
    For every complex-regularized polysymplectic manifold $(W,I,\omega_1\otimes\del_1+\omega_2\otimes\del_2)$ there exists a Riemannian metric $g$ and anti-commuting almost complex structures $J$ and $K$ on $W$, such that 
    \begin{align*}
        \omega_1 &= g(\cdot,J\cdot)\\
        \omega_2 &= g(\cdot,K\cdot)\\
        K &= IJ.
    \end{align*}
\end{proposition}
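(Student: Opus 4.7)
The strategy is to reduce the construction to producing a single $\omega_1$-compatible almost complex structure $J$ that anti-commutes with the given complex structure $I$. Once such a $J$ exists, setting $g(X, Y) := \omega_1(X, JY)$ gives a Riemannian metric by the standard $\omega_1$-compatibility argument, and the formula $K := IJ$ automatically defines an almost complex structure (since $K^2 = IJIJ = -I^2J^2 = -\mathrm{id}$ using $JI = -IJ$) that anti-commutes with $J$. The remaining identity $\omega_2 = g(\cdot, K\cdot)$ then follows from $\omega_2 = -\omega_1(\cdot, I\cdot)$ by a direct calculation (up to a choice of sign convention for $K$).

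To produce $J$, I first manufacture an $I$-invariant auxiliary metric by averaging. Pick any Riemannian metric $g_0$ on $W$ and set
\[
\tilde g(X, Y) := \tfrac{1}{2}\bigl(g_0(X, Y) + g_0(IX, IY)\bigr).
\]
This is a smooth Riemannian metric satisfying $\tilde g(IX, IY) = \tilde g(X, Y)$, equivalently $I^* = -I$ with respect to $\tilde g$. Define $A : TW \to TW$ by $\omega_1(X, Y) = \tilde g(AX, Y)$. Non-degeneracy of $\omega_1$ makes $A$ fiberwise invertible, and antisymmetry makes $A$ skew-adjoint, $A^* = -A$.

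The core technical step is showing that $A$ anti-commutes with $I$. This rests on the identity $\omega_1(IX, IY) = -\omega_1(X, Y)$, which is a consequence of \cref{prop:regpolvsholsym}: because $\omega^{\C} = \omega_1 + i\omega_2$ is of type $(2, 0)$, we have $\omega^{\C}(I\cdot, I\cdot) = -\omega^{\C}$, whose real part is the desired identity. Combining this with the $I$-invariance of $\tilde g$ yields $IAI = A$, i.e.\ $IA + AI = 0$. Now polar-decompose $A = JP$ with $P := \sqrt{-A^2}$ the unique positive square root of the symmetric positive-definite operator $-A^2 = A^*A$. Standard arguments give $J^2 = -\mathrm{id}$ and $J^* = -J$, so $J$ is a $\tilde g$-orthogonal almost complex structure. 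Since $A^2$ commutes with $I$ (immediate from $AI = -IA$), so does $P$ by functional calculus, so $J = AP^{-1}$ inherits the anti-commutation relation $JI = -IJ$, as required.

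The main obstacle is precisely this last algebraic compatibility between $J$ and the prescribed complex structure $I$: a generic $\omega_1$-compatible almost complex structure has no particular relation with $I$. The two key ingredients that force it are (i) averaging the auxiliary metric over $I$, which lets us translate $\omega_1(I\cdot, I\cdot) = -\omega_1$ into the algebraic identity $IA + AI = 0$, and (ii) the type-$(2, 0)$ property of $\omega^{\C}$, which supplies that identity. Smoothness of $P$ and $J$ is automatic because $-A^2$ is uniformly positive definite, making the functional calculus smooth in the base point.
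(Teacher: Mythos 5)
Your construction is essentially the paper's own proof (see \cref{app:metric}): fix an auxiliary metric making $I$ an isometry, encode $\omega_1$ by a skew-adjoint bundle map, use the polar decomposition, and observe that the anti-commutation $AI=-IA$ (equivalently $\omega_1(I\cdot,I\cdot)=-\omega_1$, which the paper extracts directly from the antisymmetry of $\omega_2=-\omega_1(\cdot,I\cdot)$ rather than via the $(2,0)$-type statement of \cref{prop:regpolvsholsym}) passes through the square root to $J$. The only routing difference is cosmetic: the paper decomposes both $A_1=BJ$ and $A_2=BK$ and then checks $K=IJ$, while you define $K:=IJ$ and derive $\omega_2=g(\cdot,K\cdot)$ from $\omega_2=-\omega_1(\cdot,I\cdot)$; both work. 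One bookkeeping slip: with your convention $\omega_1(X,Y)=\tilde g(AX,Y)$ and $A=JP$, one gets $g(X,JY)=\tilde g(X,PJY)=\tilde g(X,AY)=-\omega_1(X,Y)$, so your $J$ has the wrong sign, and the sign cannot be pushed into $K$ since $K=IJ$ is part of the statement. The fix is to place the operator in the second slot, $\omega_1(X,Y)=\tilde g(X,A_1Y)$ with $A_1=BJ$ and $g:=\tilde g(\cdot,B\cdot)$ (as the paper does); then $\omega_1=g(\cdot,J\cdot)$ holds on the nose, and with $K:=IJ$ one computes $g(X,KY)=\omega_2(X,Y)$ with no residual sign freedom. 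Everything else, including the averaging trick to produce the $I$-invariant auxiliary metric (which the paper merely asserts exists) and the smoothness of the fiberwise construction, is fine.
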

\begin{proof}
    See \cref{app:metric}.
\end{proof}

Pick a metric $g$ on $W$ as in \cref{prop:metricsmooth}. Then the $L^2$-gradient of $\A$ is given by
\[\grad\A(Z) = J\del_1Z+K\del_2Z=:\delslash Z,\]
meaning that its negative gradient lines are described by the Fueter equation
\begin{align*}
    I\del_sZ+K\del_1 Z-J\del_2Z=0.
\end{align*}
Thus we may expect the role of holomorphic curves in symplectic geometry to be replaced by Fueter maps in complex-regularized polysymplectic geometry. The Fueter equation is the quaternionic analogue of the Cauchy-Riemann equations. Just as for holomorphic curves, there are compactness and Fredholm theories developed for Fueter maps (see e.g. \cite{HNS,walpuski2017compactness}). 

Gradient lines of the action functional above may be used to prove a cuplength result for solutions of \cref{eq:complexBridges} in the same spirit as Arnold's conjecture. 
\begin{theorem}\label{thm:cuplength}
    Let $Q=\T^{2n}$ with its standard complex structure $i$ and $W=T^*Q$ with the standard complex-regularized polsymplectic form $\Omega$. Denote elements of $W$ by $Z=(q,p)$. If $H:\T^2\times W\to \R$ is given by $H(t,q,p)=\frac{1}{2}|p|^2+h(t,q,p)$, where $h:\T^2\times W\to\R$ is a smooth function with finite $C^2$-norm, then the complex-regularized polysymplectic equation
    \begin{align}\label{eq:Ham}dH=\Omega^\#(dZ)\end{align}
    has at least $(2n+1)$ solutions. In particular, when choosing $h(t,q,p)=V(q)$, for $V:Q\to\R$, it follows that the non-linear Laplace equation
    \[-\Delta q = \nabla V(q)\]
    has at least $(2n+1)$ solutions.
\end{theorem}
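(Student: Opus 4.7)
The count $2n+1$ is exactly $\mathrm{cl}(\T^{2n})+1$, where $\mathrm{cl}$ denotes cuplength, so the natural strategy is an infinite-dimensional analogue of the Lusternik-Schnirelmann theorem applied to the action functional $\A_H$ on the space of maps $\T^2 \to T^*\T^{2n}$. Critical points of $\A_H$ are exactly the solutions of \cref{eq:Ham}, and the plan is to build a Floer-type theory whose homology is a module over $H^*(\T^{2n})$; nontriviality of the top cup power on $H^*(\T^{2n})$ will then force at least $2n+1$ generators.

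The moduli spaces to use are those of finite-energy perturbed Fueter maps $Z:\R\times\T^2\to T^*\T^{2n}$ satisfying
\[
\del_s Z + J\del_1 Z + K\del_2 Z = \nabla H(Z),
\]
with asymptotic limits at $s=\pm\infty$ equal to critical points of $\A_H$; here $g,J,K$ are the auxiliary data from \cref{prop:metricsmooth}. After a generic perturbation of $h$ (within the class of Hamiltonians with finite $C^3$-norm) I expect all critical points to be non-degenerate and all moduli spaces to be cut out transversally. The Floer chain complex is generated by these critical points, graded by a Maslov/spectral-flow index, and the differential counts isolated flow lines modulo $\R$-translation. Identifying the resulting homology with $H^*(\T^{2n})$ would proceed in two steps: (i) homotopy-invariance to reduce to a small Morse perturbation of $\tfrac{1}{2}|p|^2$, and (ii) a Morse-Bott argument localising the low-energy part of the complex on the zero section $\T^{2n}$, whose Morse theory returns $H^*(\T^{2n})$.

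The main analytic obstacle, and the reason the proof is deferred to \cref{sec:proofArnold,sec:Fredholm,sec:Compactness}, is the compactness theory for these Fueter moduli spaces. Concretely I would first prove a uniform $C^0$-bound on flow lines: the fibrewise quadratic growth of $H$ forces $|p|$ to be controlled by the action, and the $C^3$-bound on $h$ prevents this estimate from degenerating. With a $C^0$-bound in hand, the remaining compactness failure is bubbling by rescaling limits, giving non-constant finite-energy Fueter maps on $\R^3$ or related non-compact domains; ruling these out, or at least showing they do not affect the moduli spaces of index $0$ and $1$ needed for the differential and the cuplength argument, is the most delicate step. Here the results of \cite{HNS,walpuski2017compactness} on Fueter compactness are the main technical input, combined with an energy-threshold argument exploiting that $T^*\T^{2n}$ has vanishing $\pi_2$.

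Once the transversality, Fredholm (index given by the spectral flow of the Hessian of $\A_H$, \cref{sec:Fredholm}) and compactness (\cref{sec:Compactness}) packages are in place, the construction of the Floer ring via a pair-of-pants product and the identification with $H^*(\T^{2n})$ should follow along classical lines. The cuplength estimate then produces $\mathrm{cl}(\T^{2n})+1 = 2n+1$ geometrically distinct critical points of $\A_H$, hence $2n+1$ solutions of \cref{eq:Ham}; the quadratic special case $H(q,p)=\tfrac{1}{2}|p|^2+V(q)$ immediately gives $2n+1$ solutions of $-\Delta q=\nabla V(q)$.
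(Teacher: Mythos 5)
Your route --- build a full Floer homology for $\A_H$, equip it with a product/module structure over $H^*(\T^{2n})$, and run a Lusternik--Schnirelmann argument --- is genuinely different from the paper's proof, and it contains gaps that are not merely technical. The most serious is the ring/module structure: a pair-of-pants product needs a version of the Floer equation on a surface-with-ends domain, but here the ``Floer cylinder'' is already three-dimensional, $\R\times\T^2$, and the equation $\del_sZ+J\del_1Z+K\del_2Z=\nabla H(Z)$ uses the quaternionic triple $(I,J,K)$ in an essential way; no analogue of the pair-of-pants construction is known for such Dirac-type equations, and nothing in \cref{sec:Fredholm,sec:Compactness} supplies one. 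Likewise the full Floer package you invoke (integer grading by spectral flow, $\del^2=0$ via gluing, continuation invariance, a Morse--Bott comparison with the zero section) is not established for this equation and is exactly what the paper's strategy is designed to avoid. Your bubbling discussion also overstates what is available: Fueter compactness in the sense of \cite{walpuski2017compactness} is much weaker than Gromov compactness (the bubbling locus can have codimension one), so an ``energy threshold plus $\pi_2(T^*\T^{2n})=0$'' argument is not known to protect even the index-$0$ and index-$1$ moduli spaces.

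The paper instead argues at chain level, following \cite{schwarz1998quantum,albers2016cuplength}: after the $C^0$-bounds (\cref{lem:Linfty}, \cref{cor:cutoffHam}, \cref{lem:Floerbounds}) it never defines a homology. It studies the single parametrized moduli space $\M$ of \cref{eq:ModuliSpace}, in which the nonlinearity is switched on by $\beta_r$ only on an interval of length $(2n+1)r$, evaluates solutions at the points $R,2R,\dots,2nR$ against stable and unstable manifolds of auxiliary Morse functions, and shows the resulting operation $\theta_R$ is chain homotopic to $\theta_0$, which realizes the cup product; nonvanishing of the $2n$-fold cup product on $\T^{2n}$ makes these moduli spaces nonempty for all $R=m$. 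Stretching $m\to\infty$ and using the $C^\infty_{\loc}$-compactness of \cref{thm:compactness} --- which needs only the energy bound of \cref{lem:Energybound}, the Heinz trick and ellipticity, with no bubbling analysis, no gluing, no grading and no nondegeneracy of critical points --- yields limit trajectories whose asymptotic actions strictly decrease, giving the $2n+1$ distinct solutions. To make your proposal work you would first have to construct the missing product/module structure and the full gluing--grading package for this Fueter-type Floer theory, a problem substantially harder (and currently open) than the statement being proven.
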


In the next sections, \cref{thm:cuplength} will be proven using a Floer theoretic argument along the lines of \cite{schwarz1998quantum,albers2016cuplength}. Note that similar results have been proven in \cite{ginzburg2012hyperkahler,ginzburg2013arnold} for closed target manifolds that arise as smooth compact quotients of a vector space. Also for three-dimensional time manifolds similar results have been proven in \cite{HNS} (for closed target manifolds) and \cite{hyperkahlerrookworst} (for cotangent bundles of tori). In the latter article, the proof was given as a corollary of results from \cite{ginzburg2012hyperkahler} after establishing the necessary $C^0$-bounds. This proof does not use Floer theoretic arguments. In the present article we aim to establish a connection to the Floer theory of these systems and will prove the necessary Fredholm and compactness results (see \cref{sec:Fredholm,sec:Compactness} respectively).

In \cref{thm:cuplength}, we restrict to cotangent bundles of tori for simplicity and because it already highlights much of the necessary analysis. The reason to prove this theorem using Floer theory though is that we expect this proof to generalize to cotangent bundles of more general K\"ahler manifolds. As noted in \cite{HNS}, the bubbling off analysis becomes more involved than it is in the $d=1$ case, since we now have to deal with compactness of Fueter curves. A different strategy for proving the existence of solutions to the non-linear Laplace equation, would be to apply Morse theory to the Lagrangian $\L(q)=\int_{\T^2}\left(\frac{1}{2}|dq|^{2}-V(q)\right)\dV$. However, note that $\L$ is defined on $H^1(\T^2,Q)$ which is not a Banach manifold for general $Q$. Therefore, Morse theory is not suitable to prove these existence result, whereas the proof using Floer curves is not affected by this. 

\begin{remark}
    Note that in \cref{thm:cuplength} it is essential to study real-valued Hamiltonians rather then holomorphic Hamiltonian (i.e. currents). To the authors' knowledge there is no version of the Arnold conjecture for holomorphic Hamiltonian systems. Observe in particular that the maximum principle prohibits the existence of bounded holomorphic non-linearities.
\end{remark}

% Note that in the action functional above it is not a primitive of the polysymplectic form that is integrated but rather a primitive of the associated multisymplectic form 
% \begin{align}\label{eq:multiform}\wt{\Omega}_{std}:=\omega_1\wedge dt_2 -\omega_2\wedge dt_1\end{align}
% on the product space $\T^2\times W$. \Cref{sec:bundles} is dedicated to complex-regularized multisymplectic geometry. It allows us to consider surfaces $M$ that do not have trivial tangent bundles. The main focus of the last section lies on the proof of a non-squeezing result in the multisymplectic setting. Note that in the polysymplectic setting non-squeezing is trivial as the complex-regularized polysymplectomorphisms preserve both $\omega_1$ and $\omega_2$, meaning that we get a non-squeezing result directly from the symplectic version. As we will see, the multisymplectic setting is a lot less trivial, since it allows us also to reparameterize time.

\section{$C^0$-bounds and Floer maps}\label{sec:proofArnold}
First, we will prove the necessary $C^0$-bounds. The proofs are analogous to the ones given in \cite{hyperkahlerrookworst}, but as the setting is slightly different, the proofs will briefly be repeated below. We denote 
\begin{align*}
    \mathcal{P}(H)&=\{Z:\T^2\to T^*\T^{2n}\mid Z\textup{ is nullhomotopic and satisfies \cref{eq:Ham}}\}\\
    \mathcal{P}_a(H)&=\{Z\in\mathcal{P}(H)\mid a\geq \A_H(Z)\}.
\end{align*}

\begin{lemma}\label{lem:Linfty}
    Let $H_{t}(Z)=\frac{1}{2}|p|^2+h_t(Z)$, where $h$ is smooth, real-valued and has finite $C^2$-norm. Then for any $a\in\R$ the set $\mathcal{P}_a(H)$ is bounded in $L^\infty$. 
\end{lemma}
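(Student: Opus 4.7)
The plan is to exploit the Dirac form of the Hamiltonian equation together with elliptic regularity on $\T^2$. Since the base $Q=\T^{2n}$ is compact, the $q$-component of any solution $Z=(q,p)$ is automatically uniformly bounded, so it suffices to obtain a uniform $L^\infty$-bound on $p$. Writing out (\ref{eq:Ham}) using (\ref{eq:complexBridges}) and the splitting $H=\frac{1}{2}|p|^2+h$, every $Z\in\mathcal{P}(H)$ satisfies
\[
    -2\del_{\bar t}p = \nabla_q h(t,Z), \qquad 2\del_t q = p+\nabla_p h(t,Z),
\]
and by hypothesis $|h|,|\nabla h|,|\nabla^2 h|\leq M:=\|h\|_{C^2}$.

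First I would derive an $L^2$-bound on $p$ from the action bound. Using the primitive $\Theta=\lambda_1\otimes\del_1-\lambda_2\otimes\del_2$ from \cref{ex:cotangent}, the same computation that appeared in \cref{prop:var} gives $\int_{\T^2}(\theta_1(\del_1Z)+\theta_2(\del_2Z))\dV = \int_{\T^2} p\cdot(2\del_t q)\dV$, so on solutions of (\ref{eq:Ham}) one has
\[
    \A_H(Z) = \int_{\T^2}\bigl(p\cdot 2\del_t q - H\bigr)\dV = \int_{\T^2}\Bigl(\tfrac{1}{2}|p|^2+p\cdot\nabla_p h - h\Bigr)\dV.
\]
Combining $\A_H(Z)\leq a$ with $|h|,|\nabla_p h|\leq M$ and Young's inequality yields $\|p\|_{L^2(\T^2)}^2\leq C_1(a,M)$.

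To upgrade this to an $L^\infty$-bound I would use that the first equation above, $\del_{\bar t}p=-\tfrac{1}{2}\nabla_q h$, has right-hand side bounded in $L^\infty(\T^2)$ by $M/2$ independently of $Z$. Standard elliptic estimates for $\del_{\bar t}$ on the two-torus give, for every $1<r<\infty$,
\[
    \|p-\bar p\|_{W^{1,r}(\T^2)} \leq C_r\,\|\del_{\bar t}p\|_{L^r(\T^2)},
\]
where $\bar p$ denotes the mean of $p$. Taking $r=2$ together with the $L^2$-bound on $p$ (and hence on $\bar p$) first gives a uniform $W^{1,2}$-bound on $p$; Sobolev embedding $W^{1,2}\hookrightarrow L^s$ for all $s<\infty$ then upgrades this to uniform $L^s$-bounds for every finite $s$. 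Feeding this back into the elliptic estimate with an $r>2$ yields a uniform $W^{1,r}$-bound, and a final application of the Sobolev embedding $W^{1,r}(\T^2)\hookrightarrow L^\infty(\T^2)$ delivers the desired conclusion.

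The feature that makes this strategy clean is that the right-hand side of the Dirac equation for $p$ depends only on $h$ and not on $p$ itself, so its $L^\infty$-norm is controlled directly by $\|h\|_{C^1}$. I expect this to be the main obstacle to extending the bound (and hence the Floer-theoretic proof of \cref{thm:cuplength}) beyond cotangent bundles of tori: on a curved target, Christoffel-type terms quadratic in $p$ would enter the equation for $\del_{\bar t}p$, and the bootstrap above would have to be replaced by something more delicate, e.g.\ a Moser-type iteration coupling the elliptic estimates to the action bound.
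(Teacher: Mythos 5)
Your argument is correct, and its first half (the uniform $L^2$-bound on $p$ extracted from the action bound, plus the observation that $q$ is automatically bounded because the target is $\T^{2n}$) is essentially identical to the paper's. Where you genuinely diverge is in the upgrade from $L^2$ to $L^\infty$: the paper stays in the $L^2$-Sobolev scale, using the equation (and an integration by parts that needs $Z$ nullhomotopic) to bound $\|dZ\|_{L^2}$, then repeating the argument once more to get an $H^2$-bound --- which is where the $C^2$-norm of $h$ enters --- and finally invoking $H^2(\T^2)\hookrightarrow L^\infty$. You instead exploit that the $p$-component of \cref{eq:complexBridges} reads $\del_{\bar t}p=-\tfrac{1}{2}\nabla_q h(t,Z)$ with right-hand side pointwise bounded by $\|h\|_{C^1}$, and apply Calder\'on--Zygmund estimates for $\del_{\bar t}$ on $\T^2$ together with $W^{1,r}(\T^2)\hookrightarrow L^\infty$ for $r>2$; the mean value of $p$ is controlled by the $L^2$-bound. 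This route is sound and in fact slightly more economical: it only uses the $C^1$-norm of $h$ (the paper's hypothesis of a finite $C^2$-norm is what its $H^2$-bootstrap consumes), and your intermediate $r=2$/Sobolev step is even redundant, since the right-hand side lies in $L^r$ for every $r$ from the start, so you may take $r>2$ immediately. What the paper's $L^2$-bootstrap buys in exchange is that it produces uniform $H^1$- and $H^2$-bounds on all of $Z$ (not just an $L^\infty$-bound on $p$) with only Fourier/Hilbert-space tools, which matches the elementary-Fourier spirit of \cref{sec:Fredholm}; your closing remark about curvature terms quadratic in $p$ obstructing the argument on general K\"ahler targets is accurate and applies equally to both proofs.
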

\begin{proof}
    We start by proving that $\mathcal{P}_a(H)$ is bounded in $L^2$. Recall that for solutions of \cref{eq:Ham} we get $2\del_t q=\frac{\del H}{\del p}=p+\frac{\del h}{\del p}$. Therefore, if $Z$ is a solution, we get
    \begin{align*}
        \A_H(Z) &= \int_{\T^2} \left( \langle p,p+\frac{\del h_t}{\del p}(Z)\rangle -H_{t}(Z)\right)\dV\\
        &=\int_{\T^2}\left( \frac{1}{2}|p|^2 +\langle p,\frac{\del h}{\del p}(Z)\rangle-h_t(Z)\right)\dV\\
        &\geq c_0||p||_{L^2}^2-c_1,
    \end{align*}
    for some $c_0>0$ and $c_1\geq 0$. The last inequality comes from the boundedness of $h$ and its derivatives. The assumption $a\geq\A_H(Z)$ now yields a uniform $L^2$-bound on $p$. As $q$ maps into $\T^{2n}$ it is $L^2$-bounded by construction. So indeed $\mathcal{P}_a(H)$ is bounded in $L^2$. 

    We now proceed by a bootstrapping argument. For $Z\in\mathcal{P}_a(H)$ we have\footnote{In the first line we use partial integration, using that $Z$ is nullhomotopic.}
    \begin{align*}
        ||dZ||^2_{L^2} &= \int_{\T^2}\left(|2\del_t q|^2+|2\del_{\bar{t}} p|^2\right)\dV\\
        &=\int_{\T^2}\left(\left|\frac{\del H}{\del p}\right|^2+\left|\frac{\del H}{\del q}\right|^2\right)\dV\\
        &\leq ||p||_{L^2}^2 + \int_{\T^2}\left(\left|\frac{\del h}{\del q}\right|^2+\left|\frac{\del h}{\del p}\right|^2\right)\dV.
    \end{align*}
    As the first derivatives of $h$ are bounded and $p$ is uniformly bounded in $L^2$ this induces a uniform $L^2$-bound on $dZ$. Therefore $\mathcal{P}_a(H)$ is bounded in $H^1$. A similar argument shows that the $H^1$-norm of $dZ$ is bounded by the $H^1$-norm of $Z$ and the $C^2$-norm of $h$. Thus, we get that $\mathcal{P}_a(H)$ is bounded in $H^2$. As $2\cdot 2>2$ we get boundedness in $L^\infty$ by Sobolev's embedding theorem. 
\end{proof}

From \cref{lem:Linfty} it follows that we can actually use a cut-off version of $h$. Let $\chi_\rho:[0,\infty)\to\R$ a smooth cut-off function such that $\chi_\rho\equiv 1$ on $[0,\rho-1]$ and $\chi_\rho\equiv 0$ on $[\rho,\infty)$. We may assume there is a bound on $||\chi_\rho||_{C^2}$ independent of $\rho$. Define the cut-off Hamiltonian as $\tilde{H}^\rho_{t}(Z):=\frac{1}{2}|p|^2+\tilde{h}^\rho_{t}(Z):=\frac{1}{2}|p|^2+\chi_\rho(|p|)h_{t}(Z)$, which is smooth for $\rho>1$
\begin{corollary}\label{cor:cutoffHam}
    For any $a\in\R$ there exists some $\rho>0$ such that $\mathcal{P}_a(H)=\mathcal{P}_a(\tilde{H}^\rho)$.
\end{corollary}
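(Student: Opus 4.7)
The plan is to apply \cref{lem:Linfty} separately to $H$ and to $\tilde{H}^\rho$ in order to bound $\mathcal{P}_a(H)$ and $\mathcal{P}_a(\tilde{H}^\rho)$ in $L^\infty$, and then to choose $\rho$ so large that along every such solution the cutoff is inactive, making the two Hamiltonian equations (and their actions) indistinguishable.

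For the inclusion $\mathcal{P}_a(H)\subseteq\mathcal{P}_a(\tilde{H}^\rho)$, \cref{lem:Linfty} applied to $H$ furnishes a constant $C_0=C_0(a,h)$ with $\|p\|_\infty\leq C_0$ for every $(q,p)\in\mathcal{P}_a(H)$. Choose any $\rho$ with $\rho-1>C_0^2$. Then $|p(t)|^2<\rho-1$ pointwise along every such $Z$, so $\chi_\rho(|p|^2)\equiv 1$ and $\chi_\rho'(|p|^2)\equiv 0$ along $Z$; consequently $\tilde{H}^\rho$ and $H$ coincide together with their first derivatives on the image of $Z$, and $Z$ satisfies the cutoff Hamiltonian equation with the same action, so $Z\in\mathcal{P}_a(\tilde{H}^\rho)$.

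For the reverse inclusion $\mathcal{P}_a(\tilde{H}^\rho)\subseteq\mathcal{P}_a(H)$ one invokes \cref{lem:Linfty} again, now with the nonlinearity $\tilde{h}^\rho$: since $\tilde{h}^\rho$ is smooth with finite $C^2$-norm (being compactly supported in $p$), the lemma yields an $L^\infty$-bound $K(\rho)$ on $\mathcal{P}_a(\tilde{H}^\rho)$. As soon as $K(\rho)\leq\sqrt{\rho-1}$ the cutoff is once more inactive along any $Z\in\mathcal{P}_a(\tilde{H}^\rho)$, so $Z$ in fact solves the equation for $H$ with the same action and lies in $\mathcal{P}_a(H)$.

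The main technical point is arranging $K(\rho)\leq\sqrt{\rho-1}$ for some $\rho$. A priori $\|\tilde{h}^\rho\|_{C^2}$ grows with $\rho$ through the derivatives of $\chi_\rho$: for instance $\partial_p^2\tilde{h}^\rho$ contains a term $4\chi_\rho''(|p|^2)(p\otimes p)\,h$ which, with a fixed transition length, is of order $\rho$ on the annulus $|p|^2\in[\rho-1,\rho]$, and a direct bootstrap in the style of the proof of \cref{lem:Linfty} yields only $K(\rho)=O(\rho^{3/2})$. This is circumvented by taking $\chi_\rho$ to be the rescaling $\chi_\rho(s)=\psi(s/\rho)$ of a single fixed smooth function $\psi:[0,\infty)\to\R$ with $\psi\equiv 1$ near $0$ and $\psi\equiv 0$ on $[1,\infty)$: the factors $1/\rho$ and $1/\rho^2$ picked up by $\chi_\rho'$ and $\chi_\rho''$ precisely cancel the factors $|p|\leq\sqrt{\rho}$ and $|p|^2\leq\rho$ available on the support, so $\|\tilde{h}^\rho\|_{C^2}$ is bounded uniformly in $\rho$. \cref{lem:Linfty} then produces a $\rho$-independent constant $K$, and any $\rho$ for which $\{\chi_\rho=1\}\supseteq\{|p|^2\leq K^2\}$ simultaneously closes both inclusions.
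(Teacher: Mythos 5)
Your proof is correct, and it is in fact more complete than what the paper records: the corollary is stated there as an immediate consequence of \cref{lem:Linfty}, which really only yields the inclusion $\mathcal{P}_a(H)\subseteq\mathcal{P}_a(\tilde{H}^\rho)$ --- your first paragraph, which matches the intended argument. The reverse inclusion is where you go beyond the paper, and the circularity you flag is genuine: with the cutoff as literally defined (transition of fixed width $1$ on $[\rho-1,\rho]$), the term $2\chi_\rho'(|p|^2)\,|p|^2\,h$ makes $\|\tilde{h}^\rho\|_{C^2}$, and hence the coercivity constant and the $L^\infty$-bound produced by \cref{lem:Linfty}, grow with $\rho$, so one cannot simply quote the lemma for $\tilde{H}^\rho$ and conclude the cutoff is inactive. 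Your rescaled family $\chi_\rho(s)=\psi(s/\rho)$ repairs this and gives a $\rho$-independent bound, closing both inclusions. One caveat: for large $\rho$ your cutoff no longer satisfies the paper's normalization $\chi_\rho\equiv1$ on $[0,\rho-1]$ (a width-$1$ transition forces $|\chi_\rho'|\geq1$ somewhere, so that normalization is incompatible with uniform $C^2$-bounds), so you should state explicitly that you are redefining the cutoff family; this is harmless, since the only properties used later are $\chi_\rho\equiv0$ on $[\rho,\infty)$ (in \cref{lem:Floerbounds}) and $\chi_\rho\equiv1$ on a ball containing all relevant orbits, both of which your family retains. As an aside, with the paper's original cutoff one can still show $|p|^2\leq\rho$ for every solution of the cutoff equation by a maximum-principle argument (on $\{|p|^2>\rho\}$ the momentum is holomorphic, so $|p|^2$ is subharmonic), but this does not push $|p|^2$ below $\rho-1$, so it does not by itself substitute for your uniform estimate.
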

\begin{proof}
    Note that $\tilde{h}^\rho$ has uniformly bounded $C^2$-norm. The proof of \cref{lem:Linfty} shows that there are constants $C,\wt{C}>0$ such that for any $Z\in\mathcal{P}_a(H)$ and $\wt{Z}^\rho\in\mathcal{P}_a(\tilde{H}^\rho)$, we have
    \begin{align*}
        ||Z||_{L^\infty}<C && ||\wt{Z}^\rho||_{L^\infty}<\wt{C}
    \end{align*}
    for any $\rho>1$. Choosing $\rho>\max\{C,\wt{C}\}+1$, we get that $\mathcal{P}_a(H)=\mathcal{P}_a(\tilde{H}^\rho)$.    
\end{proof}

\subsection*{Floer maps}
Now that we have the necessary bounds on the orbits, we turn to Floer maps. A Floer map is defined as a negative $L^2$-gradient flow line of the action and is given by
\begin{align}\label{eq:Floer}
\del_sZ+J\del_1Z+K\del_2Z=\nabla \wt{H}^\rho(Z)
\end{align}
where we have taken the Hamiltonian to be $\tilde{H}^\rho$ in view of \cref{cor:cutoffHam}. Note that the standard metric on $T^*\T^{2n}$ is compatible with both $\omega_1$ and $\omega_2$ and yields
\begin{align*}
    J=\begin{pmatrix}
        0 & -\Id_{2n}\\ \Id_{2n} & 0
    \end{pmatrix} && K=\begin{pmatrix}
        0 & -i\\ -i & 0
    \end{pmatrix}.
\end{align*}
Therefore the Floer maps are given by
\begin{align*}
    \del_s q-2\del_{\bar{t}}p&=\frac{\del \wt{H}^\rho}{\del q}\\
    \del_sp +2\del_t q&=\frac{\del \wt{H}^\rho}{\del p}.
\end{align*}

The following lemma shows that the image of a Floer map actually sits within a compact subset of $T^*\T^{2n}$ (compare with Lemma 3.4 from \cite{hyperkahlerrookworst}).
\begin{lemma}\label{lem:Floerbounds}
    Let $Z:\R\times\T^2\to T^*\T^{2n}$ be a solution to \cref{eq:Floer} which converges to critical points of $\A_{\tilde{H}^\rho}$ for $|s|\to\infty$. Then $|p(s,t)|\leq \rho$ for any $(s,t)\in\R\times\T^2$.
\end{lemma}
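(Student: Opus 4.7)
The plan is to use a maximum principle on $|p|^2$ in the region where the cut-off in $\tilde{H}^\rho$ is active. The key observation is that on the open set $U := \{(s,t) : |p(s,t)|^2 > \rho\}$ one has $\chi_\rho(|p|^2) \equiv 0$ together with $\chi_\rho'(|p|^2) \equiv 0$, so $\tilde{H}^\rho$ reduces to $\tfrac{1}{2}|p|^2$ and the Floer equations \cref{eq:Floer} become
\begin{align*}
    \del_s q = 2\del_{\bar t}\, p, \qquad \del_s p = p - 2\del_t q.
\end{align*}
Differentiating the second in $s$ and substituting the first gives $\del_s^2 p = \del_s p - 4\del_t\del_{\bar t}\, p$, so the full Laplacian $\Delta_{s,t} = \del_s^2 + 4\del_t\del_{\bar t}$ satisfies $\Delta_{s,t} p = \del_s p$ on $U$. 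A Leibniz computation then yields $(\Delta_{s,t} - \del_s)|p|^2 = 2|\nabla_{s,t} p|^2 \geq 0$, so $|p|^2$ is a subsolution on $U$ of a second-order linear elliptic operator with no zeroth-order term.

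Before invoking the strong maximum principle, I would first control the limits $Z_\pm = \lim_{s\to\pm\infty}Z(s,\cdot)$, which are critical points of $\A_{\tilde{H}^\rho}$ on $\T^2$. Applying the same algebra with the $\del_s$-terms dropped, on the region $\{|p_\pm|^2 > \rho\} \subseteq \T^2$ the map $p_\pm$ is $t$-holomorphic and $|p_\pm|^2$ is subharmonic. If this region were a proper non-empty open subset of $\T^2$, the weak maximum principle would give $|p_\pm|^2 \leq \rho$ there, which is impossible by definition of the region. If it were all of $\T^2$, then $p_\pm$ would be globally holomorphic on the compact complex manifold $\T^2$ and hence constant; the relation $p_\pm = 2\del_t q_\pm$ integrated over $\T^2$ (using that $q_\pm$ is nullhomotopic and so lifts to a periodic map into $\R^{2n}$) would force $p_\pm \equiv 0$, contradicting $|p_\pm|^2 > \rho$. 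Hence $|p_\pm|^2 \leq \rho$ on $\T^2$.

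Assume now for contradiction that $M := \sup_{(s,t)}|p(s,t)|^2 > \rho$. By the end bound just established and the $C^0$-convergence of $Z(s,\cdot)$ to $Z_\pm$, there exists $S > 0$ with $|p(s,t)|^2 < M$ whenever $|s| > S$, so the supremum is attained at some $(s_0,t_0) \in [-S,S]\times\T^2 \subseteq U$. Hopf's strong maximum principle, applied to the subsolution $|p|^2$ on the connected component $U_0 \ni (s_0,t_0)$, forces $|p|^2 \equiv M$ on $U_0$. Continuity of $|p|^2$ together with $|p|^2 = \rho$ on $\del U_0$ rules out any non-empty boundary, while $U_0 = \R\times\T^2$ would contradict the end estimate. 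Therefore $M \leq \rho$, proving the lemma.

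The main obstacle, compared with the standard Floer-theoretic use of the maximum principle on a compact domain, is the non-compactness of the Floer cylinder in the $s$-direction; it is circumvented by first carrying out the stationary version of the same computation on the compact manifold $\T^2$ to obtain the end bound $|p_\pm|^2\leq\rho$, and only then running the parabolic-type strong maximum argument on $\R\times\T^2$.
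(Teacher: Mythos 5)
Your proposal is correct and takes essentially the same route as the paper: on the region where $|p|^2>\rho$ the cut-off Hamiltonian is exactly $\tfrac12|p|^2$, one derives $(\del_s^2+\Delta)p=\del_s p$ and hence that $|p|^2$ is a subsolution of $\del_s^2+\Delta-\del_s$, and the maximum principle together with the behaviour as $|s|\to\infty$ excludes $\sup|p|^2>\rho$. In fact your write-up is more complete than the paper's two-line conclusion, since you explicitly establish the end bound $|p_\pm|^2\le\rho$ for the asymptotic critical points (via holomorphicity of $p_\pm$ and the nullhomotopy of $q_\pm$) and carry out the strong-maximum-principle argument on a connected component, steps the paper leaves implicit.
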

\begin{proof}
    Suppose $|p(s_0,t_0)|>\rho$ for some $(s_0,t_0)\in\R\times \T^2$. Locally around $(s_0,t_0)$ the Hamiltonian reduces to $\tilde{H}^\rho(Z(s,t))=\frac{1}{2}|p(s,t)|^2$. Therefore, on this neighborhood
    \begin{align*}
        \del_s^2 p &=\del_s p - 2\del_s\del_{t}{q}\\
        &= \del_s p - 4\del_t\del_{\bar{t}} p.
    \end{align*}
    It follows that 
    \begin{align*}(\del_s^2 +\Delta)p = \del_s^2 p + 4\del_t\del_{\bar{t}}p=\del_s p.\end{align*}

    Define $\phi=\frac{1}{2}|p|^2$. Then
    \begin{align*}
        (\del_s^2+\Delta)\phi &\geq \del_s \phi.
    \end{align*}
    The maximum principle now tells us that $\phi$ cannot attain a maximum in the neighbourhood of $(s_0,t_0)$, contradicting the assumption that the Floer map converges as $|s|\to\infty$. 
\end{proof}

Having established the $C^0$-bounds we can prove \cref{thm:cuplength}. The necessary Fredholm and compactness results will be deferred to \cref{sec:Fredholm,sec:Compactness}. For the proof we will follow the lines of the paper \cite{albers2016cuplength}. 

\begin{proof}[Proof of \cref{thm:cuplength}]
To start the proof, assume there are finitely many solutions to \cref{eq:Ham}, since otherwise we would be done. Then their action is necessarily bounded. \Cref{cor:cutoffHam} implies that we may replace $H$ by $\tilde{H}^\rho$ for some $\rho$ and by \cref{lem:Floerbounds} also the Floer maps will take value in some compact set $\T^{2n}\times B\subseteq T^*\T^{2n}$. Let $M\subseteq C^\infty(\T^2,\T^{2n}\times B)$ be the subset of nullhomotopic maps. Just like in \cite[\pp 3.1]{albers2016cuplength} we define a smooth family of functions $\beta_r:\R\to[0,1]$ for $r\geq 0$ such that 
\begin{itemize}
    \item $\beta_r(s)=0$ for $s\leq -1$ and $s\geq (2n+1)r+1$ for all $r\geq 0$,
    \item $\beta_r|_{[0,(2n+1)r]}\equiv 1$ for $r\geq 1$,
    \item $0\leq \beta_r'(s)\leq 2$ for $s\in(-1,0)$ and \\$0\geq \beta_r'(s)\geq -2$ for $s\in((2n+1)r,(2n+1)r+1)$,
    \item $\lim_{r\to 0^+}\beta_r=0$ in the strong topology on $C^\infty$.
\end{itemize}
See \cite{albers2016cuplength} for the precise conditions on $\beta_r$ and \cref{fig:beta} for an idea of what they look like. We will use the functions $\beta_r$ to interpolate between the quadratic Hamiltonian with and without non-linearity. To this extent define
\[
\tilde{H}^{\rho,r}_{s,t}(Z)=\frac{1}{2}|p|^2+\beta_r(s)\tilde{h}^\rho_{t}(Z).
\]
One can see the Hamiltonian $\tilde{H}^{\rho,r}$ as the quadratic Hamiltonian $H^0(Z)=\frac{1}{2}|p|^2$ with the non-linearity $\tilde{h}^\rho$ switched on just on an interval whose length is determined by $r$. Correspondingly, we have the action $G_{r,s}:=\A_{\tilde{H}^{\rho,r}}$, interpolating between the actions $\A_{H^0}$ and $\A_{\tilde{H}^\rho}$. 

\begin{figure}[h]
    \centering
    \includegraphics[width=0.8\linewidth]{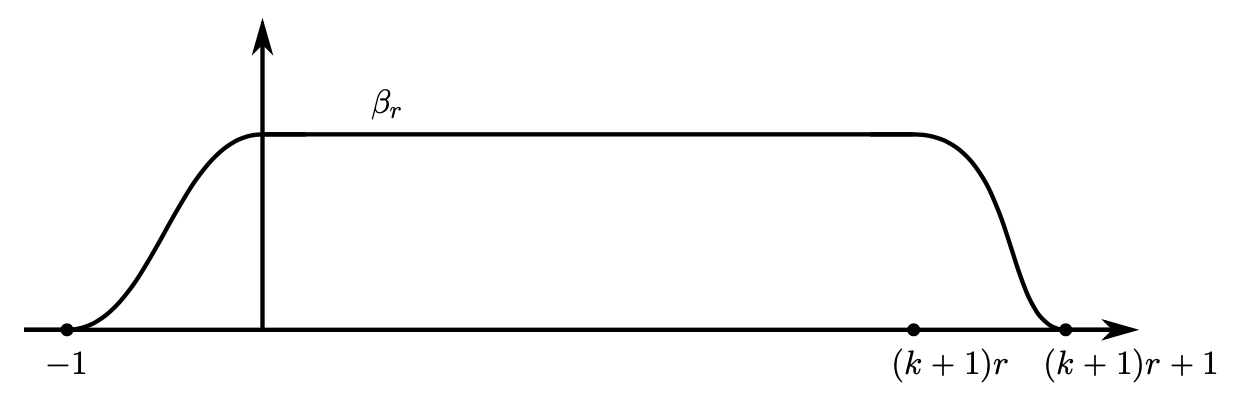}
    \caption{The functions $\beta_r$. Source: \cite{albers2016cuplength}. In our case we take $k=\textup{cl}(\T^{2n})=2n$.}
    \label{fig:beta}
\end{figure}

With this setup we can introduce the moduli space
\begin{align}\label{eq:ModuliSpace}
    \M = \{(r,Z)\mid r\geq 0, Z:\R\to M, (\star 1)-(\star3)\},
\end{align}
where $(\star1)-(\star3)$ are given below. Note that we can view $Z$ as a contractible map $\R\times\T^2\to\T^{2n}\times B$. The condition $(\star1)$ is that $Z$ is a negative gradient line of $G_{r,s}$, meaning a Floer map for $\tilde{H}^{\rho,r}$. Condition $(\star2)$ is finiteness of energy
\begin{align}
    \tag{$\star2$} E(Z) = \int_{-\infty}^\infty\int_{\T^2}|\del_s Z(s,t)|^2\dV\,ds <+\infty
\end{align}
and the final condition is convergence of the $p$-coordinates
\begin{align}
    \tag{$\star3$} p(s,t)\to 0\textup{ as }|s|\to\infty.
\end{align}
We also introduce the subspaces
\begin{align*}
    \M_{[0,R]}(\T^{2n})&:=\{(r,Z)\in\M\mid 0\leq r\leq R\textup{ and }\lim_{s\to\pm\infty}Z(s)\in\T^{2n}\times\{0\}\}\\
    \M_R(\T^{2n})&:=\{Z\mid (R,Z)\in\M_{[0,R]}(\T^{2n})\},
\end{align*}
where $\T^{2n}\times\{0\}\subseteq M$ denotes the set of constant maps. This coincides with the set of critical points of $\A_{H^0}$.  Note that for $R=0$ we get that $\M_0(\T^{2n})$ is equal to this set of critical points. 

It follows from \cref{thm:manifold} and \cref{thm:compactness} that $\M$ is a relatively compact finite-dimensional manifold, possibly after a small perturbation of $h$. Also the spaces $\M_{[0,R]}(\T^{2n})$ and $\M_R(\T^{2n})$ will then be compact manifolds (again after possibly perturbing $h$, see \cite{albers2016cuplength}). We have an evaluation map
\begin{align*}
    \textup{ev}_R:\M_R(\T^{2n})&\to (\T^{2n}\times B)^{2n}\\
    Z&\mapsto (Z(R,t_0),Z(2R,t_0),\ldots,Z(2nR, t_0)),
\end{align*}
where $t_0\in\T^2$ is some basepoint. For $R=0$ this gives the diagonal embedding $\T^{2n}\hookrightarrow (\T^{2n}\times B)^{2n}$.

The rest of the proof goes analogously to \cite{albers2016cuplength}. Take Morse functions $f_1,\ldots,f_{2n},f_*$ on $\T^{2n}$ and extend to Morse functions $\bar{f}_\alpha$ for $\alpha=1,\ldots,2n$ on $\T^{2n}\times B$ using a positive definite quadratic form on a tubular neighbourhood of $\T^{2n}$ in the direction normal to $\T^{2n}$. We also choose Riemannian metrics $g_1,\ldots,g_{2n},g_*$ on $\T^{2n}\times B$. For critical points $x_\alpha\in\T^{2n}$ of $f_\alpha$ and $x_*^\pm\in\T^{2n}$ of $f_*$ we define
\[
\M(R,x_1,\ldots,x_{2n},x_*^-,x_*^+):=\left\{Z\in\M_R(\T^{2n})\middle\vert (\star4)-(\star6)\right\},
\]
where
\begin{align}
    \tag{$\star4$} \textup{ev}_R&\in\prod_{\alpha}W^s(x_\alpha,\bar{f}_\alpha,g_\alpha)\\
    \tag{$\star5$} \lim_{s\to-\infty}Z(s)&\in W^u(x_*^-,f_*,g_*)\\ 
    \tag{$\star6$} \lim_{s\to+\infty}Z(s)&\in W^s(x_*^+,f_*,g_*).
\end{align}
Here, $W^u$ and $W^s$ denote the unstable and stable manifolds of the critical points for the gradient flows of the indicated Morse functions. Again, by choice of perturbations, we may assume these are all smooth manifolds. Note that $\M(0,x_1,\ldots,x_{2n},x_*^-,x_*^+)=\bigcap_{\alpha}W^s(x_\alpha,f_\alpha,g_\alpha)\cap W^u(x_*^-,f_*,g_*)\cap W^s(x_*^+,f_*,g_*)$.

Define the following operation on Morse co-chain groups
\begin{align*}
\theta_R&:\bigotimes_\alpha CM^*(f_\alpha)\otimes CM_*(f_*)\to CM_*(f_*)\\
x_1\otimes\cdots\otimes x_{2n}\otimes x_*^-&\mapsto \sum_{x^+_*\in\textup{Crit}(f_*)}\#_2\M(R,x_1,\ldots,x_{2n},x_*^-,x_*^+)\cdot x^+_*.
\end{align*}
Here, $\#_2$ denotes the parity of the set if it is discrete and 0 if it not. For generic choices of Morse functions and Riemannian metrics, the map $\theta_0$ realizes the usual cup-product in cohomology (see \cite{cupproduct}). This map does not vanish as the cuplength of $\T^{2n}$ is $2n$. Moreover, $\theta_R$ is chain homotopic to $\theta_0$ and can therefore not vanish either. Thus there have to be critical points $x_\alpha$ of $f_\alpha$ and $x_*^\pm$ of $f_*$ of index at least 1 such that the moduli spaces $\M(m,x_1,\ldots,x_{2n},x_*^-,x_*^+)$ are non-empty for all $m\in\N$. 

Let $Z_m\in\M(m,x_1,\ldots,x_{2n},x_*^-,x_*^+)$ and 
\[
Z^{(\alpha)}=\lim_{m\to\infty}Z_m(\cdot+m\alpha),
\]
for $\alpha=1,\ldots,2n$.
By the compactness discussed in \cref{sec:Compactness} these limits exist in $C^\infty_{loc}(\R,M)$. The $Z^{(\alpha)}$ satisfy the Floer \cref{eq:Floer} for the Hamiltonian $\tilde{H}^\rho$
and they converge to critical points $y^\pm_{\alpha}$ of $\A_{\tilde{H}^\rho}$ for $s\to\pm\infty$ such that
\[
\A_{\tilde{H}^\rho}(y_1^-)\geq \A_{\tilde{H}^\rho}(y_1^+)\geq \A_{\tilde{H}^\rho}(y_2^-)\geq \A_{\tilde{H}^\rho}(y_2^+)\geq\ldots\geq \A_{\tilde{H}^\rho}(y_{2n}^-)\geq \A_{\tilde{H}^\rho}(y_{2n}^+).
\] 
The only thing left to do is show that at least $2n+1$ of these points are different, by showing that the inequalities $\A_{\tilde{H}^\rho}(y_\alpha^-)>\A_{\tilde{H}^\rho}(y_\alpha^+)$ are strict. 

Note that since $\textup{Crit}(\A_{\tilde{H}^\rho})$ is finite by assumption, we may assume critical points of $\A_{\tilde{H}^\rho}$ do not intersect the stable manifolds $W^s(x_\alpha,\bar{f}_\alpha,g_\alpha)$. If $\A_{\tilde{H}^\rho}(y_\alpha^-)=\A_{\tilde{H}^\rho}(y_\alpha^+)$, then $Z^{(\alpha)}$ is constant and therefore a critical point of $\A_{\tilde{H}^\rho}$. Also note that by definition $Z_m(m\alpha)\in W^s(x_\alpha,\bar{f}_\alpha,g_\alpha)$ for any $m$. Therefore $Z^{(\alpha)}(0)=\lim_{m\to\infty}Z_m(m\alpha)$ lies in the closure of a stable manifold, which itself is a union of stable manifolds. This is in contradiction with $Z^{(\alpha)}$ being a critical point of $\A_{\tilde{H}^\rho}$. Thus indeed the inequalities $\A_{\tilde{H}^\rho}(y_\alpha^-)>\A_{\tilde{H}^\rho}(y_\alpha^+)$ are strict and we get that $y_1^-,y_2^-,\ldots,y_{2n}^-$ and $y_{2n}^+$ form $2n+1$ different critical points of $\A_{\tilde{H}^\rho}$. These correspond to $2n+1$ distinct solutions of \cref{eq:Ham} 
\end{proof}

% Also other moduli space $\M(C_1,\ldots,C_d)=\{(r,Z)\in\M\mid (\star4)\}$
% \begin{align}
%     \tag{$\star 4$}Z(jr,t_0)\in C_j\times {0}\textup{ for }j=1,\ldots,d\\
% \end{align}

\section{Fredholm theory and transversality}\label{sec:Fredholm}
In this section we will prove the following theorem.
\begin{theorem}\label{thm:manifold}
    The moduli space $\M$ from \cref{eq:ModuliSpace} is a finite-dimensional manifold for generic choice of $h$.
\end{theorem}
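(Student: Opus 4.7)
The plan is to realize $\M$ as the zero locus of a smooth Fredholm section of a Banach bundle, compute its index to be $1$, and then use a Sard-Smale argument to achieve transversality by generic perturbation of $h$.

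First, I would set up the functional-analytic framework. Work on the cylinder $\R\times\T^2$ and introduce an exponentially weighted Sobolev completion $\mathcal{B}_\delta$ of maps $Z=(q,p):\R\times\T^2\to T^*\T^{2n}$ whose $p$-component decays like $e^{-\delta|s|}$ at $|s|\to\infty$ and whose $q$-component is asymptotic to constants $q_\pm\in\T^{2n}$; the asymptotic data $(q_-,q_+)\in\T^{2n}\times\T^{2n}$ provide the finite-dimensional parameters needed to make $\mathcal{B}_\delta$ a smooth Banach manifold. The Floer equation $(\star1)$ together with $(\star2)$--$(\star3)$ presents $\M$ as the zero locus of
\begin{align*}
\F:[0,\infty)\times\mathcal{B}_\delta\to\mathcal{E}_\delta,\quad \F(r,Z)=\del_sZ+J\del_1Z+K\del_2Z-\nabla\wt{H}^{\rho,r}(Z),
\end{align*}
where $\mathcal{E}_\delta\to[0,\infty)\times\mathcal{B}_\delta$ is the Banach bundle whose fiber over $(r,Z)$ is the weighted $L^p$-completion of sections of $Z^*T(T^*\T^{2n})$. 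The cutoff $\chi_\rho$ plays no role on the zero set thanks to \cref{lem:Floerbounds}.

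Next I would establish the Fredholm and index properties. The vertical linearization $D_Z\F$ is a lower-order perturbation of $\del_s+\delslash$ on the cylinder, a Dirac-type operator whose asymptotic operators at $s\to\pm\infty$ are the formal Hessians of $\A_{H^0}$ at the constant loops $q_\pm$. Choosing $\delta>0$ below the first positive spectral gap of these asymptotic operators, and treating the Morse-Bott directions tangent to $\T^{2n}$ as already absorbed into the asymptotic parameters $(q_-,q_+)$ of $\mathcal{B}_\delta$, the standard APS-type weighted Sobolev theory yields the Fredholm property. A spectral-flow computation at $r=0$, where the homotopy is trivial, shows that the Fredholm index of $D_Z\F$ equals $0$ at fixed $r$, so the parametric section $\F$ has index $1$.

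The main obstacle is the transversality step. Let $h$ vary in a separable Banach space $\X$ of perturbations with finite $C^3$-norm, concretely a Floer-style $\epsilon$-space of smooth functions on $\T^2\times T^*\T^{2n}$, and form the universal zero set $\M^{\textup{univ}}=\{(r,Z,h):\F(r,Z;h)=0\}$. The key point is that the combined linearization $D_Z\F\oplus\del_h\F$ is surjective at every $(r_0,Z_0,h_0)\in\M^{\textup{univ}}$; equivalently, no nonzero $L^q$-section $\xi$ in the cokernel of $D_Z\F$ can annihilate the image of $\del_h\F$. For this, I would pick an injective point $(s_0,t_0)$ of $Z_0$, that is, a point where $Z_0$ avoids its own image in a neighborhood, and insert a localized bump perturbation of $h$ supported there; pairing against $\xi$ forces $\xi(s_0,t_0)=0$, after which unique continuation for the formal adjoint of $D_Z\F$ yields $\xi\equiv0$. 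Injective points exist along any non-constant trajectory by a standard genericity argument, while constant trajectories occur only at $r=0$ and are handled by the Morse data on $\T^{2n}$ that is already built into the cuplength setup in \cref{sec:proofArnold}. Sard-Smale applied to the projection $\M^{\textup{univ}}\to\X$ then yields a residual set of $h$ for which $\M$ is a smooth manifold of dimension equal to the Fredholm index of $\F$, namely $1$.
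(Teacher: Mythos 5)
Your overall strategy (realizing $\M$ as the zero locus of a Fredholm section and applying Sard--Smale to a universal moduli space) is the same as the paper's, but your functional-analytic setup does not deliver the stated index, and this is a genuine error rather than a matter of taste. With an exponential weight $\delta>0$ and the two asymptotic constants $q_\pm\in\T^{2n}$ added as free parameters, the linearization at fixed $r$ has index $-\dim\ker(\delslash-P)+2\cdot 2n=-2n+4n=2n$, since the asymptotic operator $\delslash-P$ on $\T^2$ has kernel exactly the $2n$ constant $q$-directions; your parametric index is then $2n+1$, not $1$. (Consistently, at $h=0$ and fixed $r$ the solution set with free asymptotics is the whole $2n$-torus of constants.) So the claim that a spectral-flow computation at $r=0$ gives index $0$ at fixed $r$ is incompatible with your own Banach manifold $\mathcal{B}_\delta$, and your conclusion that $\M$ is $1$-dimensional does not follow from your setup. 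The paper gets index $0$ at fixed $r$ precisely by working on the unweighted spaces $H^k(\R\times\T^2,\R^{4n})$ with no asymptotic parameters: the elementary Fourier argument of \cref{lem:bijection} shows $D=\del_s+\delslash-P$ is a bijection there --- the would-be kernel consists of constants, which are not square-integrable on $\R\times\T^2$ --- and $\beta_r(s)S_Z$ is a compact perturbation, so $(d\F)_{(r,Z)}$ has index $1$. Either drop the asymptotic parameters and weights (in which case no APS-type theory is needed at all), or you are computing the dimension of a different object than the one in \cref{thm:manifold}.

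The transversality step also has a gap. You restrict the perturbations to $s$-independent Hamiltonians $h_t(Z)$ and therefore need somewhere-injective points of the trajectory to localize the perturbation in the target; you call their existence ``a standard genericity argument'', but already in ordinary Hamiltonian Floer theory this is a nontrivial theorem (Floer--Hofer--Salamon), and no analogue has been established for the three-variable Fueter-type Floer equation appearing here, so it cannot be invoked as standard (nor is the claim that non-constant-in-$s$ behaviour fails only at $r=0$ justified). The paper sidesteps this entirely by enlarging the perturbation class to compactly supported $h_{s,t}$ depending on $s$ as well as $t$ (this is stated explicitly after the definition of $\wt{\M}$): a cokernel element $W$ lies in $\ker\left((D^2_Z)^*\right)$, and outside the strip $[-1,(2n+1)r+1]\times\T^2$ one has $(D^2_Z)^*=-\del_s+\delslash-P$, which satisfies a maximum principle forcing $W$ to be nonzero somewhere inside the strip; an $(s,t)$-localized variation $G$ supported there then gives $\langle W,D^1_hG\rangle\neq0$, with no injectivity hypothesis needed. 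If you insist on $s$-independent perturbations you must actually prove an injective-point lemma for these Floer maps; otherwise adopt the paper's larger perturbation space and its maximum-principle argument.
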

As is usual in Floer theory, this will be proven by describing $\M$ as the zero set of a Fredholm map. The results are standard, so we will not go into full detail. However, we would like to prove the core step, to stress that in our setting elementary Fourier arguments can be used. 

We let $\F$ denote the Floer map
\[\F(r,Z)=\F^r(Z)=\del_s Z +\delslash Z - \nabla\tilde{H}^{\rho,r}(Z).\]
This means that $\M=\F^{-1}(0)$, when $\F$ is viewed as a map on the right function spaces. We start by proving $\F$ is a nonlinear Fredholm map.

%\subsection*{Linearization of the Floer map}
Let $r$ be fixed. Then the linearization of $\F^r$ at $Z$ is given by
\begin{align}\label{eq:linFloer}
(d\F^r)_Z = D-\beta_r(s)S_Z,
\end{align}
where $D=\del_s+\delslash-P$,
\begin{align*}
    P &= \begin{pmatrix}0&0&0&0\\0&0&0&0\\0&0&1&0\\0&0&0&1\end{pmatrix} &&\textup{and}&& S_Z=\textup{Hess}_Z\tilde{h}^\rho.
\end{align*}
By $H^k$ we denote the Sobolev space $W^{k,2}$. Note that for $k>\frac{3}{2}$ we have an embedding $H^k(\R\times\T^2,\R^{4n})\hookrightarrow C^0(\R\times\T^2,\R^{4n})$.
\begin{lemma}\label{lem:bijection}
    The operator $D:H^k(\R\times\T^2,\R^{4n})\to H^{k-1}(\R\times\T^2,\R^{4n})$ is a bijection for $k>\frac{3}{2}$.
\end{lemma}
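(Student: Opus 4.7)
The plan is Fourier analysis. I would Fourier-transform in $s\in\R$ and Fourier-expand in $t\in\T^2$ to realise $D$ as a matrix-valued Fourier multiplier $\hat D(\sigma,k)$ on $\R\times\Z^2$, so that bijectivity of $D:H^k\to H^{k-1}$ reduces to pointwise invertibility of $\hat D$ together with the symbol estimate $\|\hat D(\sigma,k)^{-1}\|_{op}\lesssim \langle\xi\rangle^{-1}$ for $\xi=(\sigma,k)$, from which Plancherel produces a bounded two-sided inverse.

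Computing the symbol: using the fiberwise complex structure $I$ to identify $\R^{4n}\cong\C^n\oplus\C^n$, the Dolbeault operators $2\partial_t$ and $2\partial_{\bar t}$ act on the mode $e^{2\pi i k\cdot t}\zeta$ (with $\zeta\in\C^n$) by complex multiplication with $\mu:=2\pi(k_2+ik_1)$ and $-\bar\mu$ respectively. Combined with $\partial_s\leftrightarrow i\sigma$ and the block structure of $P=\mathrm{diag}(0,\Id)$, this gives
\[
\hat D(\sigma,k)=\begin{pmatrix}i\sigma\,\Id_n & \bar\mu\,\Id_n \\ \mu\,\Id_n & (i\sigma-1)\,\Id_n\end{pmatrix},\qquad \det\hat D(\sigma,k)=-(\sigma^2+|\mu|^2)-i\sigma.
\]
The real part of $\det\hat D$ vanishes only at $(\sigma,k)=(0,0)$ and the imaginary part only on $\sigma=0$, so $\hat D$ is pointwise invertible except at the isolated frequency $(0,0)$. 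For modes with $|k|\geq 1$, the lower bound $|\mu|^2\geq 4\pi^2$ gives $|\det\hat D|\gtrsim \langle\xi\rangle^2$, which together with the adjugate bound $\|\mathrm{adj}\,\hat D\|\lesssim \langle\xi\rangle$ yields $\|\hat D(\sigma,k)^{-1}\|\lesssim \langle\xi\rangle^{-1}$; by Plancherel this handles the full contribution of $D$ restricted to the $t$-nonzero modes.

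The main obstacle is the zero $t$-mode $k=0$. There the symbol reduces to $\hat D(\sigma,0)=\mathrm{diag}(i\sigma\,\Id_n,\,(i\sigma-1)\,\Id_n)$: the $p$-block inverse $(i\sigma-1)^{-1}$ is harmless and uniformly $\lesssim \langle\sigma\rangle^{-1}$ thanks to the Hessian structure $P=\mathrm{diag}(0,\Id)$, but the $(q,q)$-block inverse $1/(i\sigma)$ is singular as $\sigma\to 0$, so a crude operator-norm estimate on the symbol fails at low frequencies. Completing the bijectivity argument amounts to treating this one-dimensional piece of the problem separately: on the $k=0$ mode the equation decouples into $\partial_s Y_q=W_q$ and $(\partial_s-1)Y_p=W_p$ on $\R$, and one must carefully verify that these solve uniquely in the Sobolev pair $(H^k,H^{k-1})$ — the second equation being bijective by the $(i\sigma-1)$ symbol, and the first requiring a tailored analysis of $\partial_s$ compatible with the norms in play. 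This low-frequency step is the analytic heart of the lemma; the hypothesis $k>3/2$ enters only through $H^k\hookrightarrow C^0$, needed later in the nonlinear Fredholm theory rather than in this linear bijection itself.
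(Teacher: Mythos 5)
Your strategy is the same one the paper uses: view $D$ as a constant-coefficient Fourier multiplier on $\R\times\T^2$, check pointwise invertibility of the symbol away from the zero frequency, and convert a symbol bound $\|\hat D^{-1}\|\lesssim\langle\xi\rangle^{-1}$ into boundedness of the inverse via Plancherel. Your computation of the symbol and determinant agrees (up to the complex block form and $2\pi$ normalizations) with the paper's $\det\hat D=(m_1^2+m_2^2+\xi^2+i\xi)^2$, and your estimate on the modes $k\neq0$ parallels the paper's eigenvalue bound for $m_1^2+m_2^2>N$ (the finitely many remaining nonzero modes being uniformly invertible). So the agreement is complete except at the point you yourself flag.

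The problem is that the step you defer is not a verification that can be ``completed carefully'': it fails as stated. On the zero $t$-mode the $q$-block of the equation is $\del_s Y_q=W_q$ on $\R$, and $\del_s:H^k(\R)\to H^{k-1}(\R)$ is injective but \emph{not} surjective; its symbol $i\sigma$ vanishes at $\sigma=0$, so the range is dense but not closed. Concretely, $W_q(s)=e^{-s^2}v$ lies in $H^{k-1}(\R)$, yet every antiderivative tends to two different constants as $s\to\pm\infty$ and hence is not in $L^2$, so no $Y\in H^k(\R\times\T^2,\R^{4n})$ solves $DY=W$ for the $t$-independent datum $W=(e^{-s^2}v,0)$. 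Thus the ``low-frequency step'' you call the analytic heart cannot be closed by a tailored analysis of $\del_s$ in the unweighted pair $(H^k,H^{k-1})$; some additional input is required (exponential weights in $s$, restricting the target, or using the asymptotic/degenerate-asymptotics structure of the actual Floer problem). For comparison, the paper's own argument defines $\hat Y=\hat D^{-1}\hat W$ away from the single frequency $(0,0,0)$ and only estimates the projection onto the modes $m_1^2+m_2^2>N$, so it too leaves the $\xi\to0$ singularity of the zero $t$-mode untreated; your write-up at least isolates the difficulty, but as submitted it does not establish the claimed bijection.
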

\begin{proof}
    For $Y\in H^k:=H^k(\R\times\T^2,\R^{4n})$ consider the Fourier expansion
    \begin{align*}
        Y(s,t)&=\int\sum_{m_1,m_2\in\Z}\hat{Y}(\xi,m_1,m_2)e^{i\xi s}e^{im_1t_1}e^{im_2t_2}\,d\xi.
    \end{align*}
    Then on the Fourier coefficients the operator $D$ acts as 
    \begin{align*}
        \hat{D}(\xi,m_1,m_2) = i\xi + i\begin{pmatrix}0&0&-m_1&m_2\\0&0&-m_2&-m_1\\m_1&m_2&0&0\\-m_2&m_1&0&0\end{pmatrix}-P.
    \end{align*}
    We compute $\det(\hat{D}(\xi,m_1,m_2))=(m_1^2+m_2^2+\xi^2+i\xi)^2$. This means only constant functions can be in the kernel of $D$, however the only constant function that is square-integrable on $\R\times\T^2$ is the zero function. So $\ker D = 0$. 

    Now, for surjectivity let $W\in H^{k-1}$ and define 
    \begin{align*}
        \hat{Y}(\xi,m_1,m_2)=\begin{cases}\hat{D}(\xi,m_1,m_2)^{-1}\hat{W}(\xi,m_1,m_2) & (\xi,m_1,m_2)\neq (0,0,0)\\
        0 & (\xi,m_1,m_2)= (0,0,0).\end{cases}
    \end{align*}
    Note that by the calculation of the determinant above $\hat{D}(\xi,m_1,m_2)^{-1}$ exists. Now by definition we have that $DY=W$ provided $Y\in H^k$. To see this, note that the eigenvalues of $\hat{D}(\xi,m_1,m_2)$ are
    \[\lambda^\pm(\xi,m_1,m_2) = \frac{1}{2}i\left(i+2\xi\pm i\sqrt{1+4m_1^2+4m_2^2}\right).\]
    For $m_1^2+m_2^2$ large enough the norms of these eigenvalues can be bounded from below. In other words, there exists some $N$ such that for $m_1^2+m_2^2>N$, we have
    \begin{align*}
       |\lambda^\pm(\xi,m_1,m_2)|^2 \geq \xi^2+\frac{1}{2}m_1^2+\frac{1}{2}m_2^2.
    \end{align*}
    Let $\textup{pr}$ denote the projection onto the subspace where $m_1^2+m_2^2>N$. Then the $H^k$-norm of $\textup{pr}\,Y$ is
    \begin{align*}
        ||\textup{pr}\,Y||_{H^k}^2 &= \int\sum_{m_1^2+m_2^2>N}(1+\xi^2+m_1^2+m_2^2)^k|\hat{Y}(\xi,m_1,m_2)|^2\,d\xi\\
        &\leq \int\sum_{m_1^2+m_2^2>N}\frac{(1+\xi^2+m_1^2+m_2^2)^k}{\xi^2+\frac{1}{2}m_1^2+\frac{1}{2}m_2^2}|\hat{W}(\xi,m_1,m_2)|^2\,d\xi\\
        &\leq |||W||_{H^{k-1}}^2\sup_{\substack{m_1^2+m_2^2>N\\ \xi\in\R}}\frac{1+\xi^2+m_1^2+m_2^2}{\xi^2+\frac{1}{2}m_1^2+\frac{1}{2}m_2^2}<+\infty.
    \end{align*}
    Here, the third line follows from H\"olders inequality. Thus $Y\in H^k$, proving that $D$ is surjective. 
\end{proof}
\begin{corollary}\label{cor:floerfredholm}
    The Floer map $\F$ is a Fredholm map.
\end{corollary}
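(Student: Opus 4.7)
The plan is to reduce Fredholmness of $\F$ to \cref{lem:bijection} together with a compactness argument for the Hessian term, and then to account for the $r$-parameter as a one-dimensional extension.

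First I would fix $r \geq 0$ and analyse the partial linearization $(d\F^r)_Z = D - \beta_r(s) S_Z$ as an operator $H^k(\R \times \T^2, \R^{4n}) \to H^{k-1}(\R \times \T^2, \R^{4n})$ for $k > 3/2$. By \cref{lem:bijection}, $D$ is an isomorphism, hence Fredholm of index $0$. Since the Fredholm index is stable under compact perturbations, it suffices to show that $\beta_r(s) S_Z$ is compact as an operator $H^k \to H^{k-1}$. For this I would use that $\beta_r$ has compact support $[-1,(2n+1)r+1]$ in $s$ and that $S_Z = \textup{Hess}\,\tilde h^\rho(Z)$ is uniformly bounded in $(s,t)$, because $h$ has finite $C^3$-norm. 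Multiplication by $\beta_r(s) S_Z$ therefore maps $H^k$ continuously into $H^k$-functions supported in the compact cylinder $K = [-1,(2n+1)r+1] \times \T^2$, and the inclusion $H^k(K) \hookrightarrow H^{k-1}(K)$ is compact by the Rellich-Kondrachov theorem. Composing gives the desired compactness, so $(d\F^r)_Z$ is Fredholm of index $0$.

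To finish, I would let $r$ vary, so that the full linearization at $(r, Z) \in [0,\infty) \times H^k$ takes the form
\[
(d\F)_{(r,Z)}(\dot r, Y) = -\dot r \, \tfrac{d\beta_r(s)}{dr}\, \nabla \tilde h^\rho(Z) + (d\F^r)_Z Y,
\]
a bounded linear map $\R \oplus H^k \to H^{k-1}$. The extension of $(d\F^r)_Z$ by $0$ on the $\dot r$-factor is Fredholm of index $0 + 1 = 1$ (the kernel picks up the full $\R$-summand, while the cokernel is unchanged), and the first summand above, being of rank at most one, is a compact perturbation. Hence $(d\F)_{(r,Z)}$ is Fredholm of index $1$, as claimed. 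The main substantive ingredient is the compactness of the Hessian perturbation, which rests on the compact support of $\beta_r$ and the $C^3$-bound on $h$; the subtlety of $r$ lying at the boundary of $[0,\infty)$ is harmless and merely reflects the fact that $\M$ will inherit the structure of a $1$-manifold with boundary at $r = 0$.
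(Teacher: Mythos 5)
Your proposal is correct and follows essentially the same route as the paper: decompose $(d\F^r)_Z = D - \beta_r(s)S_Z$, invoke \cref{lem:bijection} to get an index-$0$ Fredholm operator via a compact perturbation, and then observe that letting $r$ vary raises the index to $1$. You simply supply details the paper leaves implicit (the Rellich--Kondrachov argument for compactness of $\beta_r(s)S_Z$ using its compact support in $s$, and the finite-rank nature of the $\partial_r$-term), which is consistent with the paper's intent.
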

\begin{proof}
    Note that the operator $\beta_r(s)S_Z$ is compact as a map $H^k\to H^{k-1}$. Thus, by \cref{eq:linFloer} and \cref{lem:bijection} the differential of $\F^r$ and therefore of $\F$ is Fredholm.\footnote{Note that in our definition of $\M$ we require the $p$-coordinates to converge to zero, but we are not constraining the $q$-coordinates. For a rigorous setup of the function spaces in Morse-Bott scenarios see \cite{frauenfelder2004arnold}. The core step in proving Fredholmness still lies in \cref{lem:bijection}.}
\end{proof}

\subsection*{Transversality}
Left to prove is that for generic choice of non-linearity the differential of the Floer map is surjective. Combined with \cref{cor:floerfredholm} this yields \cref{thm:manifold}. 

Note that the moduli space $\M$ depends on the chosen non-linearity $\beta_r(s)\tilde{h}^\rho_t$. By varying these non-linearities for each $l$, we get the universal moduli space
\[\wt{\M}:=\bigcup_{h\in\S}\M(h_{s,t}),\]
where $\S=C_c^l(\R\times \T^2\times \T^{2n}\times B,\R)$ is the space of compactly supported smooth functions $h_{s,t}$ on $\T^{2n}\times B\subseteq T^*\T^{2n}$. We denote by $\M(h_{s,t})$ the moduli space $\M$ of \cref{eq:ModuliSpace}, where the non-linearity $\beta_r(s)\tilde{h}^\rho_t$ is replaced by $h_{s,t}$. Note that in this case $h_{s,t}$ is not necessarily the product of a function in $s$, with a time-dependent non-linearity on $\T^{2n}\times B$.

This universal moduli space can be seen as the zero set of the map
\[\wt{\F}(r,Z,h)=\wt{\F}^r(Z,h)=\del_s Z +\delslash Z - \nabla H^h_{s,t}(Z),\]
where $H^h_{s,t}(Z)=\frac{1}{2}|p|^2+h_{s,t}(Z)$ and $h$ vanishes for $s\notin [-1,(2n+1)r+1]$. The result would follow from Sard-Smale's theorem, if we know that the differential of $\wt{\F}^r$ is surjective at every $(Z,h)\in\wt{\M}$. Let $(Z,h)\in\wt{\M}$ and take a variation $(Y,G)\in H^k(\R\times \T^2,\R^{4n})\times C^l_c(\R\times\T^2\times\T^{2n}\times B,\R)$. Then the linearization of $\wt{\F}^r$ at $(Z,h)$ can be decomposed as 
\begin{align*}
    (d\wt{\F}^r)_{(Z,h)}\cdot(Y,G) = D^1_h G +D^2_Z Y.
\end{align*}
Assume we have some non-zero $W\in H^{k-1}(\R\times \T^2,\R^{4n})$ in the orthogonal complement of the image of $(d\wt{\F}^r)_{(Z,h)}$. Then $\langle W,D^2_Z Y\rangle=0$ for all $Y$, so that $W\in \ker \left((D^2_Z)^*\right)$. Note that for $s\notin[-1,(2n+1)r+1]$ the non-linearity $h_{s,t}$ vanishes so that $(D^2_Z)^*=-\del_s+\delslash-P$. This operator satisfies a maximum principle, similar to the one used in \cref{lem:Floerbounds}. Therefore, by standard arguments $W(s,t)\neq 0$ for some $(s,t)\in [-1,(2n+1)r+1]\times \T^2$. Thus, we can find some variation $G$ with support for $s\in[-1,(2n+1)r+1]$, such that $\langle W,D^1_h G\rangle \neq 0$. This contradicts the assumption that $\langle W,(d\wt{\F}^r)_{(Z,h)}\cdot(Y,G)\rangle=0$ for any $(Y,G)$. We must conclude $(d\wt{\F}^r)_{(Z,h)}$ is surjective. This finishes the proof of \cref{thm:manifold}.\qed

\section{Compactness}\label{sec:Compactness}
In this section we will prove compactness of the moduli space (\ref{eq:ModuliSpace}) that is needed in the proof of \cref{thm:cuplength}. We start by proving a uniform energy bound, using a standard argument.
\begin{lemma}\label{lem:Energybound}
    For all $(r,Z)\in\mathcal{M}$ we have 
    \begin{align*}
        E(Z)\leq 2||\tilde{h}^\rho||_{\textup{Hofer}}:=2\int_{\T^2}\left(\sup_{Z'\in T^*\T^{2n}}\tilde{h}^\rho_t(Z')-\inf_{Z'\in T^*\T^{2n}}\tilde{h}^\rho_t(Z')\right)\dV.
    \end{align*}
\end{lemma}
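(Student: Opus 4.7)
The plan is to run the standard Floer-theoretic energy identity. Since $Z$ is a negative $L^2$-gradient trajectory of the $s$-dependent action $G_{r,s}=\A_{\tilde{H}^{\rho,r}_s}$, differentiating $G_{r,s}(Z(s))$ along the flow and using $\del_s Z = -\grad G_{r,s}(Z)$ gives
\begin{align*}
\frac{d}{ds}G_{r,s}(Z(s)) \;=\; dG_{r,s}(Z)[\del_s Z] + (\del_s G_{r,s})(Z) \;=\; -\|\del_s Z\|_{L^2}^2 \,-\, \beta_r'(s)\int_{\T^2}\tilde{h}^\rho_t(Z(s,t))\dV.
\end{align*}
Integrating over $s\in\R$ and using $(\star 2)$ to secure absolute convergence of the energy integral, one obtains
\begin{align*}
E(Z) \;=\; -\bigl[G_{r,s}(Z(s))\bigr]_{-\infty}^{+\infty} \,-\, \int_{-\infty}^{+\infty}\beta_r'(s)\int_{\T^2}\tilde{h}^\rho_t(Z(s,t))\dV\,ds.
\end{align*}

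Next I would argue that the two asymptotic values of $G_{r,s}(Z(s))$ vanish. For $|s|\ge (2n+1)r+1$ we have $\beta_r(s)=0$, so $G_{r,s}(Z) = \A(Z) - \int_{\T^2}\tfrac{1}{2}|p|^2\dV$ at the ends. Condition $(\star 3)$ gives $p(s,\cdot)\to 0$ pointwise, and this can be upgraded to uniform convergence by an elliptic bootstrap analogous to the one used in \cref{lem:Linfty} (the Floer equation reduces to an elliptic system with vanishing non-linearity for $|s|$ large, yielding uniform control on the $t$-derivatives of $Z(s,\cdot)$). Since the Liouville forms $\theta_1,\theta_2$ vanish on the zero section, the integrand of $\A(Z(s))$ is linear in $p$, so both summands of $G_{r,s}(Z(s))$ tend to zero as $|s|\to\infty$. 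Thus the boundary contribution is zero and
\begin{align*}
E(Z) \;=\; -\int_{-\infty}^{+\infty}\beta_r'(s)\int_{\T^2}\tilde{h}^\rho_t(Z(s,t))\dV\,ds.
\end{align*}

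The final step bounds this integral by the Hofer norm. The key observation is that $\int_{-\infty}^{+\infty}\beta_r'(s)\,ds = 0$, so the integral is unchanged if we replace $\tilde{h}^\rho_t(Z)$ by $\tilde{h}^\rho_t(Z)-c(t)$ for any fibrewise constant $c(t)$. Choosing the midpoint
$c(t) = \tfrac{1}{2}\bigl(\sup_{Z'}\tilde{h}^\rho_t(Z') + \inf_{Z'}\tilde{h}^\rho_t(Z')\bigr)$,
one has the pointwise bound $|\tilde{h}^\rho_t(Z)-c(t)| \le \tfrac{1}{2}\bigl(\sup_{Z'}\tilde{h}^\rho_t(Z') - \inf_{Z'}\tilde{h}^\rho_t(Z')\bigr)$. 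Because $|\beta_r'|\le 2$ on the two transition intervals of length one and vanishes elsewhere, $\int_{\R}|\beta_r'(s)|\,ds \le 4$. Combining these estimates gives
\begin{align*}
E(Z) \;\le\; \int_{\R}|\beta_r'(s)|\,ds \cdot \tfrac{1}{2}\int_{\T^2}\Bigl(\sup_{Z'}\tilde{h}^\rho_t(Z')-\inf_{Z'}\tilde{h}^\rho_t(Z')\Bigr)\dV \;\le\; 2\,\|\tilde{h}^\rho\|_{\textup{Hofer}}.
\end{align*}

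The main obstacle in this plan is the boundary vanishing in step two: $(\star 3)$ alone only controls $p$, so turning this into vanishing of $\A(Z(\pm\infty))$ requires a uniform bound on $\del_t q$ at the ends. I expect this to follow readily from an elliptic bootstrap for the Floer equation with $\beta_r=0$, essentially the same argument already used in \cref{lem:Linfty}, but this is the one place where more than a formal manipulation is needed.
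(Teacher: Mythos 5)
Your proposal is correct and follows essentially the same route as the paper: the energy identity for the $s$-dependent action, vanishing of the asymptotic action terms, and a bound on the $\beta_r'$-term by the Hofer norm (the paper exploits the sign of $\beta_r'$ on the two transition intervals rather than your midpoint-normalization trick, but both yield the stated constant). Your explicit attention to the boundary terms—needing uniform decay of $p$ together with control of $\del_t q$, obtainable by elliptic bootstrapping as in \cref{lem:Linfty}—is in fact more careful than the paper, which simply asserts that these limits vanish.
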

\begin{proof}
    By definition \[E(Z)=\int_{-\infty}^\infty\int_{\T^2}|\del_s Z(s,t)|^2\dV\,ds = -\int_{-\infty}^\infty\langle \grad\A_{\tilde{H}^{\rho,r}}(Z),\del_s Z\rangle\,ds,\] since $Z$ is a negative gradient line of $\A_{\tilde{H}^{\rho,r}}$. Writing this out further we get
    \begin{align*}
        E(Z) &= -\int_{-\infty}^\infty\left(d\A_{\tilde{H}^{\rho,r}}\right)_{Z_s}(\del_s Z)\,ds\\
        &=-\int_{-\infty}^\infty\frac{d}{ds}(\A_{\tilde{H}^{\rho,r}}(Z_s))\,ds+\int_{-\infty}^\infty\frac{\del\A_{\tilde{H}^{\rho,r}}}{\del s}(Z_s)\,ds\\
        &=-\lim_{s\to\infty}\A_{\tilde{H}^{\rho,r}}(Z_s)+\lim_{s\to-\infty}\A_{\tilde{H}^{\rho,r}}(Z_s)-\int_{-\infty}^\infty\int_{\T^2}\beta_r'(s)\tilde{h}^\rho_t(Z_s)\dV\,ds.
    \end{align*}
    Note that the first two terms on the last line vanish. Also, $\beta_r'$ vanishes everywhere outside $[-1,0]\cup[(2n+1)r,(2n+1)r+1]$ and is positive and negative respectively on those two intervals. Therefore,
    \begin{align*}
        E(Z) &=-\int_{-1}^0\int_{\T^2}\beta_r'(s)\tilde{h}^\rho_t(Z_s)\dV\,ds - \int_{(2n+1)r}^{(2n+1)r+1}\int_{\T^2}\beta_r'(s)\tilde{h}^\rho_t(Z_s)\dV\,ds\\
        &\leq 2\int_{\T^2}\left(\sup_{Z'\in T^*\T^{2n}}\tilde{h}^\rho_t(Z')-\inf_{Z'\in T^*\T^{2n}}\tilde{h}^\rho_t(Z')\right)\dV\\
        &=:2||\tilde{h}^\rho||_{\textup{Hofer}}.
    \end{align*}
\end{proof}

Now we want to relate the energy to the $L^2$-norm of the derivatives of $Z$. To this end we define the energy density
\[e_Z(s,t) = \frac{1}{2}|dZ(s,t)|^2 = \frac{1}{2}\left(|\del_s Z(s,t)|^2+|\del_1 Z(s,t)|^2+|\del_2 Z(s,t)|^2\right).\]
\begin{lemma}\label{lem:evsE}
    Let $K=[-\mu,\mu]\times\T^{2}\subseteq \R\times\T^{2}$. Then there exists a constant $C=C(K)$ such that for any $(r,Z)\in\M$ 
    \[\int_{K}e_Z(s,t)\dV\,ds \leq E(Z) + C(K).\]
\end{lemma}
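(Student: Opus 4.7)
I would split $\int_K |dZ|^2 = \int_K|\del_s Z|^2 + \int_K(|\del_1 Z|^2 + |\del_2 Z|^2)$, bounding the first term directly by $E(Z)$, and rewriting the second using the Floer equation together with a topological correction. The cornerstone is the pointwise identity
\[
|\del_1 Z|^2 + |\del_2 Z|^2 = |\delslash Z|^2 + 2\omega_I(\del_1 Z, \del_2 Z),
\]
where $\omega_I := g(\cdot, I\cdot)$ is the K\"ahler $2$-form on $T^*\T^{2n}$ associated to the complex structure $I$ and the flat hyperk\"ahler metric $g$. This is obtained by expanding $|J\del_1 Z + K\del_2 Z|^2$ and collapsing the cross term via $g(J\del_1 Z, K\del_2 Z) = -\omega_I(\del_1 Z, \del_2 Z)$, using $JK = I$ and the $g$-skew-adjointness of $J$.

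Since $\omega_I$ is closed (it has constant coefficients on $T^*\T^{2n}$) and $Z_s:\T^2 \to T^*\T^{2n}$ is nullhomotopic for every $s$, homotopy invariance yields $\int_{\T^2} Z_s^*\omega_I = 0$, and therefore $\int_K \omega_I(\del_1 Z, \del_2 Z)\,d\V\,ds = 0$. For the remaining $|\delslash Z|^2$ term I would use the Floer equation $\delslash Z = \nabla \wt{H}^{\rho,r}(Z) - \del_s Z$ to expand $|\delslash Z|^2 = |\del_s Z|^2 - 2\langle \nabla \wt{H}^{\rho,r}, \del_s Z\rangle + |\nabla \wt{H}^{\rho,r}|^2$, and handle the cross term by writing it as a total $s$-derivative:
\[
\langle \nabla \wt{H}^{\rho,r}, \del_s Z\rangle = \tfrac{d}{ds}\bigl[\wt{H}^{\rho,r}_{s,t}(Z_s)\bigr] - \tfrac{\del \wt{H}^{\rho,r}}{\del s}(s,t,Z_s).
\]
Integrating over $K$ reduces the first term on the right to boundary contributions at $s=\pm\mu$; these and the other pieces $\int_K|\nabla \wt{H}^{\rho,r}|^2$ and $\int_K\tfrac{\del \wt{H}^{\rho,r}}{\del s}$ are controlled purely by a $C^1$-bound of $\tilde{h}^\rho$ on the compact set $\T^{2n}\times B$ (recall $Z$ stays inside this set by \cref{lem:Floerbounds}), hence by a constant depending only on $K$.

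Combining these estimates yields $\int_K|\delslash Z|^2 \leq E(Z) + C(K)$ and hence
\[
\int_K|dZ|^2 = \int_K|\del_s Z|^2 + \int_K|\delslash Z|^2 + 2\int_K\omega_I(\del_1 Z, \del_2 Z) \leq 2E(Z) + C(K),
\]
which gives the claim after absorbing the factor $\tfrac12$ into the constant. The main obstacle is the cross term $\langle \nabla \wt{H}^{\rho,r}, \del_s Z\rangle$: a naive Cauchy-Schwarz bound would produce a term of order $E(Z)^{1/2}$ and spoil the linear dependence on $E(Z)$, so recognizing it as a total $s$-derivative is essential. The other structural input is that $\omega_I$ is a closed form, so the algebraic correction arising from the identity between $|\del_1 Z|^2+|\del_2 Z|^2$ and $|\delslash Z|^2$ integrates to zero on nullhomotopic slices.
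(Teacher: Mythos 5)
Your proposal is correct and follows essentially the same route as the paper: there, too, the topological cross term in $|\del_1 Z|^2+|\del_2 Z|^2$ is discarded using that the slices $Z_s$ are nullhomotopic (phrased as a partial integration in components $|2\del_t q|^2+|2\del_{\bar t}p|^2$ rather than via $\omega_I$), the Floer equation is then substituted to trade spatial derivatives for $\del_s Z$ and $\nabla\wt{H}^{\rho,r}$, and the Hamiltonian terms are bounded on the compact region $\T^{2n}\times B$ using $|p|^2\le\rho$ and the $C^1$-bound on $\tilde{h}^\rho$. Your explicit handling of the cross term $\langle\nabla\wt{H}^{\rho,r},\del_s Z\rangle$ as a total $s$-derivative (boundary terms plus a $\beta_r'$-term) is a slightly more careful bookkeeping than the paper's displayed chain of inequalities, and it is precisely what yields the stated coefficient $1$ in front of $E(Z)$.
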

\begin{proof}
    We calculate\footnote{The first equality follows from partial integration, using that $Z$ is nullhomotopic.}
    \begingroup
    \allowdisplaybreaks
    \begin{align*}
        \int_{K}e_Z(s,t)\dV\,ds &=\frac{1}{2}\int_K\left(|\del_s Z|^2+|2\del_t q|^2+|2\del_{\bar{t}} p|^2\right)\dV\,ds\\
        &=\frac{1}{2}\int_K\left(|\del_s Z|^2 +\left|\frac{\del \wt{H}^{\rho,r}}{\del p}-\del_s{p}\right|^2+\left|\del_s{q}-\frac{\del \wt{H}^{\rho,r}}{\del q}\right|^2\right)\dV\,ds\\
        &\leq E(Z) + \frac{1}{2}\int_K\left(\left|\frac{\del \wt{H}^{\rho,r}}{\del q}\right|^2+\left|\frac{\del \wt{H}^{\rho,r}}{\del p}\right|^2\right)\dV\,ds\\
        &\leq E(Z) + \frac{1}{2}\int_K\left(|p|^2+\left|\beta_r(s)\frac{\del \tilde{h}^\rho}{\del q}\right|^2+\left|\beta_r(s)\frac{\del\tilde{h}^\rho}{\del p}\right|^2\right)\dV\,ds.
    \end{align*}
    \endgroup
    Now, as in \cref{lem:Floerbounds} we have that $|p(s,t)|^2<\rho$ for all $(s,t)\in K$. Combined with the $C^1$-boundedness of $\tilde{h}^\rho$, we can estimate the right-hand side purely in terms of $E(Z)$ and the volume of $K$. This proves the lemma. 
\end{proof}

In order to get point-wise bounds on the derivatives, we want to use the Heinz trick (\cref{thm:Heinz}). We first need to show that the energy density satisfies an appropriate inequality.
\begin{lemma}\label{lem:densityineq}
    The energy density $e:=e_Z$ satisfies the inequality
    \begin{align}\label{eq:densityineq}
        (\del_s^2+\Delta)e\geq -c(1+e^{\frac{3}{2}}),
    \end{align}
    for some $c>0$.
\end{lemma}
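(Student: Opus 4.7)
The plan is a standard Bochner-type computation that exploits two facts: the target is flat, and the operator $\del_s + \delslash$ has a Clifford-like symbol.

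First, I would derive the pointwise identity
\[
LZ := (\del_s^2 + \Delta)Z = \del_s \nabla \tilde{H}^{\rho,r} - J\del_1 \nabla \tilde{H}^{\rho,r} - K\del_2 \nabla \tilde{H}^{\rho,r},
\]
by applying the first-order operator $\del_s - J\del_1 - K\del_2$ to both sides of the Floer equation \cref{eq:Floer}. Since $J$ and $K$ have constant entries on $T^*\T^{2n}$ and satisfy $J^2 = K^2 = -1$ and $JK + KJ = 0$, the mixed first-order terms on the left-hand side cancel and exactly $(\del_s^2+\Delta)Z$ emerges.

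Next, I would apply the usual Bochner identity
\[
(\del_s^2 + \Delta)e = |\nabla dZ|^2 + \sum_{\alpha \in \{s,1,2\}} \langle \del_\alpha Z,\, \del_\alpha LZ \rangle,
\]
where $|\nabla dZ|^2 := \sum_{\alpha,\beta} |\del_\alpha \del_\beta Z|^2 \geq 0$ serves as the non-negative cushion that will absorb unwanted second-derivative contributions. Expanding $\del_\alpha LZ$ via the chain rule on $\nabla \tilde H^{\rho,r}(s,t,Z(s,t))$ produces three families of terms: explicit derivatives in $(s,t)$ of $\nabla \tilde{H}^{\rho,r}$, bounded by $\|h\|_{C^3}$; terms of the form $\textup{Hess}_Z H\cdot \del_\alpha Z$ or $(\del_Z^3 H)(\del_\alpha Z, \del_\beta Z)$, pointwise bounded by $C(1 + |dZ|^2)$; and terms $\textup{Hess}_Z H \cdot \del_\beta\del_\alpha Z$ that are linear in $d^2 Z$ and bounded by $C|d^2 Z|$. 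The pointwise estimate $|p|^2 \leq \rho$ from \cref{lem:Floerbounds} together with the $C^3$-bound on $h$ keep all these coefficients uniform.

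Pairing with $\del_\alpha Z$ and applying Young's inequality absorbs the bad second-derivative terms into the cushion,
\[
\Big|\sum_\alpha \langle \del_\alpha Z,\, \textup{Hess}_Z H \cdot \del_\beta\del_\alpha Z\rangle \Big| \leq \tfrac{1}{2}|\nabla dZ|^2 + C|dZ|^2,
\]
leaving pointwise remainders of order $C(1 + |dZ| + |dZ|^2 + |dZ|^3)$. Since $|dZ|^k \leq 1 + |dZ|^3$ for $k \leq 2$ and $|dZ|^2 = 2e$, this is dominated by $c(1 + e^{3/2})$, giving the stated estimate. The main obstacle is the careful bookkeeping in the chain-rule expansion: verifying that every second-derivative-of-$Z$ term appears with a bounded tensor coefficient so that Young absorption works with a controlled constant, and checking that no higher-than-cubic growth in $|dZ|$ slips in from mixed products.
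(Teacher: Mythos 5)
Your proposal is correct and follows essentially the same route as the paper's proof in \cref{app:energy}: apply the conjugate operator $\del_s-\delslash$ to the Floer equation, use the Bochner-type identity for $e$, absorb the second-derivative terms into the non-negative square term (your Young absorption is the paper's completion of the square), and bound the remaining terms via the $C^3$-bound on $h$ and Young's inequality to reach $-c(1+e^{3/2})$. The only cosmetic difference is that the paper differentiates the Floer equation first, working with $\xi_i=\del_i Z$, while you apply the conjugate operator to $Z$ before differentiating; the resulting terms are identical.
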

\begin{proof}
    See \cref{app:energy}.
\end{proof}
\begin{theorem}[Heinz trick, see \cite{HNS}]\label{thm:Heinz}
    Let $M$ be an $(m+1)$-dimensional Riemannian manifold for $\frac{3}{2}<\frac{m+3}{m+1}$ and $e:M\to\R_{\geq0}$ a smooth function satisfying \cref{eq:densityineq}. Then for $K\subseteq M$ compact $\sup_K e$ is bounded by $\int_K e$.
\end{theorem}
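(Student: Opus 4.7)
The plan is to run the standard Heinz rescaling argument by contradiction. The hypothesis $\frac{3}{2} < \frac{m+3}{m+1}$ is equivalent to $m < 3$, which is exactly the subcriticality of the nonlinearity $e^{3/2}$ for the Laplacian on an $(m+1)$-dimensional manifold; this subcriticality will force the rescaled $L^1$-norm to vanish in the blow-up limit and close the contradiction.

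First, I would localize. Since the bound is an interior estimate on a compact $K$, I cover $K$ by finitely many normal-coordinate charts and reduce to an inequality on a Euclidean ball $B_2(0) \subset \R^{m+1}$, absorbing Christoffel-symbol corrections into the constant $c$ in \cref{eq:densityineq}. I then argue by contradiction: suppose there are smooth functions $e_k \geq 0$ satisfying \cref{eq:densityineq} uniformly in $k$, with $\int_{B_2} e_k \leq C_0$ but $\sup_{B_1} e_k \to \infty$. A Hofer-type point-picking trick (maximize $(1-|x|)^4 e_k(x)$) produces points $x_k$ with $M_k := e_k(x_k) \to \infty$ and a scale $r_k \to 0$ on which $e_k$ stays comparable to $M_k$.

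Next I would perform the Heinz rescaling. Set $\lambda_k := M_k^{-1/4}$ and define $\tilde{e}_k(y) := \lambda_k^{4}\, e_k(x_k + \lambda_k y)$ on balls that swell to $\R^{m+1}$. Then $\tilde{e}_k(0) = 1$, and substituting into \cref{eq:densityineq} gives
\[
(\del_s^2 + \Delta)\tilde{e}_k(y) \;\geq\; -c\,\lambda_k^6 \;-\; c\, \tilde{e}_k(y)^{3/2}.
\]
Changing variables, the $L^1$-norm on unit balls rescales as
\[
\int_{B_1(0)} \tilde{e}_k(y)\, dy \;=\; \lambda_k^{3-m}\int_{B_{\lambda_k}(x_k)} e_k \;\leq\; \lambda_k^{3-m}\, C_0,
\]
and the exponent $3-m$ is positive exactly by the dimension hypothesis, so this tends to zero.

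Finally I would close the argument with a standard mean value inequality for nonnegative subsolutions of $\Delta u + c u^{3/2} \geq 0$ on a Euclidean ball: whenever $\int_{B_1} u$ is sufficiently small compared with $c$, one has $u(0) \leq C \int_{B_1} u$, a bound that follows from Moser iteration (or the De Giorgi method). Applied to $\tilde{e}_k$ (with the harmless constant $c\lambda_k^6$ absorbed), this forces $1 = \tilde{e}_k(0) \to 0$, a contradiction. The main obstacle in the plan is precisely this last step: checking that the Moser iteration for the quasilinear inequality $\Delta u \geq -c u^{3/2}$ closes in dimension $m+1$. This is exactly where the subcriticality hypothesis $\frac{3}{2} < \frac{m+3}{m+1}$ enters decisively; once it is granted, the remainder is pure bookkeeping with scaling exponents.
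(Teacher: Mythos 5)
The paper offers no proof of this statement---it is imported from \cite{HNS}---so the relevant comparison is with the standard argument given there, and your proposal is essentially that argument: point-picking with a weight whose exponent $4$ matches the scaling, the rescaling $\tilde e_k(y)=\lambda_k^4\,e_k(x_k+\lambda_k y)$ with $\lambda_k=M_k^{-1/4}$ (exactly the choice that keeps the coefficient of $\tilde e_k^{3/2}$ fixed while the constant becomes $c\lambda_k^6$), the observation that the rescaled $L^1$-norm carries the factor $\lambda_k^{3-m}\to 0$ precisely because $\frac32<\frac{m+3}{m+1}$ is equivalent to $m<3$, and a mean value inequality to contradict $\tilde e_k(0)=1$. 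The final step does close: either use the bound $\tilde e_k\le 16$ on expanding balls furnished by the point-picking to place the forcing term $c\lambda_k^6+c\,\tilde e_k^{3/2}$ in $L^p$ with $p>\frac{m+1}{2}$ and norm tending to $0$, or write $c\,\tilde e_k^{3/2}=(c\,\tilde e_k^{1/2})\,\tilde e_k$ and note the potential is small in $L^2$ with $2>\frac{m+1}{2}$ (the dimension restriction again); either way the local maximum principle gives $1=\tilde e_k(0)\le C\bigl(\|\tilde e_k\|_{L^1(B_1)}+o(1)\bigr)\to 0$. The one phrase to repair is ``absorbing Christoffel-symbol corrections into the constant $c$'': the first-order terms of $\Delta_g$ in coordinates involve $de$ and cannot be dominated by $c(1+e^{3/2})$; instead run the iteration directly for the uniformly elliptic divergence-form operator in the chart, whose coefficients become Euclidean under the blow-up (and in the application in \cref{sec:Compactness} the domain $\R\times\T^2$ is flat, so nothing at all is needed).
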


Now we are ready to prove the desired compactness theorem.
\begin{theorem}\label{thm:compactness}
    The moduli space $\M$ is relatively compact in the $C^\infty_{\loc}$-topology.
\end{theorem}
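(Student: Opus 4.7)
The plan is to extract, from any sequence $(r_n, Z_n)\in\M$, a subsequence converging in $C^\infty_{loc}$ via a standard chain of a priori estimates: first a $C^0$-bound on $Z_n$; then, on compact subsets of $\R\times\T^2$, an $L^1$-bound on the energy density that the Heinz trick upgrades to a pointwise bound on $|dZ_n|$; then elliptic bootstrapping to $C^k_{loc}$-bounds for all $k$; and finally Arzelà–Ascoli.

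The $C^0$-bound comes essentially for free. The $q_n$-component lives in $\T^{2n}$ and is automatically bounded. For $p_n$, the proof of \cref{lem:Floerbounds} applies verbatim once one notes that on the region $\{|p_n|^2>\rho\}$ the cut-off $\chi_\rho(|p_n|^2)$ vanishes, so the Hamiltonian reduces to $\frac{1}{2}|p_n|^2$ independently of $\beta_{r_n}(s)$; the decay condition $(\star 3)$ then supplies the behavior at infinity needed for the maximum principle, yielding $|p_n(s,t)|^2\leq\rho$ uniformly in $n$.

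Next, for any compact $K=[-\mu,\mu]\times\T^2$, \cref{lem:Energybound} and \cref{lem:evsE} combine to give
\[
\int_K e_{Z_n}\dV\,ds \;\leq\; E(Z_n) + C(K) \;\leq\; 2\|\tilde h^\rho\|_{\textup{Hofer}} + C(K),
\]
uniformly in $n$. Since $\R\times\T^2$ has dimension $3$ (so $m=2$ in \cref{thm:Heinz}), the exponent hypothesis $\tfrac{3}{2}<\tfrac{5}{3}$ is satisfied, and combining this integral bound with the differential inequality of \cref{lem:densityineq} produces a uniform pointwise bound on $e_{Z_n}$, hence on $|dZ_n|$, on any slightly smaller compact subset.

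With $Z_n$ and $dZ_n$ uniformly controlled on compacta, I would bootstrap using the ellipticity of the first-order operator $\del_s+\delslash$ (which follows from $\delslash^2=-\Delta_t$ as recorded in \cref{sec:complexBridges}). Because the right-hand side $\nabla\tilde H^{\rho,r_n}(Z_n) = (0,p_n) + \beta_{r_n}(s)\nabla\tilde h^\rho_t(Z_n)$ is smooth in $Z_n$ with all derivatives bounded on the fixed compact image of $Z_n$, and since $\beta_{r_n}$ has $s$-derivatives bounded uniformly in $n$, differentiating the Floer equation and applying interior Sobolev estimates inductively yields uniform $C^k_{loc}$-bounds on $Z_n$ for every $k$. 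Arzelà–Ascoli then extracts a $C^\infty_{loc}$-convergent subsequence; the parameter $r_n\in[0,\infty)$, which may diverge, does not interfere with this compactness since on any fixed compact set $\beta_{r_n}$ and its derivatives remain uniformly bounded. The main obstacle I foresee is precisely this bootstrap step: although the individual ingredients are classical, checking that the Sobolev constants in the elliptic estimates can be chosen uniformly in $n$ — in particular independently of the unbounded parameter $r_n$ — requires some bookkeeping, after which the remainder of the argument is routine.
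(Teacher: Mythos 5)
Your proposal is correct and follows essentially the same chain of estimates as the paper: the $C^0$-bound from \cref{lem:Floerbounds}, the uniform bound on $\int_K e_{Z_n}$ from \cref{lem:Energybound,lem:evsE}, the pointwise gradient bound via \cref{lem:densityineq} and the Heinz trick (\cref{thm:Heinz}), and finally Arzel\`a--Ascoli together with elliptic regularity of the Floer operator to upgrade to $C^\infty_{\loc}$. The only difference is cosmetic ordering (the paper extracts a $C^0_{\loc}$-limit first and then bootstraps, while you bootstrap to uniform $C^k_{\loc}$-bounds before extracting the subsequence), and your remark that the cut-off makes \cref{lem:Floerbounds} insensitive to $\beta_{r_n}$ is a detail the paper uses implicitly.
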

\begin{proof}
    Let $(r_n,Z_n)\in\M$ for all $n\in\N$ and $K=[-\mu,\mu]\times\T^{2}\subseteq\R\times\T^2$. By \cref{lem:Floerbounds} the sequence $Z_n\vert_K$ is a bounded family of smooth functions on $K$. By combining \cref{lem:Energybound,lem:evsE} we find a uniform bound on $\int_K e_{Z_n}$. Then the Heinz trick (\cref{thm:Heinz}) combined with the density estimate in \cref{lem:densityineq} yields a uniform pointwise bound on $e_{Z_n}(s,t)$ for all $(s,t)\in K$. By the mean-value theorem it follows that the sequence $(Z_n)$ is uniformly equicontinuous. The theorem of Arzelà-Ascoli then yields a converging subsequence in the $C^0_{\loc}$-topology. Finally, the ellipticity of the Floer operator yields convergence of the subsequence in $C^\infty_{\loc}$.
\end{proof}

\appendix
\section{Proof of \cref{prop:metricsmooth}}\label{app:metric}
Let us start by proving the corresponding statement on linear space. 
\begin{lemma}
    Let $(V^{4n},I,\omega_1\otimes\del1+\omega_2\otimes\del_2)$ be a complex-regularized polysymplectic vector space, meaning that $\omega_1$ and $\omega_2$ are two linear symplectic forms on $V$ related by $\omega_2=-\omega_1(\cdot,I\cdot)$. There exists an inner product $g$ on $V$, such that 
    \begin{align*}
        \omega_1&=g(\cdot,J\cdot)\\
        \omega_2&=g(\cdot,K\cdot)
    \end{align*}
    for two anti-commuting linear complex structures $J,K$ satisfying $IJ=K$. 
\end{lemma}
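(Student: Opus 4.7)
The plan is to mimic the classical construction of a compatible triple from a symplectic form, but to build in the given complex structure $I$ from the start by choosing the auxiliary data $I$-equivariantly. First, pick any inner product $g_0$ on $V$ that makes $I$ skew-adjoint, equivalently, a Hermitian inner product for $(V,I)$; such a $g_0$ exists by averaging an arbitrary inner product over the two-element group $\{\mathrm{Id},I\}$ (or equivalently, by taking the real part of any Hermitian form). Define $A\in\mathrm{End}(V)$ by $\omega_1(X,Y) = g_0(X, AY)$. Antisymmetry and non-degeneracy of $\omega_1$ make $A$ invertible and $g_0$-skew-adjoint.

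The key preliminary observation is that $A$ must anti-commute with $I$. Indeed, the antisymmetry of $\omega_2 = -\omega_1(\cdot, I\cdot)$ is equivalent to the identity $\omega_1(X, IY) = \omega_1(IX, Y)$ for all $X, Y$, and upon substituting the definition of $A$ and using the $g_0$-skew-adjointness of $I$ this becomes $AI + IA = 0$. Consequently $A^2$ commutes with $I$, and so does the positive symmetric operator $|A| := \sqrt{-A^2} = \sqrt{A^*A}$, which is well-defined because $-A^2 = A^*A$ is positive definite.

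Now apply polar decomposition: set $J := A|A|^{-1}$. Using that $A$ and $|A|$ commute, one computes $J^2 = A^2|A|^{-2} = -\mathrm{Id}$, so $J$ is an almost complex structure; and $J$ anti-commutes with $I$ since $A$ does while $|A|^{-1}$ commutes with $I$. Setting $K := IJ$, the relations $IJ = -JI$ and $I^2 = J^2 = -\mathrm{Id}$ immediately yield $K^2 = -\mathrm{Id}$ and $KJ + JK = 0$, so $(I,J,K)$ satisfy the quaternionic relations required. Finally, define
\[ g(X,Y) := g_0(X, |A|Y), \]
which is an inner product because $|A|$ is $g_0$-symmetric and positive. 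The identity $|A|J = A$ then gives $g(X,JY) = g_0(X,AY) = \omega_1(X,Y)$, and the identity $\omega_2 = g(\cdot,K\cdot)$ follows by combining $\omega_2 = -\omega_1(\cdot, I\cdot)$ with the anti-commutation $AI = -IA$ and the commutation of $|A|$ with $I$.

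I expect the heart of the argument to lie in the anti-commutation step $AI + IA = 0$: this is where the polysymplectic compatibility condition $\omega_2 = -\omega_1(\cdot,I\cdot)$ actually enters, and it is what makes the whole polar-decomposition construction respect the complex structure $I$. Once this is in hand, the remaining verifications are purely formal manipulations of operators that pairwise commute or anti-commute in the expected way.
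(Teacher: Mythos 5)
Your proposal is correct and follows essentially the same route as the paper: fix an $I$-invariant auxiliary inner product, represent $\omega_1$ by a skew-adjoint operator $A$, derive the anti-commutation $AI=-IA$ from the compatibility $\omega_2=-\omega_1(\cdot,I\cdot)$, and use the polar decomposition $A=|A|J$ to produce $J$, $K=IJ$ and the metric $g=g_0(\cdot,|A|\cdot)$. The only cosmetic difference is that the paper also polar-decomposes the operator representing $\omega_2$ (showing it has the same positive part $B=|A|$) and deduces $K=IJ$ at the end, whereas you define $K:=IJ$ directly and verify $\omega_2=g(\cdot,K\cdot)$ by hand; the key step and all essential computations coincide.
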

\begin{proof}
    First we fix an auxiliary inner product $(\cdot,\cdot)$ on $V$ for which $I$ is an isometry and agree that transposes of linear maps on $V$ are taken with respect to this product. In particular $I^T=-I$. By non-degeneracy of $\omega_1$ and $\omega_2$, we find skew-symmetric isomorphisms $A_1,A_2:V\to V$ such that 
    \begin{align*}
        \omega_1&=(\cdot,A_1\cdot)\\
        \omega_2&=(\cdot,A_2\cdot).
    \end{align*}
    Note that for $X,Y\in V$
    \[(X,A_2Y)=\omega_2(X,Y)=-\omega_1(X,IY)=-(X,A_1IY),\]
    so that $A_2=-A_1I$. Also
    \begin{align*}
        (X,A_1IY) &= -\omega_2(X,Y)=\omega_2(Y,X)\\
        &=-\omega_1(Y,IX)=\omega_1(IX,Y)\\
        &=(IX,A_1Y)\\
        &=-(X,IA_1Y),
    \end{align*}
    which yields $A_1I=-IA_1$.

    To write down the polar decomposition of $A_1$, we define the positive-definite symmetric matrix $B=\sqrt{A_1A_1^T}$ and define $J$ by $A_1=BJ$. Note that 
    \[A_1A_1^T=(A_2I)(A_2I)^T=A_2II^TA_2^T=A_2A_2^T.\]
    Therefore the polar decomposition of $A_2$ is $A_2=BK$. Then both $A_1$ and $A_2$ commute with $B$ and therefore both $J$ and $K$ commute with $B$ as well. To prove that $J$ and $K$ are complex structures, we compute for $i=1,2$
    \[(B^{-1}A_i)^T=A_i^TB^{-1}=-A_iB^{-1}=-B^{-1}A_i\]
    and
    \[(B^{-1}A_i)(B^{-1}A_i)^T=B^{-1}A_iA_i^TB^{-1}=\Id_V.\]
    This shows that both $J$ and $K$ are skew-symmetric isometries of $(\cdot,\cdot)$, so that indeed
    \begin{align*}
        J^2=-JJ^T=-\Id_V && K^2=-KK^T=-\Id.
    \end{align*}
    Also, since $A_2=-A_1I$, we get 
    \[JI=-B^{-1}A_2=-K\]
    and 
    \[-\Id=K^2=JIJI,\]
    so that $K=-JI=IJ$. It follows that indeed $J$ and $K$ are anti-commuting linear complex structures. The metric $g$ may be defined as $g(X,Y)=(X,BY)$.
\end{proof}

\begin{proof}[Proof of \cref{prop:metricsmooth}]
    We may globally fix an auxiliary metric $(\cdot,\cdot)$ on $W$ for which $I$ is an isometry. The pointwise construction in the proof above is canonical after this choice and carries through smoothly, just like it does for the corresponding statement on symplectic manifolds.
\end{proof}

\section{Proof of \cref{lem:densityineq}}\label{app:energy}
For any $(r,Z)$ in $\M$, it holds that $Z$ satisfies the Floer equation
\begin{align}\label{eq:FloerJdel}
    \del_s Z +\delslash Z = \nabla \tilde{H}^{\rho,r}(Z).
\end{align}
For notational convenience we denote $s$ by $t_0$ and $\xi_i:=\del_iZ$ for $i=0,1,2$. From the identity 
\[(\del_s-\delslash)(\del_s+\delslash)=\L:=\sum_{i=0}^2\del_i^2\]
we see that the $\xi_i$ satisfy the equation
\begin{align}\label{eq:xiLaplace}
    \L\xi_i = (\del_s-\delslash)(\sigma_Z\xi_i),
\end{align}
where $\sigma_Z$ is the Hessian of $\tilde{H}^{\rho,r}$ at $Z$.

Note that in this notation the energy density is given by 
\[e(s,t) = \frac{1}{2}\sum_{i=0}^2|\xi_i(s,t)|^2.\]
We calculate
    \begin{align*}
        \L e &= \sum_{i,j=0}^2|\del_j\xi_i|^2 + \sum_{i=0}^2\langle \xi_i,\L \xi_i\rangle\\
        &=\sum_{i,j=0}^2|\del_j\xi_i|^2 + \sum_{i=0}^2\langle \xi_i,(\del_s-\delslash)(\sigma_Z\xi_i)\rangle,
    \end{align*}
where we have used \cref{eq:xiLaplace} in the second equality. Now note that the chain rule gives that
    \begin{align*}
        \del_j(\sigma_Z \xi_i)=T\sigma_Z\cdot \xi_j\cdot \xi_i+\sigma_Z\del_j \xi_i.
    \end{align*}
Denote $J_0=-\Id$, $J_1=J$ and $J_2=K$. Then
    \begin{align*}
        \L e&= \sum_{i,j=0}^2\left(|\del_j\xi_i|^2 -\langle (J_j \sigma_Z)^T\xi_i,\del_j\xi_i\rangle-\langle\xi_i,J_jT\sigma_Z\cdot\xi_j\cdot\xi_i\rangle\right)\\
        &=\sum_{i,j=0}^2\left(\left|\del_j\xi_i-\frac{1}{2}(J_j\sigma_Z)^T\xi_i\right|^2-\frac{1}{4}\left|(J_j\sigma_Z)^T\xi_i\right|^2-\langle\xi_i,J_jT\sigma_Z\cdot\xi_j\cdot\xi_i\rangle\right).
    \end{align*}
Note that since $\tilde{h}^\rho$ is $C^3$-bounded, we have that both $\sigma_Z$ and $T\sigma_Z$ are bounded. Thus
\[
    \left|(J_j\sigma_Z)^T\xi_i\right|^2\leq c_1|\xi_i|^2\leq c_2 e
\]
and
\[
    \left|\langle\xi_i,J_jT\sigma_Z\cdot\xi_j\cdot\xi_i\rangle\right|\leq c_3|\xi_i|^2|\xi_j|\leq c_4 e^{\frac{3}{2}}.
\]
Combined we get that
\[
\L e\geq -c_5\left(e+e^{\frac{3}{2}}\right).
\]
Finally, Young's inequality with $p=3$, $q=\frac{3}{2}$ gives that
    \[e\leq \frac{1}{3}+\frac{2}{3}e^{\frac{3}{2}},\]
    so that
    \[\L e\geq -c(1+e^{\frac{3}{2}}).\]
\qed

\bibliography{mybib}{}
\bibliographystyle{alpha}

\end{document}